\renewcommand{\mathbb}{\mathbf}
\newcolumntype{C}{>{$}c<{$}} 
\newcommand{\red}{\operatorname{red}}
\newcommand{\gK}{{\mathfrak{K}}}
\newcommand{\Akfree}{{A^{\operatorname{k-free}}}}
\newcommand{\Bdist}{{B^{\operatorname{dist}}}}
\newcommand{\Bkfree}{{B^{\operatorname{k-free}}}}
\newcommand{\dist}{{\operatorname{dist}}}
\newcommand{\kfree}{{{\operatorname{k-free}}}}
\newcommand{\Rep}{\operatorname{Rep}}
\newcommand{\Isom}{\operatorname{Isom}}
\newcommand{\BT}{\operatorname{BT}}
\newcommand{\gP}{\mathfrak{P}}
 \newcommand{\sigmabar   }{\overline{\sigma}}
\theoremstyle{plain}
\newtheorem{theorem}[equation]{Theorem}
\newtheorem{thm}[equation]{Theorem}
\newtheorem{prop}[equation]{Proposition}
\newtheorem{lem}[equation]{Lemma}
\newtheorem{lemma}[equation]{Lemma}
\newtheorem{cor}[equation]{Corollary}
\theoremstyle{definition}
\newtheorem{remark}[equation]{Remark}
\newtheorem{example}[equation]{Example}
\newtheorem{defn}[equation]{Definition}
\numberwithin{equation}{section}
\numberwithin{figure}{subsection}
\newif\iffinalrun
  \newcommand{\need}[1]{}
  \newcommand{\mar}[1]{}
  \newcommand{\need}[1]{{\tiny *** #1}}
  \newcommand{\mar}[1]{\marginpar{\raggedright\tiny fixme #1}}
\newcommand{\C}{\CC}
\newcommand{\F}{\FF}
\newcommand{\Q}{\QQ}
\newcommand{\Z}{\ZZ}
\newcommand{\CC}{{\mathbb C}}
\newcommand{\FF}{{\mathbb F}}
\newcommand{\GG}{{\mathbb G}}
\newcommand{\QQ}{{\mathbb Q}}
\newcommand{\ZZ}{{\mathbb Z}}
\newcommand{\bZ}{\ensuremath{\mathbf{Z}}}
\newcommand{\cC}{{\mathcal C}}
\newcommand{\cE}{{\mathcal E}}
\newcommand{\cK}{{\mathcal K}}
\newcommand{\cL}{{\mathcal L}}
\newcommand{\cM}{{\mathcal M}}
\newcommand{\cN}{{\mathcal N}}
\newcommand{\cO}{{\mathcal O}}
\newcommand{\cP}{{\mathcal P}}
\newcommand{\cR}{{\mathcal R}}
\newcommand{\cX}{{\mathcal X}}
\newcommand{\cY}{{\mathcal Y}}
\newcommand{\cZ}{{\mathcal Z}}
\newcommand{\fS}{\mathfrak{S}}
\newcommand{\gE}{\mathfrak{E}}
\newcommand{\gM}{\mathfrak{M}}
\newcommand{\gN}{\mathfrak{N}}
\newcommand{\fM}{\mathfrak{M}}
\newcommand{\gS}{\mathfrak{S}}
\newcommand{\Fbar}{\overline{\F}}
\newcommand{\Qbar}{\overline{\Q}}
\newcommand{\Fp}{\F_p}
\newcommand{\Fpbar}{\Fbar_p}
\newcommand{\Zp}{\Z_p}
\newcommand{\Qp}{\Q_p}
\newcommand{\Qpbar}{\Qbar_p}
\DeclareMathOperator{\Ext}{Ext}
\DeclareMathOperator{\kExt}{ker-Ext}
\DeclareMathOperator{\Gal}{Gal}
\DeclareMathOperator{\GL}{GL}
\DeclareMathOperator{\Hom}{Hom}
\DeclareMathOperator{\Spec}{Spec}
\DeclareMathOperator{\Spf}{Spf}
\DeclareMathOperator{\Sym}{Sym}
\newcommand{\nr}{\mathrm{nr}}
\newcommand{\sep}{\mathrm{sep}}
\newcommand{\St}{\mathrm{St}}
\newcommand{\rhobar}{\overline{\rho}}
\newcommand{\into}{\hookrightarrow}
\newcommand{\longnto}[1]{\buildrel#1\over\longrightarrow}
\newcommand{\Gm}{\GG_m}
\newcommand{\dd}{\mathrm{dd}}
\newcommand{\hatEnr}{\widehat{\cE^{\nr}}}
\newcommand{\hatOEnr}{\cO_{\hatEnr}}
\newcommand{\eb}{e}
\newcommand{\fb}{f}
\newcommand{\txI}{\mathrm{I}}
\newcommand{\txII}{\mathrm{II}}
\DeclareMathOperator{\Ad}{Ad}
\DeclareMathOperator{\crys}{crys}
\begin{document}
\selectlanguage{english}
\title{Irregular loci in the Emerton-Gee stack for $\GL_2$}
\author[Bellovin]{Rebecca Bellovin}
\author[Borade]{Neelima Borade}
\author[Hilado]{Anton Hilado}
\author[Kansal]{Kalyani Kansal}
\author[Lee]{Heejong Lee}
\author[Levin]{Brandon Levin}
\author[Savitt]{David Savitt}
\author[Wiersema]{Hanneke Wiersema}

\begin{abstract}
    Let $K/\Qp$ be unramified. Inside the Emerton--Gee stack $\cX_2$, one can consider the locus of two-dimensional mod $p$  representations  of $\mathrm{Gal}(\overline{K}/K)$ having a crystalline lift with specified Hodge--Tate weights. We study the case where the Hodge--Tate weights are irregular, which is an analogue for Galois representations of the partial weight one condition for Hilbert modular forms. We prove that if the gap between each pair of  weights is bounded by $p$ (the irregular analogue of a Serre weight), then this locus is irreducible. We also establish various inclusion relations between these  loci.  
\end{abstract}

\maketitle

\section{Introduction}

\subsection{Emerton--Gee and CEGS stacks} 
Let $K$ be a finite extension of the $p$-adic numbers $\Qp$, with residue field $k$ and absolute Galois group $G_K$. Emerton and Gee \cite{EGmoduli} have pioneered the study of certain moduli stacks of $d$-dimensional representations of $G_K$. More precisely, the Emerton--Gee stack $\cX_d$ is a formal stack over $\Spf(\Zp)$ whose $A$-valued points, for each $p$-adically complete $\Zp$-algebra $A$, are the rank $d$ \'etale $(\varphi,\Gamma)$-modules with $A$-coefficients;\ in particular, the $\F$-points of $\cX_d$ for any finite extension $\F/\Fp$ are interpretable as Galois representations $\rhobar : G_K \to \GL_d(\F)$.  The book \cite{EGmoduli} gives several important applications of this construction, including the first proof that any such~$\rhobar$ has a lift to characteristic zero, and still the only proof that any such $\rhobar$ has a crystalline lift.

It is expected that the Emerton--Gee stacks will play a central role in a categorification of the $p$-adic Langlands correspondence. This expectation is described at length in the survey article \cite{EGHcategorical}. As a first indication that the stacks $\cX_d$ have some bearing on the representation theory of $p$-adic groups, Emerton and Gee establish a  bijection between the  irreducible components of the underlying reduced substack $\cX_{d,\red}$ of $\cX_d$ in the sense of \cite[Def~3.27]{EmertonFormal}, and the irreducible $\Fpbar$-representations of $\GL_d(k)$ (which are traditionally called \emph{Serre weights}). Let $\cX_{d,\red}^\sigma$ denote the component of $\cX_{d,\red}$ corresponding to the Serre weight \textbf{}$\sigma$. 

The bijection of \cite{EGmoduli} between Serre weights $\sigma$ and components $\cX^{\sigma}_{d,\red}$ is characterized by a description of a dense set of finite type points on each component of $\cX_{d,\red}$. In the rank $d=2$ case, however, more is known:\ namely there is a complete description of all  finite type points on each component of $\cX_{d,\red}$. Recall that to each $\rhobar : G_K \to \GL_2(\F)$ the work of \cite{bdj}  associates a set $W(\rhobar)$ of Serre weights. In fact this set has several descriptions, which are known to be equivalent due to the work of a number of authors \cite{blggU2,gls12,gls13,geekisin}. One such description is in terms of the existence of crystalline lifts having certain \emph{regular} Hodge--Tate weights.
Then we have the following;\ here we recall that a Serre weight is said to be \emph{Steinberg} if it is isomorphic to a twist by a character of the representation $\St := \otimes_{\kappa : k \into \Fpbar} (\Sym^{p-1} k^2) \otimes_{k,\kappa} \Fpbar$, the tensor product taken over all the embeddings of $k$ into $\Fpbar$.

\begin{theorem}[{\cite[Thm.~1.2]{cegsA}, \cite[Thm.~8.6.2]{EGmoduli}}]\label{thm:cegs-main-X} Suppose that $p >2$.
\begin{enumerate}
    \item If the weight $\sigma$ is non-Steinberg, then $\sigma \in W(\rhobar)$ if and only if $\rhobar$ lies on the component $\cX^{\sigma}_{2,\red}$.

    \item If instead $\sigma = \chi \otimes \St$ is Steinberg, then $\chi\otimes \St \in W(\rhobar)$ if and only if $\rhobar$ lies on the union $\cX^{\chi}_{2,\red} \cup \cX^{\chi \otimes \St}_{2,\red}$.
\end{enumerate}
\end{theorem}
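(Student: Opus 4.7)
The plan is to combine two inputs. First, the characterization of $W(\rhobar)$ proved in \cite{gls12,gls13,blggU2,geekisin}, which says that for each Serre weight $\sigma$ there is a tame inertial type $\tau(\sigma)$ (unique up to a small ambiguity in the Steinberg case) such that $\sigma \in W(\rhobar)$ if and only if $\rhobar$ admits a potentially Barsotti--Tate lift of type $\tau(\sigma)$. Second, the labeling of components from \cite{EGmoduli}, in which $\cX^\sigma_{2,\red}$ is characterized by a dense substack of $\rhobar$ admitting a crystalline lift of a specific Hodge--Tate shape. The content of the theorem is to upgrade this density statement to a pointwise one for all finite type $\rhobar$.

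The main geometric tool is the CEGS moduli stack $\cZ^\tau$ of rank-$2$ Breuil--Kisin modules of height $\le 1$ with tame descent data of type $\tau$. There is a proper morphism $\cZ^\tau \to \cX_2$ whose finite type points, by Kisin's theorem, correspond exactly to $\rhobar$ admitting a potentially Barsotti--Tate lift of type $\tau$. The central technical input is a description of the irreducible components of $\cZ^\tau$ as being in bijection with $\JH(\sigmabar(\tau))$, the Jordan--H\"older factors of the mod $p$ reduction of the associated type, together with a matching of each such component with the Emerton--Gee component $\cX^\sigma_{2,\red}$ indexed by the corresponding constituent.

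Granting this matching, part (1) follows cleanly:\ if $\sigma \in W(\rhobar)$ with $\sigma$ non-Steinberg, then $\rhobar$ lies in the image of $\cZ^{\tau(\sigma)}$ through the unique component labeled by $\sigma$, placing it on $\cX^\sigma_{2,\red}$;\ conversely, any finite type point of $\cX^\sigma_{2,\red}$ lifts to $\cZ^{\tau(\sigma)}$ by surjectivity of the component map, yielding the required crystalline lift and hence $\sigma \in W(\rhobar)$. The Steinberg case of (2) is handled by observing that two distinct principal series tame types both produce $\chi \otimes \St$ as a Jordan--H\"older constituent, and that their $\chi \otimes \St$-labeled components in the corresponding $\cZ^\tau$ map to the two different Emerton--Gee components $\cX^{\chi \otimes \St}_{2,\red}$ and $\cX^\chi_{2,\red}$ respectively, producing the union.

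The main obstacle is the component-matching step for $\cZ^\tau$:\ one must prove that $\cZ^\tau$ is equidimensional of the expected dimension, that its irreducible components are canonically indexed by $\JH(\sigmabar(\tau))$, and, most subtly, that this indexing is compatible with the Emerton--Gee labeling coming from dense crystalline loci. This requires explicit local models for $\cZ^\tau$ in terms of shapes of Breuil--Kisin modules with descent data, a careful analysis of their generic and boundary behavior, and the identification of crystalline Hodge--Tate weights with the matching Jordan--H\"older factors. The Steinberg asymmetry---two components of $\cZ^\tau$ realizing the same Serre weight but mapping to distinct components of $\cX_{2,\red}$---is where the argument is most delicate and where the explicit combinatorics of the tame type construction must be pushed through.
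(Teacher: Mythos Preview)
This theorem is not proved in the paper; it is quoted from \cite{cegsA} and \cite{EGmoduli}, and the paper only sketches the strategy: first establish the analogue on the CEGS side (Theorem~\ref{thm:cegs-main-Z}), then transfer from $\cZ^{\dd}$ to $\cX_2$ using that the two stacks have the same versal deformation rings.

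Your outline is in the right spirit but contains several inaccuracies relative to the actual argument. First, $\cZ^\tau$ is not a stack of Breuil--Kisin modules; it is the stack of \'etale $\varphi$-modules with descent data (more precisely, the scheme-theoretic image of the Breuil--Kisin stack $\cC^{\tau,\BT}$ inside $\cR^{\dd}$). Second, the transfer from $\cZ$ to $\cX$ is not via a proper morphism $\cZ^\tau \to \cX_2$; in \cite{cegsA} it proceeds by comparing versal deformation rings (and in the present paper, by the isomorphism $\cZ^\tau \cong \cX^{\tau,\BT}$ of Theorem~\ref{thm:EG-CEGS-isom}).

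The more serious gap is in your treatment of part (2). The irreducible components of $\cZ^{\dd,1}$ are in bijection with \emph{non-Steinberg} Serre weights only (see Theorem~\ref{lem: C 1 and Z 1 are the underlying reduced substacks}(3)); there is no ``$\chi\otimes\St$-labeled component'' in any $\cZ^\tau$. The finite type points of $\cZ^{\dd}$ are precisely the representations that are not twists of tr\`es ramifi\'ee extensions, so the Steinberg components $\cX^{\chi\otimes\St}_{2,\red}$ are invisible on the $\cZ$ side. Part (2) cannot be deduced by the mechanism you describe; rather, one uses that $\chi\otimes\St \in W(\rhobar)$ is equivalent (via \cite{gls12}) to the existence of a crystalline lift with all Hodge--Tate gaps equal to $p$, and one analyzes this locus directly on the $\cX$ side, where it decomposes as the union of the two components $\cX^{\chi}_{2,\red}$ and $\cX^{\chi\otimes\St}_{2,\red}$.
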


In fact even more is true:\ the cycles $\cX^{\sigma}_{2,\red}$ (in the non-Steinberg case) and $\cX^{\chi}_{2,\red} + \cX^{\chi \otimes \St}_{2,\red}$ (in the Steinberg case) form the cycles in a ``universal'' geometric version of the Breuil--M\'ezard conjecture for potentially Barsotti--Tate representations. 

The proof of Theorem~\ref{thm:cegs-main-X} makes  essential use of another stack $\cZ^\dd$, first defined in \cite{cegsB}, whose $A$-valued points are rank $2$ \'etale $\varphi$-modules with tamely ramified descent data, and whose $\F$-points are interpretable as Galois representations $\rhobar : G_K \to \GL_d(\F)$ having tamely potentially Barsotti--Tate lifts, or equivalently Galois representations that are not tr\`es ramifi\'ee up to twist.

The irreducible components $\overline{\cZ}(\sigma)$ of the underlying reduced substack of $\cZ^\dd$ are in bijection with the \emph{non-Steinberg} Serre weights. To prove Theorem~\ref{thm:cegs-main-X} one first establishes the following analogue of Theorem~\ref{thm:cegs-main-X}(1) for the components  $\overline{\cZ}(\sigma)$.

\begin{theorem}[{\cite[Thm~1.4(1)]{cegsA}}]\label{thm:cegs-main-Z} Suppose that $p >2$. If the weight $\sigma$ is non-Steinberg, then $\sigma \in W(\rhobar)$ if and only if $\rhobar$ lies on the component $\overline{\cZ}(\sigma)$.
\end{theorem}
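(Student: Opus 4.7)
The plan is to prove $\sigma \in W(\rhobar) \Leftrightarrow \rhobar \in \overline{\cZ}(\sigma)$ by combining a moduli-theoretic characterization of $W(\rhobar)$ via tame potentially Barsotti--Tate lifts with a geometric Breuil--M\'ezard style decomposition of the tame-type strata of $\cZ^{\dd}_{\red}$. By the equivalent descriptions of $W(\rhobar)$ established in \cite{blggU2,gls12,gls13,geekisin}, membership $\sigma \in W(\rhobar)$ is controlled by the existence of potentially BT lifts of prescribed tame inertial type; for each non-Steinberg $\sigma$ one fixes a tame inertial type $\tau_\sigma$ with $\sigma$ appearing among the Jordan--H\"older factors of $\overline{\sigma(\tau_\sigma)}$ (and, in cases where $\sigma$ is not cleanly isolated this way, a whole family of such types).

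For the ``if'' direction, I would work on a dense open substack of $\overline{\cZ}(\sigma)$ cut out using the explicit Kisin-module-with-descent-data parametrization of the irreducible components developed in \cite{cegsB}. On this open, one writes down a universal potentially BT lift of type $\tau_\sigma$, certifying $\sigma \in W(\rhobar)$ generically; since the condition ``$\rhobar$ lies in $\cZ(\tau_\sigma)$'' is Zariski closed in $\cZ^\dd$, and $\overline{\cZ}(\sigma)$ is irreducible, this inclusion propagates to the entire component. For the ``only if'' direction, the central input is the decomposition
\[
\overline{\cZ}(\tau) \;=\; \bigcup_{\sigma' \in \JH(\overline{\sigma(\tau)}),\ \sigma'\text{ non-Steinberg}} \overline{\cZ}(\sigma')
\]
valid for any tame inertial type $\tau$: applied to $\tau = \tau_\sigma$, this places $\rhobar$ on $\overline{\cZ}(\sigma')$ for some JH factor $\sigma'$, and one pins down $\sigma' = \sigma$ by running the argument over a sufficiently rich family of types whose JH patterns together isolate $\sigma$.

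The main obstacle is establishing this decomposition of $\overline{\cZ}(\tau)$ into JH-indexed components. That requires matching an upper bound $\dim \cZ(\tau) \le [K:\Qp]$ against a lower bound obtained by exhibiting each non-Steinberg JH factor of $\overline{\sigma(\tau)}$ as the label of a component of $\cZ(\tau)$, and is the technical heart of the geometric Breuil--M\'ezard theorem proved in \cite{cegsA,cegsB}. The non-Steinberg hypothesis is essential, since the Steinberg JH factor does not correspond to an irreducible component of $\cZ^\dd_\red$ — precisely what forces the bifurcated form of Theorem~\ref{thm:cegs-main-X} versus the cleaner statement of Theorem~\ref{thm:cegs-main-Z} here.
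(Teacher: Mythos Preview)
This theorem is cited from \cite{cegsA} and is not given a direct proof in the present paper at the point where it is stated; it appears in the introduction as background. That said, the paper does furnish a new, purely local proof as a byproduct of its main constructions: see Remark~\ref{rem:converse-remark-regular}. There one combines Theorem~\ref{prop:modified-basis-of-invariants}, which characterizes \emph{all} finite-type points of $\cZ^\tau(J)$ via an explicit condition on bases of the underlying \'etale $\varphi$-module, with the GLS criterion Theorem~\ref{thm:gls-irregular}, which translates that same condition into the existence of a crystalline lift of Hodge type $r(\tau,J)$. This identifies the $\F'$-points of $\cZ^\tau(J)$ with the representations having $\sigmabar(\tau)_J \in W(\rhobar)$, directly.

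Your proposal is closer in spirit to the original \cite{cegsA} argument, but as written it has a genuine gap in the ``if'' direction. Producing a potentially Barsotti--Tate lift of type $\tau_\sigma$ does \emph{not} certify $\sigma \in W(\rhobar)$; it certifies only that \emph{some} Jordan--H\"older factor of $\overline{\sigma(\tau_\sigma)}$ lies in $W(\rhobar)$. The closedness you then invoke is that of $\cZ(\tau_\sigma)$, which propagates the inclusion $\overline{\cZ}(\sigma) \subset \cZ(\tau_\sigma)$ but again gives only ``some JH factor'', not $\sigma$ itself, on the full component. To make your strategy work you would need the closed condition ``$\rhobar$ has a crystalline lift of the specific Hodge type attached to $\sigma$'', but at this point in the logical development that closed locus has not yet been identified with $\overline{\cZ}(\sigma)$. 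The actual proof in \cite{cegsA} resolves this via Taylor--Wiles patching (a global input absent from your sketch), which is precisely what the paper's new local argument in Remark~\ref{rem:converse-remark-regular} circumvents: rather than working only on a dense open and appealing to closedness, the shape analysis of Section~\ref{sec:shape} describes every finite-type point of each component, and GLS then converts that description into the Serre-weight condition without any global detour.
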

The authors then transfer this result from $\cZ^\dd$ to $\cX_2$ using the fact that these stacks have the same versal deformation rings.

One of the main results of this paper is that certain closed substacks of $\cX_2$ and $\cZ^{\dd}$ are in fact isomorphic (we will be more precise in a moment about exactly which substacks). Although this is wholly expected, it is quite useful, for the following reason. The existence of substacks of mod $p$ Galois representations satisfying various $p$-adic Hodge theoretic conditions is known on the side of the Emerton--Gee stacks (we will denote these stacks by the symbol $\cX$ with various decorations), \ but not on the side of the CEGS stacks (which are the stacks here denoted $\cZ$ with various decorations). On the other hand,  calculations are generally  easier on the $\cZ$ side than on the $\cX$ side. Thanks to the isomorphism between the two sides, we can pass the existence of $p$-adic Hodge theoretic loci from the $\cX$ side to the $\cZ$ side, study their properties on the $\cZ$ side, and then transfer these results back to the $\cX$ side, which is the side of greater intrinsic interest. 

\subsection{Irregular loci}\label{sec:comparison-of-stacks}

The other main results of our paper are applications of the above method to the closed substacks of $\cX_{2,\red}$ of mod $p$ Galois representations having  certain \emph{irregular} Hodge--Tate weights;\ these substacks have positive codimension in $\cX_{2,\red}$. The condition of being irregular is the analogue for Galois representations of the partial weight one condition for Hilbert modular forms.

To discuss these results, we assume for the remainder of the paper that the extension $K/\Qp$ is unramified and write $f = [K:\Qp]$;\ there is probably no conceptual barrier that would prevent us from studying the ramified case, but the analysis  would become considerably more complicated. Let $\underline{r}$ denote the collection of integers $\{r_{\kappa,1},r_{\kappa,2}\}_{\kappa : k \into \Fpbar}$, with $0 \le r_{\kappa,1} - r_{\kappa,2} \le p$. In the terminology of Definition~\ref{def:hodge-type-names}, we say that $\underline{r}$ is a $p$-bounded Hodge type. 

Let us write $\cX^{\underline{r}}_{\red}$ for the reduced closed substack of $\cX_{2,\red}$ whose $\F$-points are representations $\rhobar : G_K \to \GL_2(\F)$ having crystalline lifts with labeled Hodge--Tate weights $\{r_{\kappa,1},r_{\kappa,2}\}$. Thanks to the results of \cite{gls12} on the weight part of Serre's conjecture, the condition that $\sigma \in W(\rhobar)$ is equivalent to a statement about the existence of crystalline lifts of~$\rho$. As a consequence, if the Hodge type $\underline{r}$ is \emph{regular}, meaning that $r_{\kappa,1} > r_{\kappa,2}$ for all~$\kappa$, then Theorem~\ref{thm:cegs-main-X} says precisely that $\cX^{\underline{r}}_{\red}$  is one of the irreducible components of $\cX_{2,\red}$ (or a union of two irreducible components, in case all the differences $r_{\kappa,1} - r_{\kappa,2}$ are equal to $p$).

We are interested in saying something about the stacks $\cX^{\underline{r}}_{\red}$ in the \emph{irregular} case, i.e., when $r_{\kappa,1} = r_{\kappa,2}$ for one or more embeddings $\kappa$ (in which case $\cX^{\underline{r}}_{\red}$ has codimension in $\cX_{2,\red}$ equal to the number of embeddings $\kappa$ such that $r_{\kappa,1} = r_{\kappa,2}$).  To explain how we do this, we need to introduce the companions of the stacks $\cX^{\underline{r}}_{\red}$ on the side of the CEGS stacks. 

By construction the stack $\cZ^\dd$ is equipped with a morphism
\begin{equation}\label{eq:Zdd-resolution}
	\cC^{\dd,\BT} \to \cZ^\dd	
\end{equation}
where  $\cC^{\dd,\BT}$ is the stack of Breuil--Kisin modules of height at most one with tame descent data and satisfying a Kottwitz-type determinant condition. Indeed, $\cZ^{\dd}$ is defined to be the scheme-theoretic image (in the sense of \cite{emertongeeproper}) of $\cC^{\dd,\BT}$ in the stack of \'etale $\varphi$-modules with descent data. 

The map \eqref{eq:Zdd-resolution} can be thought of as a partial resolution of the stack $\cZ^\dd$. Resolutions of moduli of Galois representations by moduli of objects coming from integral $p$-adic Hodge theory  have played a fundamental role in the deformation theory of Galois representations,  and hence in the study of automorphy lifting theorems, going back to the work of Wiles and Taylor--Wiles \cite{Wiles,TaylorWiles}. In this particular guise the inspiration comes from the work of Kisin, as the map \eqref{eq:Zdd-resolution} can be thought of as a globalization of the maps $\Theta_{V_{\F}}$ of \cite{kis04}.

A large part of this article can be thought of as an analysis of some of the finer properties of the map \eqref{eq:Zdd-resolution}. To say more, we need to introduce some additional notation. The stack $\cC^{\dd,\BT}$ has a decomposition
\[ \coprod_{\tau} \cC^{\tau,\BT} \]
where the disjoint union is taken over tame inertial types $\tau:I_K \to \GL_2(\Qpbar)$, and the substack $\cC^{\tau,\BT}$ consists of Breuil--Kisin modules whose descent data has type $\tau$. We write $\cZ^\tau$ for the scheme-theoretic image of $\cC^{\tau,\BT}$ in $\cZ^{\dd}$.  By Theorem~1.4(2) of \cite{cegsA} the underlying reduced substack  $\cZ^{\tau,1}$ of $\cZ^{\tau}$ is precisely the union of the components $\overline{\cZ}(\sigma)$ for Serre weights $\sigma$ that occur as Jordan--H\"older factors of $\overline{\sigma}(\tau)$, the reduction mod $p$ of the representation $\sigma(\tau)$ associated to $\tau$ by the inertial local Langlands correspondence. We can now state precisely which of the Emerton--Gee and CEGS stacks we check are isomorphic.

\begin{thm}[{Theorem~\ref{thm:EG-CEGS-isom}}]
    The stack $\cZ^{\tau}$ is isomorphic to $\cX^{\tau,\BT}$, the Emerton--Gee stack of potentially Barsotti--Tate representations of type $\tau$.
\end{thm}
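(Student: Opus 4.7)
The plan is to build a natural morphism $\cZ^{\tau}\to\cX^{\tau,\BT}$ and then verify that it is an isomorphism via a versal-ring comparison. The key ingredients are Kisin's theory relating Breuil--Kisin modules to potentially crystalline representations \cite{kis04}, the universal property of the scheme-theoretic image construction of \cite{emertongeeproper}, and the identification of versal rings on $\cZ^{\dd}$ and $\cX_2$ already invoked in the proof of Theorem~\ref{thm:cegs-main-X}.

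First I would construct the map. Since $K/\Qp$ is unramified, Fontaine's equivalence (together with the tame descent data formalism) identifies étale $\varphi$-modules with descent data along a fixed tame extension $K'/K$ trivializing $\tau$ with a substack of the stack of étale $(\varphi,\Gamma)$-modules underlying $\cX_2$. This gives a natural morphism $\cZ^{\dd}\to\cX_2$, and hence by restriction $\cZ^{\tau}\to\cX_2$. The resolution $\cC^{\tau,\BT}\to\cZ^{\tau}$ sends a BK-module of height $\le 1$ with descent data of type $\tau$ and Kottwitz determinant condition to its étale realization, whose underlying Galois representation is potentially Barsotti--Tate of type $\tau$ by Kisin's theorem. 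Hence the composite $\cC^{\tau,\BT}\to\cX_2$ factors through the closed substack $\cX^{\tau,\BT}$, and since $\cZ^{\tau}$ is the scheme-theoretic image of $\cC^{\tau,\BT}$, the universal property of \cite{emertongeeproper} descends this to the desired morphism $\cZ^{\tau}\to\cX^{\tau,\BT}$.

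Next I would verify the morphism is an isomorphism by comparing versal rings at each $\Fbar_p$-point $\rhobar$. Unpacking the scheme-theoretic image description, the versal ring of $\cZ^{\tau}$ at $\rhobar$ is the quotient of the versal ring of the corresponding étale $\varphi$-module that supports a lift to a BK-module of height $\le 1$ with descent data of type $\tau$ satisfying the Kottwitz condition. By Kisin's theorem on potentially crystalline deformation rings of minuscule Hodge type, these quotients are exactly the potentially Barsotti--Tate deformation rings of type $\tau$ for the associated Galois representation, which in turn are the versal rings of $\cX^{\tau,\BT}$ at $\rhobar$ in the Emerton--Gee construction. Combined with the already-known matching of versal rings of $\cZ^{\dd}$ and $\cX_2$ at $\rhobar$, this identifies the versal rings on both sides of the constructed map, and the standard criterion for isomorphism of Noetherian $p$-adic formal algebraic stacks (bijection on finite-type points plus agreement of versal rings) concludes the argument.

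The main obstacle will be making the versal-ring identification precise in families rather than merely pointwise, so that the two scheme-theoretic images (one defining $\cZ^{\tau}$ inside $\cZ^{\dd}$, the other implicit in the Emerton--Gee definition of $\cX^{\tau,\BT}$) really coincide as formal substacks and not just carry the same reduced structure. Concretely, this requires that Kisin's equivalence between BK-modules and potentially Barsotti--Tate representations is compatible with $p$-adically complete coefficient rings on both sides in a uniform way, and that Fontaine's equivalence for unramified base really identifies the two ambient stacks rather than merely relating them on closed points. The unramifiedness of $K$ is critical here, as only in that setting does descent data along $K'/K$ faithfully recover the $\Gamma$-action.
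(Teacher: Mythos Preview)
There is a genuine gap in your construction of the map. You claim that \'etale $\varphi$-modules with tame descent data from $K'$ to $K$ can be identified with a substack of $\cX_2$, but this is not correct: such objects correspond to representations of $G_{K_\infty}$, not of $G_K$. The descent data only recovers the action of $\Gal(K'_\infty/K_\infty) \cong \Gal(K'/K)$, taking us from $G_{K'_\infty}$-representations to $G_{K_\infty}$-representations; it does not produce a $G_K$-action. Consequently there is no natural map $\cR^{\dd} \to \cX_2$ or $\cZ^{\dd} \to \cX_2$, and your proposed construction of $\cZ^\tau \to \cX^{\tau,\BT}$ does not get off the ground. Your final remark that tame descent data ``recovers the $\Gamma$-action'' conflates the tame group $\Gal(K'/K)$ with the cyclotomic group $\Gamma$; these are unrelated.

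The paper's argument runs in the opposite direction. It uses the map $f : \cX_2 \to \cR_2$ of \cite[Thm.~3.7.2]{EGmoduli}, induced on Galois representations by restriction from $G_K$ to $G_{K_\infty}$, and proves (Lemma~\ref{lemma:f-is-monomorphism}) that its restriction $\cX^{\tau,\BT} \to \cR_2$ is a \emph{monomorphism}; this requires a full-faithfulness result for restriction on tamely potentially finite flat representations (Lemma~\ref{lem:restriction-lemma}), a non-trivial input you do not mention. The closed immersion $g : \cZ^\tau \hookrightarrow \cR_2$ is also a monomorphism. Rather than constructing a direct map between $\cX^{\tau,\BT}$ and $\cZ^\tau$, the paper forms the fiber product $\cY = \cX^{\tau,\BT} \times_{\cR_2} \cZ^\tau$ and shows that both projections are isomorphisms, applying a criterion from \cite{LLLM-models} that combines essential surjectivity on $\Spf(\cO')$-points for finite flat $\cO'/\cO$, flatness over $\cO$, and analytic unramifiedness (reducedness of versal rings). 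Your versal-ring philosophy is close in spirit to this last step, but the paper's route sidesteps the need to build a map in the problematic direction. Note too that the paper's argument is carried out for arbitrary finite $K/\Qp$, so unramifiedness plays no role here, contrary to your closing sentence.
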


Assume henceforth that the type $\tau$ is non-scalar. The underlying reduced substack $\cC^{\tau,\BT,1}$ of $\cC^{\tau,\BT}$ has precisely $2^f$ irreducible components $\cC^\tau(J)$, indexed by subsets~$J$ of the set of embeddings $\kappa: k \into \Fpbar$. Write $\cZ^\tau(J)$ for the scheme-theoretic image of $\cC^\tau(J)$ in $\cZ^{\tau,1}$.

There is a combinatorial formula (which we  recall in equation \eqref{def:s_J} below) which associates to each pair $(\tau,J)$ 
a tuple of integers $s_{J,\kappa} \in [-1,p-1]$ indexed by the embeddings $\kappa: k \into \Fpbar$, as well as a character $\Theta_J : k \to \Fpbar^\times$.  Define $\cP_\tau$ to be the collection of sets $J$ such that  $s_{J,\kappa} \in [0,p-1]$ for all $\kappa$, i.e., such that $s_{J,\kappa} = -1$ does not occur. Then for each $J \in \cP_\tau$, it makes sense to define the Serre weight
\[ \sigmabar(\tau)_J = (\Theta_J \circ \det) \otimes \bigotimes_{\kappa : k \into \Fpbar} (\Sym^{s_{J,\kappa}} k^2) \otimes_{k,\kappa}\Fpbar.
\]
Every irreducible component of both $\cC^{\tau,\BT,1}$ and $\cZ^{\tau,1}$  has dimension $[K:\Qp]$. Under the map
\[ \cC^{\tau,\BT,1} \to \cZ^{\tau,1}, \]
it is proved in \cite{cegsC} that
\begin{itemize}
    \item If $J \in \cP_\tau$, then the component $\cC^\tau(J)$ dominates a component of $\cZ^{\tau,1}$;\ that is, $\cZ^\tau(J)$ is some irreducible component of $\cZ^{\tau,1}$; while on the other hand,
    \item If $J \not\in \cP_\tau$, then the scheme-theoretic image of $\cC^\tau(J)$ in $\cZ^{\tau,1}$ has positive codimension. Following \cite{cegsC} we refer to these $\cC^{\tau}(J)$ as ``vertical components'' of $\cC^{\tau,\BT,1}$.
\end{itemize}
The first part is made more precise in \cite{cegsA}, as follows.
\begin{thm}[{\cite[Thm.~6.2(5)]{cegsA}}]\label{thm:cegs-z-tau-j}
  If $J \in \cP_{\tau}$, then $\cZ^\tau(J)$ is precisely the component $\overline{\cZ}(\sigmabar(\tau)_J)$;\ in particular, we have $\rhobar \in \cZ^\tau(J)$ if and only if $\sigmabar(\tau)_J \in W(\rhobar)$, and $\cZ^{\tau}(J)$ depends only in the Serre weight $\sigmabar(\tau)_J$.
\end{thm}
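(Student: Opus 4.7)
The plan is to identify $\cZ^\tau(J)$ with one of the components $\overline{\cZ}(\sigma)$ of $\cZ^{\tau,1}$ by a dimension argument, and then to pin down $\sigma$ by computing the Serre weight of a sufficiently generic finite-type point of $\cC^\tau(J)$.

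Since $\cC^\tau(J)$ is irreducible, its scheme-theoretic image $\cZ^\tau(J)$ in $\cZ^{\tau,1}$ is a closed irreducible substack. For $J \in \cP_\tau$ the bullet recalled from \cite{cegsC} gives $\dim \cZ^\tau(J) = [K:\Qp]$, equal to the dimension of every irreducible component of $\cZ^{\tau,1}$, so $\cZ^\tau(J)$ is itself an irreducible component of $\cZ^{\tau,1}$. By Theorem~1.4(2) of \cite{cegsA} recalled above, this component has the form $\overline{\cZ}(\sigma)$ for some (necessarily non-Steinberg) Jordan--H\"older factor $\sigma$ of $\sigmabar(\tau)$.

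To identify $\sigma = \sigmabar(\tau)_J$, the approach is to produce a finite-type point $\rhobar$ on $\cC^\tau(J)$ not lying on any other component of $\cC^{\tau,\BT,1}$ and to verify directly that $\sigmabar(\tau)_J \in W(\rhobar)$;\ Theorem~\ref{thm:cegs-main-Z} then places $\rhobar$ on $\overline{\cZ}(\sigmabar(\tau)_J)$, and irreducibility of $\cZ^\tau(J)$ forces $\sigma = \sigmabar(\tau)_J$. Such a $\rhobar$ is produced via an explicit normal form (a ``gauge'') for the Breuil--Kisin modules parametrized by $\cC^\tau(J)$: on each $\kappa$-isotypic component the partial Frobenius has an elementary-divisor shape prescribed by whether $\kappa \in J$, and the integers $s_{J,\kappa} \in [0,p-1]$ together with the character $\Theta_J$ are designed precisely to record this shape. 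Under the crystalline-lifts characterization of $W(\rhobar)$ from \cite{gls12}, a generic point in this gauge lifts to a crystalline representation of labeled Hodge--Tate weights matching $(s_{J,\kappa})$ with determinant twisted by $\Theta_J$, which is exactly the condition that $\sigmabar(\tau)_J$ is a Serre weight of $\rhobar$.

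The ``in particular'' clauses are then immediate: the equivalence $\rhobar \in \cZ^\tau(J) \iff \sigmabar(\tau)_J \in W(\rhobar)$ is Theorem~\ref{thm:cegs-main-Z} applied to the non-Steinberg weight $\sigmabar(\tau)_J$, and the dependence of $\cZ^\tau(J)$ only on $\sigmabar(\tau)_J$ follows from the same identification. The hardest step is the identification of $\sigma$: producing a clean Breuil--Kisin normal form on $\cC^\tau(J)$, giving a genericity argument that ensures the chosen $\rhobar$ does not lie on any other $\cC^\tau(J')$, and bookkeeping the twist $\Theta_J$ and sign/normalization conventions carefully enough that the Serre weight extracted is literally $\sigmabar(\tau)_J$ rather than a twist or Frobenius conjugate of it.
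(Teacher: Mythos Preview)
This theorem is stated in the paper as a citation to \cite[Thm.~6.2(5)]{cegsA}; the paper does not give a self-contained proof of it in the introduction where it appears. Your proposal is a reasonable sketch of the strategy behind the original \cite{cegsA,cegsC} argument: use the dimension count to see that $\cZ^\tau(J)$ is an irreducible component of $\cZ^{\tau,1}$, and then identify which component by examining a sufficiently generic point. One caveat: to pin down $\sigma$, you need $\rhobar$ to lie on a \emph{unique} component of $\cZ^{\tau,1}$, not merely on a unique component of $\cC^{\tau,\BT,1}$, since distinct $\cC^\tau(J')$ can map to the same $\overline{\cZ}(\sigma')$; in practice this is arranged by choosing $\rhobar$ with $W(\rhobar) \cap \JH(\sigmabar(\tau)) = \{\sigmabar(\tau)_J\}$, which is slightly sharper than what you wrote.

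That said, the present paper does more than cite this result: it supplies a \emph{new, purely local} proof as a byproduct of its methods (see Remark~\ref{rem:converse-remark-regular}). The route is genuinely different from yours. Rather than computing the Serre weight of a single generic point, the paper characterizes \emph{all} finite-type points of $\cZ^\tau(J)$ at once: Theorem~\ref{prop:modified-basis-of-invariants} describes the $\F'$-points of $\cZ^\tau(J)$ via an explicit normal form for the partial Frobenius on $(M^0)_i$, and then both directions of the \cite{gls12} criterion (Theorem~\ref{thm:gls-irregular} together with its converse in the regular case) translate that normal form into the existence of a crystalline lift of Hodge type $r(\tau,J)$. This avoids the genericity step entirely and yields the characterization of every point, which is what makes the method extend to irregular $J \notin \cP_\tau$ in Theorem~\ref{thm:irregular-locus}. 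Your approach, by contrast, relies on the bijection between components and Serre weights already being in place and only identifies the component label; it would not by itself give the pointwise description, and it does not extend to the vertical components.
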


Equivalently, again using the results of \cite{gls12} on the weight part of Serre's conjecture,  Theorem~\ref{thm:cegs-z-tau-j} can be rephrased as follows. (Here  $\tilde{\Theta}_J$ is any extension to $G_K$ of the inertial character identified with $\Theta_J$  via Artin reciprocity. See Section~\ref{sec:notation:HT} for the various normalizations related to Serre weights and Hodge--Tate weights that we use throughout this paper.)

\begin{thm}[{{\cite[Thm.~6.2(5)]{cegsA}}}, second version\textbf{}]\label{thm:cegs-z-tau-j-HT-wets}
  Suppose $J \textbf{}\in \cP_\tau$. Then $\rhobar \in \cZ^\tau(J)$ if and only if $\rhobar \otimes \tilde{\Theta}_J^{-1}$ has a crystalline lift with $\kappa$-labeled Hodge--Tate weights $\{-s_{J,\kappa},1\}$ at each embedding $\kappa : k \into \Fpbar$.
\end{thm}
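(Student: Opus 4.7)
The plan is to deduce this second version from the first version (Theorem~\ref{thm:cegs-z-tau-j}) together with the equivalence between membership in $W(\rhobar)$ and the existence of crystalline lifts with prescribed regular Hodge--Tate weights, as established by Gee--Liu--Savitt in \cite{gls12}. Concretely, Theorem~\ref{thm:cegs-z-tau-j} reduces the claim to showing that
\[
\sigmabar(\tau)_J \in W(\rhobar) \iff \rhobar \otimes \tilde{\Theta}_J^{-1} \text{ admits a crystalline lift with $\kappa$-labeled Hodge--Tate weights } \{-s_{J,\kappa},1\},
\]
so the task is purely a comparison of two descriptions of Serre weights.

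First I would invoke the twisting invariance of the set $W(\rhobar)$: for any character $\chi : G_K \to \Fpbar^\times$ one has $\sigma \in W(\rhobar)$ if and only if $\sigma \otimes (\chi \circ \det)^{-1} \in W(\rhobar \otimes \chi^{-1})$. Applied with $\chi = \tilde{\Theta}_J$ (where the character $\tilde\Theta_J \circ \det$ of $G_K$ is identified with its restriction to inertia via Artin reciprocity, with the normalization fixed in Section~\ref{sec:notation:HT}), this lets us strip off the character $\Theta_J \circ \det$ from $\sigmabar(\tau)_J$. The statement then reduces to the assertion that the Serre weight $\bigotimes_{\kappa} (\Sym^{s_{J,\kappa}} k^2) \otimes_{k,\kappa} \Fpbar$ lies in $W(\rhobar \otimes \tilde\Theta_J^{-1})$ if and only if $\rhobar \otimes \tilde\Theta_J^{-1}$ has a crystalline lift with $\kappa$-labeled Hodge--Tate weights $\{-s_{J,\kappa},1\}$.

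This last equivalence is then precisely the Gee--Liu--Savitt characterization of Serre weights for $\rhobar$ (unramified $K$) in terms of the existence of a crystalline lift whose $\kappa$-labeled Hodge--Tate weights recover the Serre weight exponents. Here the exponents $s_{J,\kappa} \in [0,p-1]$ (using that $J \in \cP_\tau$) correspond to the regular Hodge--Tate weights having gaps $1+s_{J,\kappa} \in [1,p]$, which are exactly the gaps allowed by a Serre weight. The only non-routine part of the argument is to verify that the normalizations match:\ one has to check that with the conventions of Section~\ref{sec:notation:HT} the Hodge--Tate weights appearing in \cite{gls12} are $\{-s_{J,\kappa},1\}$ rather than, say, $\{0,1+s_{J,\kappa}\}$ or the opposite signs. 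This is a book-keeping check involving the choice of Artin reciprocity (arithmetic versus geometric) and the convention that identifies the labeled Hodge--Tate weight $\{a,b\}$ with the Serre weight $\Sym^{b-a-1} k^2$ up to a twist by $\det^a$.

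I expect the main (modest) obstacle to be exactly this normalization matching, especially the interaction between the character $\tilde\Theta_J$ — defined via the combinatorial data $(\tau,J)$ and Artin reciprocity — and the twist one needs to perform on the crystalline lift to absorb the determinant character $\Theta_J \circ \det$ appearing in $\sigmabar(\tau)_J$. Once this is pinned down, the proof is simply a citation of Theorem~\ref{thm:cegs-z-tau-j} combined with the main result of \cite{gls12}.
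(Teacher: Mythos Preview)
Your proposal is correct and matches the paper's own treatment: the paper does not give a separate proof of this theorem, but simply introduces it as an equivalent reformulation of Theorem~\ref{thm:cegs-z-tau-j} ``using the results of \cite{gls12} on the weight part of Serre's conjecture.'' The reduction via twisting by $\tilde\Theta_J$ and the appeal to the GLS characterization of $W(\rhobar)$ in terms of crystalline lifts is exactly the intended argument; the paper even records the relevant normalization in Section~\ref{sec:notation:HT} (namely $\sigmabar_{\vec t,\vec s}\in W(\rhobar)$ iff $\rhobar$ has a crystalline lift with $\kappa_j$-labeled Hodge--Tate weights $\{-s_j-t_j,1-t_j\}$), which makes the normalization check you flag routine.
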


In other words, for regular Hodge types $\underline{r}$, the companions of the stacks $\cX^{\underline{r}}_{\red}$ on the CEGS side are precisely the stacks $\cZ^{\tau}(J)$ for $J \in \cP_{\tau}$. It is natural, then, to imagine that the  positive codimension loci $\cZ^{\tau}(J)$ for $J\notin \cP_\tau$ are the companions of the stacks $\cX^{\underline{r}}_{\red}$ for irregular $\underline{r}$;\ and indeed, this is what we show.

Observe that the statement of Theorem~\ref{thm:cegs-z-tau-j-HT-wets}  makes sense even if some $s_{J,\kappa}$ is equal to $-1$, i.e., if $J \not\in \cP_{\tau}$. The only difference is that when $J \in \cP_{\tau}$, the  Hodge--Tate weights  $\{-s_{J,\kappa},1\}$ are always regular (distinct), while if $s_{J,\kappa} = -1$ then the  $\kappa$-labeled Hodge--Tate weights $\{-s_{J,\kappa},1\}$ are irregular. We prove that the statement of Theorem~\ref{thm:cegs-z-tau-j-HT-wets} remains valid even if $J \not\in \cP_\tau$;\ that is, we prove the following.

\begin{thm}[{Theorem~\ref{thm:irregular-locus}}]\label{thm:main-result-1-intro}
   For general $J$ we have $\rhobar \in \cZ^\tau(J)$ if and only if $\rhobar \otimes \tilde{\Theta}_J^{-1}$ has a crystalline lift with $\kappa$-labeled Hodge--Tate weights $\{-s_{J,\kappa},1\}$ at each embedding $\kappa : k \into \Fpbar$. In particular $\cZ^{\tau}(J)$ depends only on the $s_{J,\kappa}$ and on $\Theta_J$;\ or, if one likes, only on the expression for the ``fake Serre weight'' $\sigmabar(\tau)_J$, whose definition contains a $\Sym^{-1}$ if $J \not\in \cP_{\tau}$.
\end{thm}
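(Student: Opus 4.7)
The case $J \in \cP_\tau$ is Theorem~\ref{thm:cegs-z-tau-j-HT-wets}, so the new content is the case $J \notin \cP_\tau$, where some $s_{J,\kappa}=-1$ and the asserted Hodge--Tate weights $\{-s_{J,\kappa},1\}=\{1,1\}$ are irregular at those $\kappa$. The plan is to prove the two implications separately, using the explicit shape-theoretic description of the components $\cC^\tau(J)$ from \cite{cegsC} together with the lifting theory for Breuil--Kisin modules.

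For the ``only if'' direction, I would begin by recalling that $\cC^\tau(J)$ contains a dense open substack parameterizing Breuil--Kisin modules with descent data of type $\tau$ of ``shape $J$'', whose combinatorics, together with $\tau$, determine the integers $s_{J,\kappa}$ and the character $\Theta_J$. Given such a module $\mathfrak{M}$ over $\F$, one lifts it to a characteristic-zero Breuil--Kisin module $\widetilde{\mathfrak{M}}$ by lifting the Frobenius matrix entries compatibly with the descent data and the height-$\leq 1$ condition, producing a formally smooth family. Its generic fiber yields a potentially Barsotti--Tate representation of type $\tau$, whose twist by $\tilde\Theta_J^{-1}$ is crystalline with $\kappa$-labeled Hodge--Tate weights that can be read off from the shape as $\{-s_{J,\kappa},1\}$ (with the irregular value $\{1,1\}$ appearing exactly when $s_{J,\kappa}=-1$). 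This gives the required crystalline lift for $\rhobar$ in a dense open substack of $\cZ^\tau(J)$, and the conclusion extends to all of $\cZ^\tau(J)$ because the locus of $\rhobar$ admitting such a lift is closed in $\cX_{2,\red}$.

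For the ``if'' direction, let $\rho$ be a crystalline lift of $\rhobar \otimes \tilde\Theta_J^{-1}$ with the specified Hodge--Tate weights. Then $\rho \otimes \tilde\Theta_J$ is a potentially crystalline lift of $\rhobar$ of type $\tau$ with Hodge--Tate weights in $\{0,1\}$ at each embedding, hence potentially Barsotti--Tate, so $\rhobar \in \cZ^\tau$. Attached to $\rho \otimes \tilde\Theta_J$ is a Breuil--Kisin module $\mathfrak{M}$ with descent data of type $\tau$; its mod-$p$ reduction $\overline{\mathfrak{M}}$ is an $\F$-point of $\cC^{\tau,\BT,1}$, which I would show lies on $\cC^\tau(J)$. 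One identifies the component by matching the shape of $\overline{\mathfrak{M}}$ at each $\kappa$ against the combinatorial recipe of \cite{cegsC}: the shape of $\mathfrak{M}$ in characteristic zero is governed by the Hodge--Tate weights of $\rho$, and it descends to a compatible shape mod~$p$ that is precisely that of $\cC^\tau(J)$.

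The principal obstacle lies exactly at the ``collision'' embeddings where $s_{J,\kappa}=-1$. In the regular case the Hodge--Tate weights at $\kappa$ determine the shape of the Breuil--Kisin module there uniquely, but when the weights coincide the shape of the characteristic-zero module is ambiguous and its reduction could a priori land on one of several neighbouring components. The heart of the argument is the verification that the shape combinatorics at these collision places still selects the component indexed by $J$ (as opposed to one obtained by flipping $\kappa$ into or out of $J$), and dually that the characteristic-zero lifts constructed from $\cC^\tau(J)$ realize all crystalline lifts with weight $\{1,1\}$ at those embeddings. Once this combinatorial analysis is in place, the two implications combine to give the stated characterization, and the final sentence---that $\cZ^\tau(J)$ depends only on $(s_{J,\kappa},\Theta_J)$---follows formally from the equivalence.
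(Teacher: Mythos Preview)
Your proposal rests on a mistaken premise: that twisting by the character $\tilde\Theta_J$ converts a crystalline lift of $\rhobar\otimes\tilde\Theta_J^{-1}$ with Hodge--Tate weights $\{-s_{J,\kappa},1\}$ into a potentially Barsotti--Tate lift of $\rhobar$ of inertial type~$\tau$, and conversely. This is false on both counts. Twisting by a one-dimensional character can only change the inertial type by a scalar, so the trivial type becomes $\tilde\Theta_J|_{I_K}\oplus\tilde\Theta_J|_{I_K}$, never the non-scalar type $\tau=\eta\oplus\eta'$. Moreover twisting shifts each pair of Hodge--Tate weights by the same amount, so the gap $1-(-s_{J,\kappa})=s_{J,\kappa}+1$ is preserved and cannot become~$1$ at every embedding. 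Consequently neither your ``if'' nor your ``only if'' argument can be carried out as written; the relationship between $\cZ^\tau(J)$ and the crystalline locus is not mediated by any such twist, even in the regular case $J\in\cP_\tau$, where it is instead a manifestation of the weight part of Serre's conjecture.

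The paper's argument is structurally quite different. One first uses the comparison between the Emerton--Gee and CEGS stacks (Theorem~\ref{thm:EG-CEGS-isom}) to import into $\cZ^{\dd,1}$ a reduced closed substack $\cN^\tau(J)$ whose finite type points are exactly those $\rhobar$ admitting a crystalline lift of Hodge type $r(\tau,J)$. The inclusion $\cN^\tau(J)\subset\cZ^\tau(J)$ is then proved by combining two explicit descriptions of \'etale $\varphi$-modules: Theorem~\ref{prop:modified-basis-of-invariants} characterises the finite type points of $\cZ^\tau(J)$ by the existence of a basis for which the partial Frobenius matrices have the form $B_i\,\mathrm{diag}(v,v^{-s_{J,i}})\,v^{-\theta_{J,i}}$, while the structure theorem of \cite{gls12} (Theorem~\ref{thm:gls-irregular}) shows that any $\rhobar$ with a crystalline lift of $p$-bounded Hodge type $\underline{r}$ admits exactly such a basis. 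The reverse inclusion is \emph{not} proved directly; instead one computes $\dim\cZ^\tau(J)=[K:\Qp]-|S^\tau(J)|$ (Proposition~\ref{prop:dim-Z-tau-J}), notes that $\cN^\tau(J)$ is equidimensional of the same dimension by \cite[Thm.~4.8.14]{EGmoduli}, and concludes equality from the irreducibility of $\cZ^\tau(J)$. Thus the ``only if'' direction is obtained by a dimension-and-irreducibility argument rather than by constructing crystalline lifts, and the shape analysis you allude to enters only through Theorem~\ref{prop:modified-basis-of-invariants}, not through any lifting of Breuil--Kisin modules to characteristic zero.
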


As an application, we  deduce the following.  

\begin{cor}[{Corollary~\ref{cor:irreducibility-of-EG-irregular}}]\label{cor:intro}  Suppose that $0 \le r_{\kappa,1} - r_{\kappa,2} \le p$ for all $\kappa$, with not all differences equal to $p$.
The closed substack $\cX_{\red}^{\underline{r}}$ of $\cX_{2,\red}$ whose finite type points are representations $\rhobar:G_K \to \GL_2(\Fpbar)$ having crystalline lifts with labeled Hodge--Tate weights $\{r_{\kappa,1},r_{\kappa,2}\}$ is irreducible.
\end{cor}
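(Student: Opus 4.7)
My plan is to identify $\cX^{\underline{r}}_{\red}$, up to a character-twist automorphism of $\cX_{2,\red}$, with one of the stacks $\cZ^\tau(J)$, which is irreducible by construction.

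Set $s_\kappa := r_{\kappa,1} - r_{\kappa,2} - 1 \in [-1,p-1]$, so the hypothesis says not all $s_\kappa$ equal $p-1$. I would first verify from the combinatorial formula \eqref{def:s_J} that, outside this ``all-$(p-1)$'' case (the Steinberg case), every tuple $\underline{s} \in [-1,p-1]^f$ arises as $\underline{s_{J,\cdot}}$ for some non-scalar tame inertial type $\tau$ and some subset $J$ of embeddings $\kappa : k \hookrightarrow \Fpbar$. Fix such a pair $(\tau, J)$ realizing $\underline{s}$.

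By Theorem~\ref{thm:main-result-1-intro}, an $\Fpbar$-point of $\cZ^\tau(J)$ is precisely a $\rhobar$ such that $\rhobar$ admits a crystalline lift with $\kappa$-labeled Hodge--Tate weights $\{-s_{J,\kappa} + c_\kappa,\, 1 + c_\kappa\}$, where $c_\kappa$ is the $\kappa$-labeled Hodge--Tate weight of the chosen lift $\tilde{\Theta}_J$ of $\Theta_J$. Pick any crystalline character $\chi$ of $G_K$ with $\kappa$-labeled Hodge--Tate weight $r_{\kappa,1} - 1 - c_\kappa$; such characters exist (for instance as products of Lubin--Tate and unramified characters). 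Twisting by $\bar\chi$ is an automorphism of $\cX_{2,\red}$, and it carries $\cZ^\tau(J)$ --- viewed as a closed substack of $\cX_{2,\red}$ via Theorem~\ref{thm:EG-CEGS-isom} --- to the reduced closed substack whose $\Fpbar$-points are representations admitting a crystalline lift with $\kappa$-labeled Hodge--Tate weights $\{r_{\kappa,1}, r_{\kappa,2}\}$, which is precisely $\cX^{\underline{r}}_{\red}$.

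Finally, $\cZ^\tau(J)$ is irreducible, being the scheme-theoretic image of the irreducible stack $\cC^\tau(J)$. The isomorphism above then yields the irreducibility of $\cX^{\underline{r}}_{\red}$. The main obstacle is essentially bookkeeping: confirming the combinatorial realizability of an arbitrary non-Steinberg $\underline{s}$ by some $(\tau, J)$, tracking Hodge--Tate weights through the twist, and checking that matching $\Fpbar$-points identifies the two reduced closed substacks (standard, since both are reduced closed substacks of $\cX_{2,\red}$ with the same finite type points).
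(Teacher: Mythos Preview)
Your proposal is correct and follows essentially the same approach as the paper. The paper packages your character twist into the choice of~$\tau$ (so that $\underline{r}\sim r(\tau,J)$ on the nose, via Proposition~\ref{prop:existence-tame-type}) and transfers irreducibility from $\cZ^\tau(J)$ to $\cX^{\underline{r}}_{\red}$ via the component bijection of Corollary~\ref{cor:loci-in-Zdd}, but unwinding that corollary one finds precisely your ``same finite type points, both reduced closed'' argument through Theorem~\ref{thm:EG-CEGS-isom}; the only cosmetic imprecision is that $\tilde\Theta_J$ is a mod~$p$ character, so by ``its Hodge--Tate weight $c_\kappa$'' you should mean the weight of a fixed crystalline lift of~$\tilde\Theta_J$.
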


This result is new in the irregular case. The point is that by Theorem~\ref{thm:main-result-1-intro} this locus is isomorphic to one of the stacks $\cZ^{\tau}(J)$, and the latter is irreducible by construction.

Since the closed substack $\cX^{\underline{r}}_{\red}$ in the irregular case has positive codimension in $\cX_{2,\red}$, it is reasonable to expect that there are inclusions $\cX^{\underline{r}}_{\red} \subset \cX^{\underline{r}'}_{\red}$ for other collections of Hodge--Tate weights $\underline{r}'$. In the final section of the paper, we establish this expectation in a number of cases. 
We define three operators $\theta_{\kappa}$, $\mu_{\kappa}$, and $\nu_{\kappa}$ on  Hodge types $\underline{r}$ that are irregular at $\kappa$ (see Definition~\ref{def:operators};\ the first two can be viewed as analogues of partial theta operators and partial Hasse invariants, as described in the work of Diamond and Sasaki on geometric Serre weight conjectures (\cite{DiamondSasaki}) and discussed further in \cite{HWThesis}. We then establish the following.

\begin{thm}[{Theorem~\ref{thm:operator-lifts-exist}}]\label{thm:operators-intro}
    Suppose  $\underline{r}' \in \{ \theta_{\kappa}(\underline{r}), \mu_{\kappa}(\underline{r}), \nu_{\kappa}(\underline{r}) \}$ and assume that~$\underline{r}'$ remains $p$-bounded. Then $\cX^{\underline{r}}_{\red} \subset \cX^{\underline{r}'}_{\red}$.
\end{thm}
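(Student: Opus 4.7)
The plan is to transfer the question to the CEGS side using the isomorphism $\cZ^\tau \cong \cX^{\tau,\BT}$ of Theorem~\ref{thm:EG-CEGS-isom} and the identification of $\cX^{\underline{r}}_{\red}$ with (a union of) stacks of the form $\cZ^\tau(J)$ given by Theorem~\ref{thm:main-result-1-intro}. Since the stacks involved are reduced, it suffices to check the inclusion on finite type points:\ for every $\rhobar : G_K \to \GL_2(\Fpbar)$ that admits a crystalline lift with labeled Hodge--Tate weights $\underline{r}$, one must produce a crystalline lift with weights $\underline{r}'$.

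For each of the three operators I would first work out, combinatorially, how a pair $(\tau,J)$ encoding $\underline{r}$ via $s_{J,\kappa}+1 = r_{\kappa,1}-r_{\kappa,2}$ (up to the twist by $\widetilde{\Theta}_J$) transforms into a pair $(\tau',J')$ encoding $\underline{r}'$. The partial-Hasse-invariant-like operator $\mu_\kappa$ should correspond to toggling the embedding $\kappa$ in or out of $J$ with $\tau$ essentially unchanged, the partial-theta operator $\theta_\kappa$ should toggle $\kappa$ accompanied by twisting $\tau$ by an appropriate fundamental character at $\kappa$, and $\nu_\kappa$ should correspond to a third such modification. I would then exhibit the inclusion $\cZ^\tau(J) \subset \cZ^{\tau'}(J')$ by constructing, from the universal rank two Breuil--Kisin module with descent data on the ``vertical'' component $\cC^\tau(J)$, a new Breuil--Kisin module whose descent data has type $\tau'$, whose shape forces it to factor through $\cC^{\tau'}(J')$, but whose associated étale $\varphi$-module (and hence the $\Fpbar$-point of $\cZ^{\dd}$ it defines) is unchanged. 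Concretely this is a controlled matrix manipulation at embedding $\kappa$:\ rescaling a basis vector by a power of the uniformizer (for $\theta_\kappa$), swapping the two basis vectors accompanied by a rescaling (for $\mu_\kappa$), or the composite construction for $\nu_\kappa$. Taking scheme-theoretic images then yields the required inclusion on the CEGS side, which Theorem~\ref{thm:EG-CEGS-isom} transfers back to $\cX^{\underline{r}}_{\red} \subset \cX^{\underline{r}'}_{\red}$.

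The main obstacle is the verification, at the level of Breuil--Kisin matrices, that the manipulation genuinely lands on the predicted component $\cC^{\tau'}(J')$ rather than some other component of $\cC^{\tau',\BT,1}$:\ one must match the shape of the transformed matrix against the explicit parametrizations of the irreducible components of $\cC^{\tau',\BT,1}$ from \cegsBlastsection, and simultaneously check that the Kottwitz-type determinant condition continues to hold. This last point is where the hypothesis that $\underline{r}'$ remains $p$-bounded is essential, as it is what guarantees that the new Breuil--Kisin module still has height at most one. I expect the $\theta_\kappa$ case to be the most delicate, since it changes the inertial type;\ the $\mu_\kappa$ and $\nu_\kappa$ cases should reduce to comparing components within a fixed $\cC^{\tau,\BT,1}$, which is essentially combinatorial once the Breuil--Kisin manipulation has been written down.
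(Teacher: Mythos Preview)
Your high-level plan---transfer to the CEGS side and manipulate Breuil--Kisin data---is in the spirit of one of the paper's two proofs, but the details diverge in important ways. The paper's first (and shorter) argument does not touch Breuil--Kisin modules or descent data at all. It uses Corollary~\ref{cor:gls-converse}, the converse to the GLS criterion: $\rhobar$ has a crystalline lift of $p$-bounded Hodge type $\underline{r}$ if and only if the \'etale $\varphi$-module $M^0$ (without descent data) attached to $\rhobar|_{G_{K_\infty}}$ admits a basis with partial Frobenius matrices $B_i\begin{pmatrix} v^{r_{i,1}} & 0 \\ 0 & v^{r_{i,2}}\end{pmatrix}$. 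Since $r_{j,1}=r_{j,2}$, the diagonal matrix at $j$ is scalar, and a single explicit change of basis of $M^0$ converts the $\underline{r}$-form into the $\underline{r}'$-form for each of $\theta_j,\mu_j,\nu_j$. No types, no components, no shape analysis.

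The paper's second proof (``shape-shifting'') is closer to your outline, but it keeps $\tau$ \emph{fixed} throughout---it never changes the inertial type, even for $\theta_j$. The mechanism is not to build a new Breuil--Kisin module with different descent data, but to observe that a point of $\cC^\tau(J)$ with shape $\txII$ at every $i \in J\triangle J'$ already lies in $\cC^\tau(J')$, since shape $\txII$ satisfies both closed conditions; a density argument over the fibres of $\cC^\tau(J)\to\cZ^\tau(J)$ then gives $\cZ^\tau(J)\subset\cZ^\tau(J')$. Your proposed combinatorics is also off: for both $\mu_j$ and $\theta_j$ the paper takes $J\triangle J'=\{j-1\}$ (not $\{j\}$), the two cases being distinguished by whether $(j-2,j-1)$ is a transition, while it is $\nu_j$ that corresponds to $J\triangle J'=\{j\}$. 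Arranging the correct transition pattern requires the freedom in the choice of $(\tau,J)$ presenting $\underline{r}$ afforded by Proposition~\ref{prop:existence-tame-type} and Remark~\ref{rem:cant-choose-j-when}. Your alternative idea---rescaling a basis vector to change the descent-data type while preserving the underlying point of $\cR^{\dd}$---might be salvageable, but you have not said how to match the determinant condition or the shape on the target stack, and it is strictly more work than either route the paper takes.
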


As a consequence  we deduce that every representation with a crystalline lift of Hodge type $\underline{r}$ also has a crystalline lift of Hodge type $\underline{r}'$, for $\underline{r},\underline{r}'$ as in the theorem.

\subsection{Outline of the paper} We begin in Section~\ref{sec:cegs-review} by recalling from \cite{cegsB} the definitions of various stacks of Breuil--Kisin modules and \'etale $\varphi$-modules with descent data, and reviewing many of the results from the papers \cite{cegsA,cegsB,cegsC} that we will need.

In Section~\ref{sec:shape} we analyze the irreducible components of the tamely potentially Barsotti--Tate Breuil--Kisin moduli stacks $\cC^{\tau,\BT,1}$ from the point of view of the \emph{shape} of a Breuil--Kisin module, as studied in \cite{breuillatticconj,CDMa,LLLM}. Using this idea we give a new description of the irreducible components of $\cC^{\tau,\BT,1}$ with the advantage that we can characterize all of the $\Fpbar$-points on each component $\cC^\tau(J)$ of $\cC^{\tau,\BT,1}$. This stands in contrast to \cite{cegsC}, where only a dense set of points are described. As an application, in Section~\ref{sec:finite-type-z} we  describe the $\Fpbar$-points of the stacks $\cZ^{\tau}(J)$. This is a key ingredient in the rest of the paper, and also leads to a new and purely local proof of the characterization of the irreducible components of $\cZ^{\dd}$ in terms of crystalline lifts.

In Section~\ref{sec:comparisons} we introduce the Emerton--Gee stacks to the discussion, and establish the isomorphism between the stacks $\cX^{\tau,\BT}$ and $\cZ^{\tau}$. One consequence  is the existence of a reduced closed substack $\cZ^{\underline{r}}_{\red}$ of $\cZ^{\dd}$, for each $p$-bounded and non-Steinberg Hodge type $\underline{r}$, whose $\Fpbar$-points are precisely the representations $\rhobar:G_K \to \GL_2(\Fpbar)$ having a crystalline lift of type $\underline{r}$.  In Section~\ref{sec:comparison-section} we combine the results of Section~\ref{sec:shape} with results from \cite{gls12} and combinatorial input from Section~\ref{sec:combinatorial-results} to prove that the stacks $\cZ^{\underline{r}}_{\red}$ are equal to the stacks $\cZ^{\tau}(J)$ for suitable choices of $\tau$ and $J$. This  establishes Theorem~\ref{thm:main-result-1-intro} and Corollary~\ref{cor:intro}.

Finally in Section~\ref{sec:geography} we prove Theorem~\ref{thm:operators-intro}. In fact we give two proofs. One argument is relatively direct and computational. The other is more geometric, but also somewhat more involved, making use of the description of the components of $\cC^{\tau,\BT,1}$ in terms of shape.

\subsection{Notation and conventions}\label{sec:notation}
Let $p > 2$ be a prime number. Throughout the paper we fix $K/\Qp$, an unramified extension of $\Qp$ of degree $f$ with residue field $k$. 

We also fix an algebraic closure $\Qpbar$ of $\Qp$, with residue field $\Fpbar$. Our representations of the Galois group $G_K$ will have coefficients in these fields. Let $E$ be a finite extension of $\Qp$ contained in $\Qpbar$, with ring of integers $\cO$, uniformizer $\varpi$, and residue field $\F$. As usual we will assume that the coefficient field $E$ is ``sufficiently large'', in a sense that we make precise in Section~\ref{sec:tame-types} below.

Since $K/\Qp$ is unramified we can (and do) identify the embeddings $K \into \Qpbar$ with embeddings $k \into \Fpbar$. We fix some embedding $\kappa_0 : k \into \Fpbar$ and recursively label the remaining embeddings by elements of $\Z/f\Z$ by taking $\kappa_{i+1}^p = \kappa_i$.

\subsubsection{Hodge--Tate weights and Serre weights}\label{sec:notation:HT} Throughout this paper we will use notation and conventions as in the series of papers \cite{cegsA,cegsB,cegsC}. Hodge--Tate weights are normalized so that the cyclotomic character has all Hodge--Tate weights equal to $-1$.  We normalize local class field theory so that uniformizers correspond to geometric Frobenius elements.

A \emph{Serre weight} is an irreducible $\Fpbar$-representation of $\GL_2(k)$. Each Serre weight has the form 
\[\sigmabar_{\vec{t},\vec{s}}=\otimes_{j=0}^{f-1} (\det {}^{t_j} \Sym^{s_j} k^2) \otimes_{k,\kappa_j} \Fpbar\]
for integers $t_j$ and integers $0 \leq s_j \leq p-1$. If we furthermore assume that $0 \leq t_j \leq p-1$  and not all $t_j$ are $p-1$, each Serre weight  has a unique representation as one of the  $\sigmabar_{\vec{t},\vec{s}}$'s.

To each representation $\rhobar : G_K \to \GL_2(\Fpbar)$ one associates a set $W(\rhobar)$ of Serre weights. In our conventions, we have $\sigmabar_{\vec{t},\vec{s}} \in W(\rhobar)$ if and only if $\rhobar$ has a crystalline lifts with Hodge--Tate weights $\{-s_j-t_j,1-t_j\}$ for the embedding $\kappa_j$ (\emph{cf.}~\cite[Def.~A.3]{cegsC}).

\subsubsection{Tame types.}\label{sec:tame-types} Throughout the paper we write $\tau$ for a non-scalar tame inertial type $I_K \to \GL_2(\cO)$. Such a representation is of the form $\tau
\simeq \eta \oplus \eta'$, and we say that $\tau$ is a \emph{ principal series
  type} if 
$\eta,\eta'$ both extend to characters of $G_K$. Otherwise,
$\eta'=\eta^q$, and $\eta$ extends to a character of~$G_L$, where $L$ denotes the unramified quadratic extension of $K$.
In this case we say
that~$\tau$ is a \emph{cuspidal type}. 

Throughout the paper we will often need to handle the principal series and cuspidal cases separately. 
We define
\begin{align*}
f':=\begin{cases}
f &\text{ if } \tau \text{ is principal series}\\
2f &\text{ if } \tau \text{ is cuspidal}
\end{cases}
\end{align*}
and set $e' = p^{f'} - 1$. Fix a uniformizer $\pi$ of $K$ and choose $\pi'$ such that $(\pi')^{e'} = \pi$. In the principal series case we define $K' = K(\pi')$, while in the cuspidal case we define $K' = L(\pi')$, so that in either case $K'$ is a finite tamely ramified Galois extension of~$K$ with inertial degree $f'$ and having the property that $\tau|_{I_{K'}}$ is trivial. We assume that $E$ is sufficiently large in the sense that all embeddings $K' \into \Qpbar$ have image in~$E$.

Write $k'$ for the residue field of $K'$, and let $\kappa'_0 : k' \into \Fpbar$ be any extension of $\kappa_0$  to $k'$. Recursively label the embeddings $k' \into \Fpbar$ by elements of $\Z/f'\Z$ by taking $(\kappa'_{i+1})^{p} = \kappa'_i$. For all $g \in G_K$ we set $h(g) = g(\pi')/\pi' \in \mu_{e'}(K')$. Identifying $\mu_{e'}(K')$ with $\mu_{e'}(k')$, we  can then define fundamental characters $\omega_i$ of level
$f$ by setting $\omega_i = \kappa_i \circ h : I_K \to \Fpbar^\times$  for
     each $i \in \Z/f\Z$ (\emph{cf.}~\cite[Lem.~1.4.1]{cegsC} and the discussion following). Similarly define  fundamental characters $\omega'_i$ of level
$f'$ by setting $\omega'_i = \kappa'_i \circ h : I_K \to \Fpbar^\times$  for
     each $i \in \Z/f'\Z$. Let $\widetilde{\omega}_i : I_K \to \Qpbar^{\times}$ denote the multiplicative lift of $\omega_i$, and similarly for $\widetilde{\omega}'_i$.

In the cuspidal case, let $\mathfrak{c} \in \Gal(K'/K)$ denote the unique nontrivial element fixing $\pi'$.

\subsubsection{Inertial local Langlands and Serre weights}\label{sec:ill-sw}

Henniart's appendix to \cite{breuil-mezard}
associates a finite dimensional irreducible $E$-representation $\sigma(\tau)$ of
$\GL_2(\cO_K)$ to each inertial type $\tau$; we refer to this association as the {\em
  inertial local Langlands correspondence}. The reduction modulo $p$ of $\sigma(\tau)$ is not well-defined, but its Jordan--H\"older factors are. As in \cite{cegsA} we normalize the inertial local Langlands correspondence so that $\rhobar$ has a potentially Barsotti--Tate lift of $\tau$ if and only if at
  $W(\rhobar)$ contains at least one Jordan--H\"older factor of the reduction mod $p$ of  $\sigma(\tau)$.

We now give an explicit description of the Jordan--H\"older factors of the reduction mod $p$ of  $\sigma(\tau)$, following  the recipe from \cite[Appendix~A]{cegsC}. Recall that we assume $\tau$ to be a non-scalar tame type. We define $0 \le \gamma_i \le p-1$ (for $i \in \Z/f'\Z$) 
to be the unique integers such that 
\begin{equation}\label{eq:gamma_i} \eta (\eta')^{-1} = \prod_{i=0}^{f'-1}
(\widetilde{\omega}'_{i})^{\gamma_i} .
\end{equation}
Observe in the cuspidal case  that $\gamma_i + \gamma_{i+f} = p-1$.

\begin{defn}\label{def:profile}
Let us say that a subset $J \subset \Z/f' \Z$ is a \emph{profile} if either 
\begin{itemize}
    \item $\tau$ is non-scalar principal series, and $J$ is any subset;\ or
    \item $\tau$ is cuspidal, and for all $i$ we have $i \in J$ if and only if $i+f \not\in J$.
\end{itemize}
\end{defn}
 For each profile $J$, we define tuples of integers $(s_{J, i})_i$ and $(t_{J,i})_i$ indexed by $\Z/f'\Z$, as follows. 
\begin{align}\label{def:s_J}
&s_{J, i} := \begin{cases}
p-1-\gamma_i - \delta_{J^{c}}(i) &\text{ if } i-1\in J, \\
\gamma_i - \delta_J(i) &\text{ if } i-1 \not\in J.
\end{cases}\\
&t_{J, i} := \begin{cases}
\gamma_i + \delta_{J^{c}}(i) \hspace{1.25cm}&\text{ if } i-1\in J, \\
0 \hspace{1.25cm}&\text{ if } i-1 \not\in J.
\end{cases}
\end{align}
Note that $s_{J, i} \in [-1,p-1]$, $t_{J, i} \in [0,p]$, and $s_{J, i}$ is $f$-periodic.  Define
$\Theta_J: k^{\times} \to \F^\times$ to be the unique character such that
\begin{equation}\label{def:theta_J} \Theta_J \circ \mathrm{N}_{k'/k} = \eta' \otimes \prod_{i \in \Z/f'Z} (\kappa'_i)^{t_{J, i}}.\end{equation}
Here we regard $\eta'$ as a character of $k'$ via the Artin map for $K'$, and $\mathrm{N}_{k'/k}$ denotes the norm map. (It is true, but not obvious, that the right-hand side of \eqref{def:theta_J} factors through $\mathrm{N}_{k'/k}$.)

\begin{defn}\label{defn:def-of-sigmabar_J}
Define  $S^{\tau}(J) := \{i \in \Z/f\Z \,|\, s_{J, i} = -1\}$
and take $\cP_{\tau}$ to be the set of profiles $J$ such that $S^{\tau}(J)$ is empty.

Then for each $J \in \cP_{\tau}$, we define
\[ \sigmabar(\tau)_J = ({\Theta}_J \circ \det) \otimes  \bigotimes_{i \in \Z/f\Z} (\Sym^{s_{J,i}} k^2) \otimes_{k, \kappa_i} \Fpbar.\] 
\end{defn}

\begin{thm}
The Jordan--H\"older factors of the reduction mod $p$ of $\sigma(\tau)$ are precisely the Serre weights $\sigmabar(\tau)_J$ for $J \in \cP_\tau$.
\end{thm}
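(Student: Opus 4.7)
The plan is to verify this statement directly, case by case on the nature of $\tau$, using well-known formulas for the reduction mod $p$ of principal series and cuspidal representations of $\GL_2(k)$ (as in the work of Diamond on the weight part of Serre's conjecture, and as systematically carried out in \cite[Appendix A]{cegsC}). First I would split into cases. In the principal series case, $\sigma(\tau)$ factors through $\GL_2(\cO_K) \twoheadrightarrow \GL_2(k)$ and equals the parabolic induction $\Ind_{B(k)}^{\GL_2(k)}(\eta \otimes \eta')$, with $\eta, \eta'$ regarded as characters of $k^\times$ via the Artin map. In the cuspidal case, $\sigma(\tau)$ is inflated from the cuspidal representation of $\GL_2(k)$ attached to the regular character $\eta$ of $(k')^\times$, whose construction is via Deligne--Lusztig induction (equivalently, Green's parametrization).

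Next, I would invoke the standard description of the Jordan--H\"older constituents of the reduction mod $p$ in each case. For a principal series $\Ind_B(\eta \otimes \eta')$, where $\eta(\eta')^{-1}$ has digits $\gamma_i$ as in \eqref{eq:gamma_i}, the reduction mod $p$ has Jordan--H\"older factors indexed by subsets $J \subseteq \Z/f\Z$, with $\Sym$-exponents determined by the two-branched rule in \eqref{def:s_J}: the $i$-th exponent takes one of two values depending on whether $i-1 \in J$, and the determinant twist is built up from the complementary branch, matching \eqref{def:theta_J}. The cuspidal case admits a parallel description, now indexed by profiles in $\Z/f'\Z$ satisfying $i \in J \iff i + f \notin J$, the extra constraint arising from $\mathfrak{c}$-conjugation on characters of $(k')^\times$ cutting the $f'$-periodic data down to $f$-periodic Serre weights.

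The heart of the argument is then a combinatorial bookkeeping exercise: one must verify that the formulas \eqref{def:s_J} and \eqref{def:theta_J} transcribe these standard Jordan--H\"older factors into the $\sigmabar_{\vec{t},\vec{s}}$ normalization of Section~\ref{sec:notation:HT}, including the correct determinant twist coming from the Artin-map identification of $\eta'$. The main obstacle is handling the boundary case where some $s_{J,i}$ would equal $-1$: these indices are precisely $S^\tau(J)$, and such subsets $J \notin \cP_\tau$ arise when the principal series or cuspidal representation has fewer Jordan--H\"older factors than generically expected (for instance when $\eta(\eta')^{-1}$ has a ``vanishing digit'' $\gamma_i \in \{0,p-1\}$ at some embedding, making certain would-be constituents coalesce). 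Verifying that exactly the subsets outside $\cP_\tau$ drop out of the Jordan--H\"older list, rather than merely contributing a degenerate $\Sym^{-1}$ factor, is the most delicate point of the bookkeeping, and is the step I expect to require the most care.
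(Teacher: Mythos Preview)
Your proposal is essentially correct and matches the approach the paper takes, with one important caveat: the paper does not actually prove this theorem at all. It is stated without proof, immediately after Definition~\ref{defn:def-of-sigmabar_J}, as a summary of the combinatorial recipe laid out just before it, and the paper explicitly attributes that recipe to \cite[Appendix~A]{cegsC}. So what you have written is a sketch of the argument carried out in the cited reference, not a different route to the same result. Your outline of the principal series and cuspidal cases, the indexing by profiles, and the boundary behaviour when $s_{J,i}=-1$ is accurate and is exactly the content of that appendix.
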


\subsection{Acknowledgments} This project began at the APAW Collaborative Research Workshop at the University of Oregon in August 2022. We are grateful to Ellen Eischen, Maria Fox, Cathy Hsu, and Aaron Pollack for organizing the workshop, as well as for support from Eischen's NSF CAREER grant DMS-1751281 and her NSA MSP conference grant H98230-21-1-0029. The work was completed during various visits by six of us to the trimester program ``The Arithmetic of the Langlands Program'' at the Hausdorff Insitute for Mathematics, funded by the Deutsche Forschungsgemeinschaft under Germany's Excellence Strategy – EXC-2047/1 – 390685813. We thank Frank Calegari, Ana Caraiani, Laurent Fargues, and Peter Scholze for their efforts organizing the trimester.

We thank Matthew Emerton and Toby Gee for valuable conversations. B.L. was supported by National Science Foundation grant DMS-1952556 and the Alfred P.\ Sloan Foundation. D.S.\ was supported by NSF grant DMS-1952566. H.W.\ was supported by the Herchel Smith Postdoctoral Fellowship Fund, and the Engineering and Physical Sciences Research Council (EPSRC) grant EP/W001683/1. 

\section{Stacks of Breuil--Kisin modules and \'etale \texorpdfstring{$\varphi$}{phi}-modules}\label{sec:cegs-review}


As we have explained in the introduction, the main objects of study in this paper are certain stacks $\cC^{\tau,\BT}$ of Breuil--Kisin modules and $\cZ^{\tau}$ of \'etale $\varphi$-modules, as well as the map 
\[ \cC^{\tau,\BT} \to \cZ^{\tau} \]
between them. We begin with a brief recollection of the definitions and basic properties of these objects.

\subsection{Breuil--Kisin modules}\label{subsec: bk modules definitions}

As in~\cite{cegsC}, we will consider Breuil--Kisin modules with coefficients and descent data.  Let $\mathfrak{S}:=W(k')[\![u]\!]$, and extend the arithmetic Frobenius on $W(k')$ (i.e. the homomorphism $\varphi:W(k')\rightarrow W(k')$ induced by $x\mapsto x^p$ on $k'$) to a self-map $\varphi$ of $\mathfrak{S}$ by setting $\varphi(u)=u^p$.  We extend the action of $\Gal(K'/K)$ on $W(k')$ to $\mathfrak{S}$ via $g(u)=h(g)u$. 

If $A$ is a $p$-adically complete $\Z_p$-algebra, we set $\mathfrak{S}_A:=(W(k') \otimes_{\Zp} A)[\![u]\!]$ and extend the actions of $\varphi$ and $\Gal(K'/K)$ $A$-linearly.  Setting $v:=u^{e(K'/K)}$, we define the subring $\mathfrak{S}_A^0:=(W(k) \otimes_{\Zp} A)[\![v]\!]$, which is preserved by $\varphi$ but on which $\Gal(K'/K)$ acts trivially.
 Let $E(u)$ denote the minimal polynomial of $\pi'$ over $W(k')$.

\begin{defn}
A \emph{Breuil--Kisin module with $A$-coefficients and descent data} is a finite projective $\mathfrak{S}_A$-module $\mathfrak{M}$ together with semilinear maps
\[  \varphi_{\mathfrak{M}}: \mathfrak{M}\rightarrow \mathfrak{M}    \]
and 
\[   \widehat{g}: \mathfrak{M}\rightarrow \mathfrak{M},\qquad g\in \Gal(K'/K).   \]
Write $\varphi^*\mathfrak{M} := \mathfrak{S} \otimes_{\varphi,\mathfrak{S}} \mathfrak{M}$.
We impose the further requirements that the linearization 
\[  \Phi_{\mathfrak{M}}: \varphi^\ast\mathfrak{M}\rightarrow \mathfrak{M}  \]
of $\varphi_{\gM}$ is an isomorphism after inverting $E(u)$, that each $\widehat{g}$ commutes with $\varphi_{\mathfrak{M}}$, and that $\widehat{g}_1\circ\widehat{g}_2=\widehat{g_1g_2}$ for all $g_1, g_2\in\Gal(K'/K)$.  We say that the Breuil--Kisin module~$\mathfrak{M}$ has \emph{height at most $h$} if the cokernel of $\Phi_{\mathfrak{M}}$ is killed by $E(u)^h$.

Morphisms of Breuil--Kisin modules are morphisms of $\mathfrak{S}_A$-modules that commute with the actions of $\varphi$ and $\Gal(K'/K)$.
\end{defn}

Recall from Section~\ref{sec:notation} that we have fixed a coefficient field $E/\Qp$ with ring of integers $\cO$, uniformizer $\varpi$, and residue field $\F$, and that $E$ is sufficiently large in the sense that it admits embeddings of the field $K'$.

\begin{defn}
For each embedding $\kappa'_i : k' \into \Fpbar$, which we identify with its  lift $W(k') \into \Qpbar$, there is a corresponding idempotent $e_i \in W(k') \otimes_{\Zp} \cO$ 
 such that $x \otimes 1$  and $1 \otimes \kappa'_i(x)$ have the same action on $e_i(W(k') \otimes_{\Zp} \cO)$.
 
 For any $\cO$-algebra $A$ and any $A$-module $\gM$, we set $\gM_i := e_i \gM$.  In case $\gM$ is a Breuil--Kisin module, we write 
\[ \Phi_{\mathfrak{M},i}: \varphi^\ast(\mathfrak{M}_{i-1})\rightarrow \mathfrak{M}_i\] for the  morphism induced by $\Phi_{\fM}$, which we call the $i$-th \emph{partial Frobenius} morphism.
\end{defn}

We note the following lemma, which is an immediate consequence of \cite[Prop.~5.1.9(1)]{emertongeeproper}.

\begin{lemma}\label{lem:zariski-locally-free}
    Let $A$ be a $p$-adically complete $\cO$-algebra, and let $\gM$ be a Breuil--Kisin module with $A$-coefficients and descent data. Then each $\gM_i$ is Zariski locally  on $\Spec(A)$ free as an $A[\![u]\!]$-module.
\end{lemma}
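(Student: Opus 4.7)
The plan is to deduce the lemma directly from the cited result \cite[Prop.~5.1.9(1)]{emertongeeproper}. First I would set up the decomposition explicitly. By the definition of a Breuil--Kisin module with $A$-coefficients and descent data, $\gM$ is finite projective over $\gS_A = (W(k') \otimes_{\Zp} A)\llb u \rrb$. The orthogonal idempotent decomposition $1 = \sum_i e_i$ in $W(k') \otimes_{\Zp} \cO$, indexed by the embeddings $\kappa'_i : k' \into \Fpbar$, induces a decomposition $\gM = \bigoplus_i \gM_i$ with $\gM_i = e_i \gM$. Via $\kappa'_i$ the ring $e_i(W(k') \otimes_{\Zp} A)\llb u \rrb$ is identified with $A\llb u \rrb$, so each $\gM_i$ is naturally an $A\llb u \rrb$-module, and the claim makes sense.

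The next step is simply to invoke the cited proposition, which asserts precisely that any finite projective $\gS_A$-module decomposes under these idempotents into summands that are Zariski locally free over $A\llb u \rrb$. Applying this to $\gM$ yields the conclusion for each $\gM_i$.

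There is essentially no obstacle beyond bookkeeping: one must confirm that the conventions of \cite{emertongeeproper} match the present setup. In particular, the extra structures carried by $\gM$ (the semilinear Frobenius $\varphi_{\gM}$ and the descent data action of $\Gal(K'/K)$) play no role here, since the statement concerns only the $A\llb u \rrb$-module structure on each individual $\gM_i$; once the finite projective $\gS_A$-module structure is in hand, these data can be discarded for the purposes of this lemma.
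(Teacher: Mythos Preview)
Your proposal is correct and matches the paper's approach exactly: the paper simply states that the lemma is an immediate consequence of \cite[Prop.~5.1.9(1)]{emertongeeproper}, and you have correctly unpacked what that citation entails, including the observation that the Frobenius and descent data are irrelevant to the purely module-theoretic conclusion.
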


\begin{defn}
 Let $\tau$ be a tame inertial type, and let $A$ be a $p$-adically complete $\cO$-algebra. We say that the Breuil--Kisin module $\gM$ with $A$-coefficients and descent data is \emph{of type $\tau$} provided that Zariski locally on $\Spec(A)$ there is an $I(K'/K)$-equivariant isomorphism $\gM_i/u\gM_i \cong A \otimes_{\cO} \tau$ for each $i$. 
\end{defn}

\begin{defn}
 We define $\cC^{\tau,\BT}$ to be the  stack over $\Spf(\cO)$ which associates to any $\cO/\varpi^a$-algebra $A$ the groupoid of rank $2$ Breuil--Kisin modules with $A$-coefficients and descent data, of type $\tau$, and of height at most $1$, and additionally satisfying the Kottwitz-type strong determinant condition of \cite[\S4.2]{cegsB}. 
 
 We define $\cC^{\dd,\BT}$ to be the  union of the stacks $\cC^{\tau,\BT}$ for varying $\tau$ (including scalar types), and further write
$\cC^{\dd,\BT,1}$ and $\cC^{\dd,\BT,1}$ for the special fibers $\cC^{\dd,\BT} \times_{\cO} \F$ and $\cC^{\tau,\BT} \times_{\cO} \F$ respectively.
\end{defn}

\begin{remark}
We do not write out the strong determinant condition explicitly, because we will not need it. However, we recall from \cite[Lem~4.2.16]{cegsB} that this condition guarantees that the $\Spf(\cO)$-points of $\cC^{\tau,\BT}$ correspond to potentially Barsotti--Tate representations with Hodge--Tate weights $\{0,1\}$  (rather than \emph{contained in} $\{0,1\}$) at each embedding.
\end{remark}

\begin{prop}\label{prop:reduced-f-strong-det}
Suppose that $A$ is a reduced $\F$-algebra and let $\gM$ be a rank $2$ Breuil--Kisin module with $A$-coefficients and descent data, of type $\tau$, and of height at most $1$. Then $\gM$ is an object of $\cC^{\tau,\BT}(A)$ {\upshape(}i.e., it satisfies the strong determinant condition{\upshape)} if and only if locally on $\Spec(A)$ the determinant of each partial Frobenius maps $\Phi_{\gM,i}$, with respect to some {\upshape(}hence any{\upshape)} choice of bases, lies in $u^{e'} \cdot A[\![u]\!]^\times$.
\end{prop}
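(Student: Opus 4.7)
The plan is to unpack the strong determinant condition via the explicit description of \cegsBdeterminantconditionexplicit, and then to use reducedness of $A$ to translate it into the claimed condition on the scalar determinants $\det \Phi_{\gM,i}$.

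By Lemma~\ref{lem:zariski-locally-free} I may work Zariski-locally on $\Spec(A)$ and assume each $\gM_i$ is free of rank $2$ over $A[\![u]\!]$, so each partial Frobenius $\Phi_{\gM,i}$ is represented by a matrix $M_i \in \Mat_2(A[\![u]\!])$ in some choice of bases. Since $\gM$ has height at most $1$ and we are in the special fiber, where $E(u)$ reduces to $u^{e'}$, the cokernel of $M_i$ is killed by $u^{e'}$, and in particular $\det M_i$ divides $u^{2e'}$ in $A[\![u]\!]$. The explicit form of the strong determinant condition from \cegsBdeterminantconditionexplicit, in the rank two height one setting with tame type $\tau$, amounts to the statement that for each $i$ the characteristic polynomial of multiplication by $u$ on the cokernel $\gM_i/\Phi_{\gM,i}(\varphi^* \gM_{i-1})$ is precisely $u^{e'}$.

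For the forward direction I would argue as follows. If the strong determinant condition holds, then the characteristic polynomial of $u$ on $\coker(M_i)$ is $u^{e'}$; combined with reducedness, this forces $\coker(M_i)$ to be locally free of rank $e'$ as an $A$-module. Applying elementary divisor theory to $M_i$ at each point of $\Spec(A)$, working over the principal ideal domain $\kappa(\frakp)[\![u]\!]$, yields $\det M_i = u^{e'} \cdot (\text{unit})$ at each prime $\frakp$; reducedness of $A$ then globalizes this to $\det M_i \in u^{e'} \cdot A[\![u]\!]^\times$ locally on $\Spec(A)$. Conversely, given the determinant condition, $\coker(M_i)$ has $A$-length exactly $e'$ at every point of $\Spec(A)$ and is annihilated by $u^{e'}$, so its $u$-characteristic polynomial equals $u^{e'}$ pointwise; reducedness promotes this pointwise identity to an equality of polynomials over $A$, recovering the strong determinant condition.

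The main obstacle will be the careful match-up between the Kottwitz-type characteristic polynomial condition and the scalar condition on $\det M_i$, and in particular verifying that reducedness of $A$ is exactly the hypothesis needed for the two to agree. Over non-reduced bases the strong determinant condition genuinely records more information than the scalar determinant---for instance, it detects nilpotent contributions to the characteristic polynomial of $u$ that cannot be read off from the determinant alone---so the reducedness assumption is essential to the argument rather than a mere convenience.
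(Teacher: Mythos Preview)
Your overall strategy --- reduce to field-valued points and use reducedness of $A$ to globalize --- is the same as the paper's. But your characterization of the strong determinant condition has a genuine gap, and it is precisely where the content of the proof lies.

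The strong determinant condition of \cite[\S4.2]{cegsB} is not a single condition on the cokernel of $\Phi_{\gM,i}$; it is a collection of conditions on the $A[\![v]\!]$-determinants of the $\xi$-isotypic pieces $\Phi_{\gM,i,\xi}$ for characters $\xi$ of $I(K'/K)$. The lemma you cite, \cegsBdeterminantconditionexplicit, only gives the forward implication (strong determinant $\Rightarrow$ scalar determinant in $u^{e'} A[\![u]\!]^\times$), and only over a field. The paper explicitly notes that the converse to that lemma is \emph{false} in general, because the type in \cite{cegsB} is allowed to be mixed. So your claim that the strong determinant condition ``amounts to'' a single characteristic-polynomial condition on $\coker \Phi_{\gM,i}$ is exactly what needs to be proved, not something you can cite.

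The missing ingredient is this: because the type $\tau$ here is unmixed, all of the isotypic determinants $\det_{A[\![v]\!]} \Phi_{\gM,i,\xi}$ are equal to one another up to units, and equal (up to units) to the $A[\![u]\!]$-determinant of the full map $\Phi_{\gM,i}$. Once you have this, the argument of \cegsBdeterminantconditionexplicit\ runs in reverse and gives the converse over a field; then the reduction to fields via reducedness and Jacobson-ness (which you essentially have) finishes the proof. Your characteristic-polynomial reformulation, even if it could be made precise, does not bypass this isotypic analysis: the scalar determinant of $\Phi_{\gM,i}$ simply does not control the individual isotypic determinants without the unmixedness observation.
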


\begin{proof}
In case $A = \F'$ is a finite extension of $\F$, the `only if' direction is given by \cite[Lem.~4.2.11(2)]{cegsB}. The converse to \cite[Lem.~4.2.11(2)]{cegsB} is false in general, because the type $\tau$ in that reference is allowed to be \emph{mixed} (\emph{cf}.~\cite[Def.~3.3.2]{cegsB}). Since we assume here that the type is unmixed, the $A[\![v]\!]$-determinants of the maps $\Phi_{\gM,i,\xi}$ (the $\xi$-isotypic part of $\Phi_{\gM,i}$ for some character $\xi$ of $I(K'/K)$, in the notation of \cite{cegsB}) are all equal up to units, and indeed equal up to units to the $A[\![u]\!]$-determinant of $\Phi_{\gM,i}$;\ and then the argument for \cite[Lem.~4.2.11(2)]{cegsB} runs in the reverse direction as well.

For general coefficients, we reduce immediately to the case where $A$ has finite type as an $\F$-algebra. Since $A$ is then both reduced and Jacobson,  the strong determinant condition for $\gM$ is equivalent to the strong determinant condition for the base change of $\gM$ to each closed point of $A$. The result then follows from the case $\F'/\F$ finite of the previous paragraph.
\end{proof}

The following results are proved in \cite[Cor.~3.1.8, Cor.~4.5.3, Prop.~5.2.21]{cegsB} and \cite[Thm.~5.4.3]{cegsC}.

\begin{thm}\label{thm:stacks-results} The stacks $\cC^{\dd,\BT,1}$, and $\cC^{\tau,\BT,1}$ are algebraic stacks of finite type over $\cO$, while the stacks $\cC^{\dd,\BT}$ and $\cC^{\tau,\BT}$ are $\varpi$-adic formal algebraic stacks. Moreover:
\begin{enumerate}
  \item $\cC^{\tau,\BT}$ is analytically normal, Cohen--Macaulay, and flat over $\cO$.
  \item The stacks $\cC^{\dd,\BT,1}$ and $\cC^{\tau,\BT,1}$ are equidimensional of dimension $[K:\Qp]$.
  \item The special fibres $\cC^{\dd,\BT,1}$ and $\cC^{\tau,\BT,1}$ are reduced.
  \item If $\tau$ is non-scalar, then $\cC^{\tau,1}$  has $2^f$  irreducible components $\cC^{\tau}(J)$, in bijection with the set of profiles $J$.
\end{enumerate}
\end{thm}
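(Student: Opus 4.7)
The plan is to assemble the four claims from the companion papers \cite{cegsB} and \cite{cegsC}; since the theorem is essentially a summary, I will sketch the underlying strategy rather than reproduce the full arguments.

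For algebraicity and the $\varpi$-adic formal structure, I would represent the stack as a smooth quotient of a scheme parametrizing Breuil--Kisin modules equipped with a basis. By Lemma~\ref{lem:zariski-locally-free} every Breuil--Kisin module of type $\tau$ admits a basis Zariski-locally, so forgetting automorphisms gives a smooth surjection from an affine formal scheme. Inside the ambient moduli of all rank $2$ Breuil--Kisin modules (which is a $\varpi$-adic formal algebraic stack by the general theory of \cite{emertongeeproper}), the height $\leq 1$ condition and the type $\tau$ condition cut out closed formal substacks defined by explicit equations on the matrices of the partial Frobenius morphisms and the descent data. The strong determinant condition is also closed (by \cegsBdeterminantconditionexplicit), completing the identification of $\cC^{\tau,\BT}$ as a $\varpi$-adic formal algebraic stack and its special fiber as an algebraic stack of finite type.

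Claims (1)--(3) would then be proved by setting up a local model diagram. After decomposing $\gM = \bigoplus_i \gM_i$ along the idempotents $e_i$ and trivializing each $\gM_i$ as a free $A[\![u]\!]$-module, the data of $\Phi_\gM$ becomes a tuple of matrices $\Phi_i$ indexed by $i \in \Z/f'\Z$, each with determinant a unit times $u^{e'}$ (by Proposition~\ref{prop:reduced-f-strong-det}) and satisfying the equivariance imposed by $\tau$. In coordinates this exhibits $\cC^{\tau,\BT}$ as smooth-locally a product (over embeddings) of naive local models for $\GL_2$ with the minuscule cocharacter $(1,0)$. These local models are known to be normal, Cohen--Macaulay, flat over $\cO$, equidimensional of the correct dimension, and reduced in their special fiber, and all of these properties transfer along the smooth cover. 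The dimension count yields exactly $f = [K:\Qp]$ since each of the $f$ factors (after using the $\mathfrak{c}$-equivariance to reduce the cuspidal case from $f'$ factors to $f$) contributes $1$.

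Finally, for the component count in (4), I would stratify $\cC^{\tau,\BT,1}$ by the shape of the partial Frobenius matrices $\Phi_i$ modulo the Iwahori, following the approach developed in Section~\ref{sec:shape}. At each embedding $\kappa_i : k \into \Fpbar$ there is a binary choice of shape (essentially determined by which triangular pattern $\Phi_i$ fits modulo $u$), giving $2^f$ open strata whose closures are the sought-after components $\cC^\tau(J)$. In the principal series case the strata are indexed directly by subsets $J \subset \Z/f\Z$; in the cuspidal case there are \emph{a priori} $2^{f'} = 2^{2f}$ shapes at the level of $k'$-embeddings, but the equivariance constraint under the nontrivial element $\mathfrak{c} \in \Gal(K'/K)$ halves this to $2^f$, recovering precisely the profiles of Definition~\ref{def:profile}. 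The main obstacle is showing that the closure of each shape stratum is irreducible of dimension $f$; this requires an explicit parametrization via the local model of the previous paragraph, exhibiting each stratum as an open subvariety of an affine space of the expected dimension, and verifying that distinct shapes yield distinct top-dimensional components.
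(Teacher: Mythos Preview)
The paper itself offers no proof of this theorem: the sentence immediately preceding the statement records that the results are proved in \cite[Cor.~3.1.8, Cor.~4.5.3, Prop.~5.2.21]{cegsB} and \cite[Thm.~5.4.3]{cegsC}, and the paper simply quotes those conclusions. Your sketch is therefore not being compared against an argument in this paper but against the cited literature, and as a high-level outline of what happens in \cite{cegsB,cegsC} it is broadly accurate.

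One point is worth flagging. For part~(4) you propose to count components via the shape stratification and to invoke Section~\ref{sec:shape}. That is a perfectly good approach, but it is \emph{not} the one being cited: \cite[Thm.~5.4.3]{cegsC} establishes the component count by analysing a morphism to an auxiliary ``gauge stack'' $\mathcal{G}_\eta$, as the opening paragraph of Section~\ref{sec:shape} explains. The shape-based description is precisely the new contribution of Section~\ref{sec:shape} of this paper, and its identification with the components $\cC^\tau(J)$ of \cite{cegsC} is only completed at Theorem~\ref{thm:Ltau=Ctau}, which in turn relies on already knowing that there are $2^f$ components. So if you were writing this as a self-contained argument, you would need either to carry out the gauge-stack analysis, or to prove directly (without the counting shortcut used in Theorem~\ref{thm:Ltau=Ctau}) that the closures $\cL^\tau(J)$ are pairwise distinct; your final sentence gestures at this, but the paper's route to that fact goes through \cite{cegsC} rather than through shapes alone.
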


\subsection{\'Etale \texorpdfstring{$\varphi$}{phi}-modules}

\begin{defn}
If $A$ is a $\Z/p^a\Z$-algebra for some $a \ge 1$, then a \emph{weak \'etale $\varphi$-module with $A$-coefficients} for $K'$ is a finitely generated  $\mathfrak{S}_A[1/u]$-module $M$ together with a semilinear morphism
\[  \varphi_{M}: M\rightarrow M    \]
such that the linearization
\[  \Phi_M: \varphi^\ast M\ := \mathfrak{S} \otimes_{\varphi,\mathfrak{S}} M \rightarrow M \]
is an isomorphism.  Weak \'etale modules with $A$-coefficients for $K$ are defined identically, but with $\fS^0_A[1/v]$ in place of $\fS_A[1/u]$.

A \emph{weak \'etale $\varphi$-module with $A$-coefficients and descent data from $K'$ to $K$} is a weak \'etale $\varphi$-module  with $A$-coefficients for $K'$ together with additional semilinear morphisms 
\[   \widehat{g}: M\rightarrow M,\qquad g\in \Gal(K'/K)   \]
such that each $\widehat{g}$ commutes with $\varphi_{M}$, and $\widehat{g}_1\circ\widehat{g}_2=\widehat{g_1g_2}$ for all $g_1, g_2\in\Gal(K'/K)$.

An \emph{\'etale $\varphi$-module} is a weak \'etale $\varphi$-module such that $M$ is furthermore projective as an $\fS_A[1/u]$-module (resp.\ as an $\fS^0_A[1/v]$-module, for \'etale $\varphi$-modules for $K$).
\end{defn}

\begin{defn}
    We define $\cR^{\dd}$ to be the  stack over $\Spf(\cO)$ which associates to any $\cO/\varpi^a$-algebra $A$ the groupoid of rank $2$ \'etale $\varphi$ modules with $A$-coefficients and descent data.
\end{defn}

We will also sometimes want to make use of \'etale $\varphi$-modules without descent data. Write $\cR_2$ for the $\Spf(\cO)$-stack of rank $2$ \'etale $\varphi$-modules for $K$ (without descent data). The functor $\cR_2 \to \cR^{\dd}$ sending $M \leadsto M \otimes_{\gS^0[1/v]} \gS[1/u]$ is an equivalence, with inverse given by taking $\Gal(K'/K)$-invariants;\ \emph{cf.}\  \cite[Cor.~2.3.21]{EGmoduli}.

If $\gM$ is a Breuil--Kisin module with $A$-coefficients and descent data, then evidently $\gM[1/u]$ is an \'etale $\varphi$-module with $A$ coefficients and descent data. Inverting $u$ thus defines morphisms 
\begin{equation}\label{eq:morphism-to-R}
\cC^{\dd,\BT} \to \cR^\dd, \qquad \cC^{\tau,\BT} \to \cR^\dd .
\end{equation}

\begin{defn}
We define $\cZ^{\dd}$ and $\cZ^{\tau}$ to be the scheme-theoretic images (in the sense of \cite{emertongeeproper}) of the morphisms given in  \eqref{eq:morphism-to-R}. We further define $\cZ^{\dd,1}$, $\cZ^{\tau,1}$, and $\cZ^{\tau}(J)$ for each profile $J$ to be the scheme-theoretic images of $\cC^{\dd,\BT,1}$  $\cC^{\tau,\BT,1}$, $\cC^{\tau}(J)$  under the morphisms of \eqref{eq:morphism-to-R}. 
\end{defn}

Note that $\cZ^{\dd,1}$ and $\cZ^{\tau,1}$ are reduced as a consequence of Theorem~\ref{thm:stacks-results}(3), but need not be (and in general presumably will not be) the special fibers of $\cZ^{\dd}$ and~$\cZ^\tau$.

We recall some of the important properties of these stacks as established in \cite[Thm.~5.1.2, Prop.~5.1.4, Lem.~5.1.8, Prop.~5.2.20]{cegsB} and \cite[Thm~6.2]{cegsA}.

\begin{thm}\label{lem: C 1 and Z 1 are the underlying reduced substacks}

The stacks $\cZ^{\dd,1}$ and $\cZ^{\tau,1}$ are algebraic stacks of finite type over~$\cO$, while the stacks $\cZ^{\dd}$ and $\cZ^{\tau}$ are $\varpi$-adic formal algebraic stacks. Moreover:
\begin{enumerate}

  \item The  stacks $\cZ^{\dd,1}$ and  $\cZ^{\tau,1}$ are equidimensional of
dimension~$[K:\Qp]$.   
  \item The stacks $\cZ^{\dd,1}$ and $\cZ^{\tau,1}$ are reduced.   
  \item The irreducible components $\overline{\cZ}(\sigma)$ of $\cZ^{\dd,1}$ are in bijection with non-Steinberg Serre weights $\sigma$. Furthermore, for each finite extension $\F'/\F$ the $\F'$-points of $\overline{\cZ}(\sigma)$ are precisely the Galois representations $\rhobar:G_K \to \GL_2(\F')$ having $\sigma$ as a Serre weight.
  
\item If $\tau$ is non-scalar, then $\cZ^{\tau}(J) = \overline{\cZ}(\sigmabar(\tau)_J)$ for each $J \in \cP_\tau$, and these are precisely the irreducible components of $\cZ^{\tau,1}$.
\end{enumerate}
\end{thm}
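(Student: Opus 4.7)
The plan here is relatively modest, since this theorem is essentially a compilation of results established in \cite{cegsA,cegsB}; what I would do is trace how each item follows from the corresponding statement about $\cC^{\dd,\BT}$ (Theorem~\ref{thm:stacks-results}) together with general properties of scheme-theoretic images in the sense of \cite{emertongeeproper}.

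First, for the algebraicity and formal algebraicity assertions, I would verify that the morphisms $\cC^{\dd,\BT} \to \cR^{\dd}$ and $\cC^{\tau,\BT} \to \cR^{\dd}$ satisfy the hypotheses required for the scheme-theoretic image formalism of \cite{emertongeeproper} to produce a closed algebraic (respectively $\varpi$-adic formal algebraic) substack of $\cR^{\dd}$. The key point is that these morphisms are proper on the level of special fibres (a standard fact for Breuil--Kisin stacks of fixed height mapping to \'etale $\varphi$-module stacks, since on a fixed \'etale $\varphi$-module with $\F$-coefficients there are only finitely many Breuil--Kisin lattices of height at most one). Properness also gives that the underlying $\F$-points of $\cZ^{\tau,1}$ and $\cZ^{\dd,1}$ coincide with the images of the corresponding $\F$-points of $\cC^{\tau,\BT,1}$ and $\cC^{\dd,\BT,1}$.

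For (1), I would use that the morphism $\cC^{\tau,\BT,1} \to \cZ^{\tau,1}$ is surjective on the generic points of each irreducible component of the source and has zero-dimensional fibres over those generic points (again by the finiteness of Breuil--Kisin lattices in a given \'etale $\varphi$-module). Combined with the equidimensionality statement in Theorem~\ref{thm:stacks-results}(2), this forces the source and image to have equal dimension $[K:\Qp]$. Part (2) is easier: the scheme-theoretic image of a reduced substack along a quasi-compact morphism is automatically reduced, so the reducedness of $\cC^{\dd,\BT,1}$ in Theorem~\ref{thm:stacks-results}(3) transfers directly.

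The main substance is in parts (3) and (4). Every irreducible component of $\cZ^{\tau,1}$ must be the scheme-theoretic image of some irreducible component $\cC^{\tau}(J)$ of $\cC^{\tau,\BT,1}$; by the dimension comparison in (1), only those $J$ for which $\cC^{\tau}(J)$ dominates a component (equivalently, $J \in \cP_\tau$ by the distinction between horizontal and vertical components recalled in the introduction) can contribute. To label these by Serre weights and then to verify the characterization of $\F'$-points, the strategy is: analyze generic $\fM \in \cC^\tau(J)(\F')$ to compute $\fM[1/u]$ explicitly, convert this \'etale $\varphi$-module with descent data into a $G_K$-representation, and match the resulting shape to the formulas for $\sigmabar(\tau)_J$. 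Then to upgrade from ``dense points'' to ``all points'', invoke the results of \cite{gls12} on the weight part of Serre's conjecture, which characterize $W(\rhobar)$ via existence of crystalline lifts of prescribed Hodge--Tate weights, and which in turn correspond via Kisin's theory to existence of Breuil--Kisin models of the appropriate shape. The hardest step, as usual in this circle of ideas, is the computation of the generic \'etale $\varphi$-module on each component $\cC^\tau(J)$ and the verification that $\cP_\tau$ parametrizes exactly the non-Steinberg Serre weights occurring in $\JH(\overline{\sigma(\tau)})$ --- a combinatorial unpacking of the formulas \eqref{def:s_J} and \eqref{def:theta_J}.
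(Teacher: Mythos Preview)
The paper does not prove this theorem at all; the sentence immediately preceding it reads ``We recall some of the important properties of these stacks as established in \cite[Thm.~5.1.2, Prop.~5.1.4, Lem.~5.1.8, Prop.~5.2.20]{cegsB} and \cite[Thm~6.2]{cegsA},'' and the theorem is stated without proof. Your recognition that this is a compilation of cited results is therefore exactly right, and there is nothing substantive to compare against.

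Your expository sketch of how the cited papers establish these facts is broadly accurate, with one caveat worth flagging. For parts (3) and (4) you describe a strategy of computing generic points of $\cZ^\tau(J)$ and then ``upgrading from dense points to all points'' via \cite{gls12}. This is in fact closer to the \emph{new} argument that the present paper supplies (see Remark~\ref{rem:converse-remark-regular}, which advertises ``a new, purely local proof of a result from \cite{cegsA}'' obtained by combining Theorem~\ref{prop:modified-basis-of-invariants} with Theorem~\ref{thm:gls-irregular}). The original proof in \cite{cegsA} is less direct: it passes through the comparison of versal deformation rings with those of the Emerton--Gee stack and ultimately relies on the geometric Breuil--M\'ezard machinery, rather than on a direct \'etale $\varphi$-module computation plus \cite{gls12}. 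So your sketch is not wrong as a proof strategy, but it is not the route taken in the reference being cited.
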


\subsection{Galois representations}\label{sec:galois-representations}
We fix a compatible sequence $\{\pi_n\}$ of $p^n$th roots of $\pi$;\ since $\gcd(e(K'/K),p)=1$, this determines a compatible sequence $\{\pi_n'\}$ of $p^n$th roots of $\pi'$ such that $(\pi_n')^{e(K'/K)}=\pi_n$.  Let $K_\infty:=\cup_n K(\pi_n)$ and let $K_\infty':=\cup_nK'(\pi_n')$.  Then we identify $\Gal(K'/K)$ and $\Gal(K_\infty'/K_\infty)$.

By Fontaine's theory of the field of norms, if $|A| < \infty$ then the category of weak \'etale $\varphi$-modules with $A$-coefficients is equivalent to the category of $A$-representations of $G_{K_\infty'}$. There are various ways to write down such a functor, and in particular we will need to compare the functors to Galois representations of \cite{cegsC} and \cite{gls12}. For this reason we now recall the explicit descriptions of these functors.

Let $\cO_{\cE}$ denote the $p$-adic completion of $\mathfrak{S}\left[\frac 1 u\right]$; it is a discrete valuation ring with uniformizer $p$ and residue field $k'(\!(u)\!)$.  We let $\cE$ denote the field of fractions of $\cO_{\cE}$.  Note that the actions of $\varphi$ and $\Gal(K'/K)$ extend naturally to $\cO_{\cE}$ and $\cE$.

Fix an algebraic closure $\overline K$ of $K$ with ring of integers $\cO_{\overline K}$, and an embedding $K_\infty'\hookrightarrow \overline{K}$, and set $R:=\varprojlim_{x\mapsto x^p}\cO_{\overline K}$.  Write $\underline\pi:=(\pi_n)_n, \underline\pi':=(\pi_n')\in R$ and write $[\underline\pi], [\underline\pi']\in W(R)$ for their multiplicative lifts in the Witt vectors $W(R)$.

We may define a $\varphi$-equivariant inclusion $\mathfrak{S}\hookrightarrow W(R)$ by sending $u\mapsto [\underline\pi']$, and this restricts to a $\varphi$-equivariant inclusion $\mathfrak{S}^0\hookrightarrow W(R)$ sending $v$ to $[\underline\pi]$.  This injection extends to inclusions
\[  \cO_{\cE}\hookrightarrow W(\mathrm{Frac}(R))   \]
and 
\[  \cE\hookrightarrow W(\mathrm{Frac}(R))\left[\tfrac 1 p\right].   \]
The residue field $\mathrm{Frac}(R)$ of $W(\mathrm{Frac}(R))$ contains a separable closure $k'(\!(u)\!)^{\sep}$ of the residue field of $\cO_{\cE}$, and this extension of residue fields corresponds to an unramified extension $\cE^{\nr}$ of $\cE$.  We let $\hatEnr$ denote its $p$-adic completion, and we let $\hatOEnr \subset \hatEnr$ denote its ($p$-adically complete) ring of integers.

\begin{defn}
    We define the covariant functor $T$ from weak \'etale $\varphi$-modules with $A$-coefficients to Galois representations, given by
\[  T(M):=\left(\hatOEnr \otimes_{\mathfrak{S}[1/u]}M\right)^{\varphi=1}.\]

If $M$ has descent data from $K'$ to $K$, this is actually a representation of $G_{K_\infty}$ because $G_{K_\infty}$ acts on both $\hatOEnr$ and $M$ (the latter through its quotient $\Gal(K_\infty'/K_\infty)\cong \Gal(K'/K)$).  
Equivalently, since $[K':K]$ is prime to $p$  we have that $M^0:=M^{\Gal(K'/K)}$ is an \'etale $\varphi$-module with $A$-coefficients for $K$, and the natural map
\[  \gS[1/u]\otimes_{\gS^0[1/v]}M^0\rightarrow M  \]
is an isomorphism. Write
\[ T_K(M^0) := \left(\hatOEnr \otimes_{\mathfrak{S}^0 [1/v]}M^0 \right)^{\varphi=1}\]
for the analogous functor on weak \'etale $\varphi$-modules with $A$ coefficients for $K$, without descent data. Then $T_K(M^0) \cong T(M)$ and the right-hand side becomes a representation of $G_{K_\infty}$ by transport of structure.
\end{defn}

\begin{defn}
There are further (contravariant) functors $T^\ast$ on weak \'etale $\varphi$-modules with $A$-coefficients (with descent data from $K'$ to $K$, or for $K$, respectively), defined as follows.
\begin{align*}
 T^\ast(M) & := \Hom_{\mathfrak{S}[1/u],\varphi}\left(M,\widehat{\cE^{\nr}}/\hatOEnr \right) \\
 T^\ast_K(M^0) & := \Hom_{\mathfrak{S}^0[1/v],\varphi}\left(M^0,\widehat{\cE^{\nr}}/\hatOEnr \right) .
 \end{align*}
 These are naturally an $A$-linear representation of~$G_{K_\infty}$, and when $M^0 = M^{\Gal(K'/K)}$ then evidently $T^\ast(M) \cong T^\ast_K(M^0)$. When pre-composed with the functor from Breuil--Kisin modules to \'etale $\varphi$-modules, $T_K^\ast$ is the functor used in \cite{gls12} (and denoted $T_{\mathfrak{S}}$ in \S3 of that reference).
    \end{defn}

If $A$ is a $\Z/p^a\Z$-algebra and $\mathfrak{M}$ is a Breuil--Kisin module with $A$-coefficients and descent data, so that $\mathfrak{M}[1/u]$ is an \'etale $\varphi$-module with $A$-coefficients and descent data, then we will freely write
\[  T(\mathfrak{M}):=T(\mathfrak{M}[1/u]),\qquad T^\ast(\mathfrak{M}):=T^\ast(\mathfrak{M}[1/u]) .   \]

%


\begin{lemma}\label{lem:gal-rep-comparison} Suppose that $A$ is a finite extension of $\F_p$ and $M$ is an \'etale $\varphi$-module with $A$-coefficients and descent data, and set $M^0 = M^{\Gal(K'/K)}$.  Let 
\begin{align*}
M^\vee & :=  \Hom_{\gS_A[1/u]}(M,\gS_A[1/u])\\
(M^0)^\vee & :=  \Hom_{\gS^0_A[1/v]}(M^0,\gS^0_A[1/v])
\end{align*}
be the {\upshape (}$A$-linear{\upshape)} dual $\varphi$-modules with coefficients {\upshape(}and descent data, in the first case{\upshape)}.  Then we have a functorial isomorphism
\[ T(M) \cong T^\ast_K((M^0)^\vee) \]
as representations of $G_{K_\infty}$.  
\end{lemma}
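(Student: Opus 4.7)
The first isomorphism $T(M)\cong T_K(M^0)$ is already built into the definition of $T$ --- it is the transport of structure under the isomorphism $\gS[1/u]\otimes_{\gS^0[1/v]} M^0 \cong M$ --- so my plan is to reduce to producing a natural $G_{K_\infty}$-equivariant isomorphism $T_K(M^0)\cong T^\ast_K((M^0)^\vee)$.

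First I would simplify by noting that since $A$ is annihilated by $p$, so are $M^0$ and $(M^0)^\vee$, and hence in the definition of $T_K$ I can replace $\hatOEnr$ by $F^{\sep} := \hatOEnr/p$ (a separable closure of $k'(\!(u)\!)$, on which $G_{K_\infty}$ acts through the field-of-norms equivalence), while in the definition of $T^\ast_K$ I can replace $\hatEnr/\hatOEnr$ by its $p$-torsion subgroup, canonically and equivariantly identified with $F^{\sep}$ via multiplication by $p$.

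Next I would use the canonical evaluation pairing $(M^0)^\vee \otimes_{\gS^0_A[1/v]} M^0 \to \gS^0_A[1/v]$, which is $\varphi$-equivariant by the very definition of the $\varphi$-action on $(M^0)^\vee$. Writing an element of $M^0\otimes_{\gS^0[1/v]} F^{\sep}$ as $\xi = \sum_j m_j \otimes x_j$, I would define $f_\xi\colon (M^0)^\vee \to F^{\sep}$ by $\ell\mapsto \sum_j \ell(m_j)\,x_j$. When $\xi$ is $\varphi$-invariant, the $\varphi$-equivariance of the pairing forces $f_\xi$ to commute with $\varphi$, so $\xi\mapsto f_\xi$ yields a well-defined $A$-linear map $T_K(M^0)\to T^\ast_K((M^0)^\vee)$; this map is $G_{K_\infty}$-equivariant automatically, since the Galois action on both sides is entirely through the action on $F^{\sep}$.

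Finally, to check the map is an isomorphism I would use projectivity of $M^0$: Zariski-locally, pick a basis $e_1,\dots,e_n$ of $M^0$ with dual basis $e_1^\ast,\dots,e_n^\ast$ of $(M^0)^\vee$, and let $\Phi$ denote the matrix of the linearization of $\varphi_{M^0}$. Then $T_K(M^0)$ and $T^\ast_K((M^0)^\vee)$ both get identified with the $A$-module of tuples $(x_j)\in (F^{\sep})^n$ satisfying the single equation $\Phi\cdot \varphi(x)=x$ (on the dual side one uses that $\varphi$ acts on the dual basis through $(\Phi^{-1})^T$, and the resulting condition is the same). The main thing to verify carefully is simply the bookkeeping for the $\varphi$-action on the dual module and its compatibility with the evaluation pairing; I expect no conceptual difficulty beyond this, as the statement amounts to the standard duality between covariant and contravariant versions of the field-of-norms equivalence.
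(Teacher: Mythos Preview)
Your proposal is correct and follows essentially the same approach as the paper. The paper reduces instead to showing $T(M^\vee)\cong T^\ast(M)$ (using $(M^0)^\vee\cong(M^\vee)^0$), and then invokes the natural tensor--hom isomorphism $\hatEnr/\hatOEnr\otimes_{\cO_{\cE}} M^\vee\cong\Hom_{\cO_{\cE}}(M,\hatEnr/\hatOEnr)$ before taking $\varphi$-invariants; your evaluation-pairing map $\xi\mapsto f_\xi$ is exactly this isomorphism written out, and your basis computation is a concrete verification of what the paper leaves as ``natural isomorphism''.
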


\begin{remark}\label{rem:frob-dual}
    The Frobenius on $M^\vee$ is defined by the formula $(\varphi f)(\sum_{i} s_i \varphi(m_i)) = \sum_i s_i \varphi(f(m_i))$ for any $s_i \in \gS_A[1/u]$ and $m_i \in M$;\ and similarly for $(M^0)^\vee$.
\end{remark}

\begin{proof}
Since $M$ is a finite projective $A \otimes \cO_{\cE}=\gS_A[1/u]$-module, we have $M\cong (M^\vee)^\vee$ and $(M^0)^\vee \cong (M^\vee)^0$. By the  discussion in the preceding paragraphs, we can therefore  reduce to proving that \[ T(M^\vee) \cong T^\ast(M).\]  Furthermore, we have a natural isomorphism
\begin{align}\label{eq:take-phi-invts-of-this}
     \widehat{\cE^{\nr}}/\hatOEnr \otimes_{\cO_{\cE}}M^\vee\xrightarrow\sim \Hom_{\cO_{\cE}}(M,\widehat{\cE^{\nr}}/\hatOEnr)
\end{align} 
which is $\varphi$- and $G_{K_\infty}$-equivariant.  Since $M$ (and hence $M^\vee$) is $p$-torsion, the left side of \eqref{eq:take-phi-invts-of-this} is isomorphic to
\[  \hatOEnr \otimes_{\cO_{\cE}} M^\vee  \]
and its $\varphi$-invariant subspace is simply $T({M}^\vee)$.  On the other hand, the $\varphi$-invariants of the right side of \eqref{eq:take-phi-invts-of-this} are $T^\ast(M)$ by definition, so we are done.
\end{proof}

\section{Points of \texorpdfstring{$\cC^{\tau}(J)$}{C\^{}tau(J)} and \texorpdfstring{$\cZ^{\tau}(J)$}{Z\^{}tau(J)}}\label{sec:the-setup}

In \cite{cegsC} the irreducible components of the stack $\cC^{\tau,\BT,1}$ are studied by analyzing a morphism from $\cC^{\tau,\BT}$ to a certain auxiliary stack $\mathcal{G}_\eta$ (the ``gauge stack''). Our goal in the first part of this section is to explain another approach to describing the components of $\cC^{\tau,\BT,1}$, in terms of the notion of \emph{shape}, and then to relate this description to the one from \cite{cegsC}. The advantage of our approach is that we are able to give a complete description of all of the points of each component of $\cC^{\tau,\BT,1}$, whereas  \cite{cegsC} only describes a dense set of points;\ see Corollary~\ref{cor:partial-frob-C(J)}. As an application, in Section~\ref{sec:finite-type-z} we are able to characterize the $\F'$-valued points of the stacks $\cZ^\tau(J)$ for each finite extension $\F'/\F$;\ see Theorem~\ref{prop:modified-basis-of-invariants}.


\subsection{Irreducible components via shape}\label{sec:shape}

Shapes for rank $2$ Breuil--Kisin modules with tame descent were introduced by Breuil in \cite{breuillatticconj}, further developed by \cite{CDMa} to study tamely Barsotti-Tate deformation rings (and called \emph{genre} there), and eventually generalized to higher dimensions in \cite{LLLM} and \cite{LLLM-models}. Each field-valued point of $\mathcal{C}^{\tau,\BT,1}$ has an associated shape  which describes the divisibility by $u$ of certain entries in the matrices of the partial Frobenius maps. 

  Throughout this section we fix a non-scalar tame inertial type $\tau = \eta \oplus \eta'$. We will need the following notation. For each~$i$, we let $k_i, k'_i \in [0, p^{f'} - 1)$ be such that  $\eta = (\widetilde{\omega}'_i)^{k_i}$ and $\eta' = (\widetilde{\omega}'_i)^{k'_i}$. 
Let $\gamma_i \in [0, p-1]$ be the unique integers such that
\[  \eta \eta^{\prime -1} = \prod_{i \in \Z/f'\Z} (\widetilde{\omega}'_i)^{\gamma_i}. \]
Note that we are implicitly considering $\eta$ and $\eta'$ as an \emph{ordered} pair of characters.  The formula
\begin{equation}\label{eq:useful-k-equation} p[k'_{i-1} - k_{i-1}] - [k'_i - k_i] = (p^{f'}-1)(p-1-\gamma_i)\end{equation}
is often useful, where for any $a \in \Z/f'\Z$, $[a]$ denotes the unique element of $[0, p^{f'}-1)$ such that $[a]$ is congruent to $a$ mod $p^{f'}-1$. For brevity we will shorten $[k_i - k'_i]$ and $[k'_i - k_i]$ to $\ell_i,\ell'_i$ respectively. Since the type $\tau$ is nonscalar, $\ell_i$ and $\ell'_i$ are always both nonzero, and we have $\ell_i + \ell'_i = p^{f'}-1$.


\begin{defn}\label{def:eigenbasis}
Suppose that $A$ is an $\F$-algebra and let $\gM$ be an object of $\cC^{\tau,\BT,1}(A)$. By Lemma~\ref{lem:zariski-locally-free} and the hypothesis that $I(K'/K)$ has order prime to $p$, Zariski locally on $A$ one can choose a  basis $\beta_i = ({\eb}_i, {\fb}_i)$ of $\gM_i$ such that $\Gal(K'/K)$ acts on ${\eb}_i,{\fb}_i$ via $\eta,\eta'$ respectively. Furthermore, in the cuspidal case we can and do  suppose that $\mathfrak{c}({\eb}_i) = {\fb}_{i+f}$ and $\mathfrak{c}({\fb}_i) = {\eb}_{i+f}$. As in \cite{LLLM}, we call $\beta = (\beta_i)_{i\in \Z/f'\Z}$ an \emph{eigenbasis} of $\gM$. 
\end{defn}

\begin{defn}
    If the \'etale $\varphi$-module $M$ is free and has basis $\beta = (\beta_i)$, then  \emph{the matrix of the partial Frobenius map} $\Phi_{M,i}$ with respect to the basis $\beta$ is the matrix of $\Phi_{M,i}$ with respect to the basis $(1\otimes \beta_{i-1})$ of $\varphi^* M_{i-1}$ and the basis $\beta_i$ of $M_i$.  We will also use the same terminology in the context of
    a Breuil--Kisin module $\gM$ and a basis for $\gM[1/u]$ (which need not be a basis for $\gM$).
\end{defn}

Suppose that $\gM$ has an eigenbasis $\beta$, and let $C_{\beta,i}$ denote the matrix of the partial Frobenius map $\Phi_{\gM,i}$ with respect to $\beta$.  Since $\Phi_{\gM,i}$ commutes with the descent data we find that 
\begin{equation}\label{eq:Cbeta} C_{\beta,i} = \begin{pmatrix*}[r]
a_i & u^{\ell'_i} b_i \\
u^{\ell_i} c_i & d_i
\end{pmatrix*}  
\end{equation}
for some $a_i, b_i, c_i, d_i \in A[\![v]\!]$, meaning that $\Phi_{\gM,i}(1 \otimes \eb_{i-1}) = a_i \eb_i + u^{\ell_i} c_i \fb_i$ and similarly for 
$\Phi_{\gM,i}(1 \otimes \fb_{i-1})$. In the cuspidal case we  additionally  compute that 
\begin{equation}\label{eq:cuspdial-C-matrix}
C_{\beta,i+f} = \Ad \begin{pmatrix}
0 & 1 \\
1 & 0
\end{pmatrix}(C_{\beta,i})
\end{equation}
where $\Ad A(B) = ABA^{-1}$.

Any change of basis from an eigenbasis $\beta$ to another eigenbasis $\beta'$ is similarly encoded by an $f'$-tuple of matrices of the form
\[U_i = \begin{pmatrix*}[r]
x_i & u^{\ell'_i} y_i\\
u^{\ell_i} z_i & w_i 
\end{pmatrix*}\] so that $\beta'_i = \beta_i \cdot U_i$. The matrix $U_i$ has  $x_i,y_i,z_i,w_i$ in  $A[\![v]\!]$ and determinant in $A[\![v]\!]^{\times}$,  and in the cuspidal case one again has 
\[ U_{i+f} = \Ad \begin{pmatrix}
0 & 1 \\
1 & 0
\end{pmatrix}(U_i).\]
Under this change of basis, the matrices $C_{\beta,i}$ change by the formula
\[C_{\beta',i} = U_i^{-1} \cdot C_{\beta,i} \cdot \varphi(U_{i-1}).
\] 

Observe that the matrices $U_i$ are diagonal modulo $u$, so that the diagonal entries $x_i,w_i$ must be units in $A[\![v]\!]$. In particular whether or not the entries $a_i,d_i$ of $C_{\beta,i}$ are divisible by $v$ is unchanged under change of eigenbasis. We can therefore make the following definition.


\begin{defn}
Suppose that $\gM$ has an eigenbasis. We define the \emph{shape of $\gM$ at $i$} to be
\begin{itemize}
\item $\txI_{\eta}$ if $v \mid a_i$ and $v \nmid d_i$,
\item $\txI_{\eta'}$ if $v \nmid a_i$ and $v \mid d_i$, and
\item $\txII$ if  $v \mid a_i, d_i$ both.
\end{itemize}
The argument in Lemma~\ref{lem:L-cover-C} below proves that if $\gM$ has an eigenbasis and $A$ is a domain, then $\gM$ has a shape;\ but in general $v$ may divide neither $a_i$ nor $d_i$, in which case the shape of $\gM$ at $i$ does not exist. The \emph{shape of $\gM$}, if it exists, is the $f'$-tuple whose $i$-th entry is the shape of $\gM$ at $i$. 

In the cuspidal case we observe from \eqref{eq:cuspdial-C-matrix} that the shape of $\gM$ at $i$ is $\txI_{\eta}$ if and only if the shape of $\gM$ at $i+f$ is $\txI_{\eta'}$, and the shape of $\gM$ at $i$ is $\txII$ if and only if the shape of $\gM$ at $i+f$ is also $\txII$.
\end{defn}

We remark that having shape $\txI_{\eta}$ or $\txII$ (Zariski locally) at $i$ is a closed condition, and similarly for having shape $\txI_{\eta'}$ or $\txII$  at $i$;\ and their intersection is the condition of having shape $\txII$ at $i$. Having shape $\txI_\eta$ (respectively $\txI_{\eta'}$) is not a closed condition, but it is locally closed. 

\begin{defn} Let $J \subset \Z/f'\Z$ be a profile.
    We define $\cL^{\tau}(J) \subset \cC^{\tau,\BT,1}$ to be the substack such that an object $\gM \in \cC^{\tau,\BT,1}(A)$ lies in $\cL^{\tau}(J)(A)$ if and only if Zariski locally on $A$ the Breuil--Kisin module $\gM$ has shape $\txI_\eta$ or $\txII$ when $i \in J$, and shape $\txI_{\eta'}$ or $\txII$ when $i \not\in J$. The inclusion $\cL^{\tau}(J) \subset \cC^{\tau,\BT,1}$ is a closed immersion (since the condition of being a closed immersion is checkable locally on $A$).
\end{defn}

\begin{lemma}\label{lem:L-cover-C}
    We have $|\cC^{\tau,\BT,1}| = \bigcup_J |\cL^{\tau}(J)|$, the union taken over profiles $J$.
\end{lemma}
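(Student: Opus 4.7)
The plan is to verify the covering statement pointwise on the underlying topological space of $\cC^{\tau,\BT,1}$. Since passing to a finite extension of the residue field does not change the image point, I reduce to showing that every $\gM \in \cC^{\tau,\BT,1}(\F')$ factors through $\cL^\tau(J)(\F')$ for some profile $J$, where $\F'/\F$ may be taken sufficiently large. By Lemma~\ref{lem:zariski-locally-free} and the fact that $|I(K'/K)| = e'$ is coprime to $p$, $\gM$ admits a global eigenbasis $\beta$ in the sense of Definition~\ref{def:eigenbasis}, with associated partial Frobenius matrices $C_{\beta,i}$ as in \eqref{eq:Cbeta} and entries $a_i, b_i, c_i, d_i \in \F'[\![v]\!]$.

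The crux is to show that $\gM$ has a well-defined shape at each $i$, and this is where the strong determinant condition enters. By Proposition~\ref{prop:reduced-f-strong-det}, I have $\det(C_{\beta,i}) \in u^{e'}\cdot \F'[\![u]\!]^\times$. A direct computation gives $\det(C_{\beta,i}) = a_i d_i - u^{\ell_i + \ell'_i} b_i c_i = a_i d_i - v \cdot b_i c_i$, using $\ell_i + \ell'_i = p^{f'}-1 = e'$ and $v = u^{e'}$. Since this expression lies in $\F'[\![v]\!] \subset \F'[\![u]\!]$, the unit factor $u^{-e'}\det(C_{\beta,i})$ actually lies in $\F'[\![v]\!]^\times$, and hence $a_i d_i \in v\cdot \F'[\![v]\!]$. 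As $\F'[\![v]\!]$ is a discrete valuation ring with uniformizer $v$, it follows that $v \mid a_i$ or $v \mid d_i$ (or both), so $\gM$ has a shape at $i$.

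To finish, I assemble these pointwise shapes into a profile. In the principal series case, any subset of $\Z/f\Z$ qualifies, and I take $J := \{ i : v \mid a_i \}$;\ then $\gM \in \cL^\tau(J)(\F')$ by construction. In the cuspidal case, the relation \eqref{eq:cuspdial-C-matrix} implies that passing from index $i$ to $i+f$ swaps the diagonal entries of $C_{\beta,i}$, so shape $\txI_\eta$ at $i$ corresponds to shape $\txI_{\eta'}$ at $i+f$ while shape $\txII$ is preserved. For indices of shape $\txI_\eta$ or $\txI_{\eta'}$, the cuspidal profile condition $i \in J \iff i+f \notin J$ then holds automatically from the rule $i \in J \iff v \mid a_i$;\ for indices of shape $\txII$ (where both $v\mid a_i$ and $v\mid d_i$) the definition of $\cL^\tau(J)$ permits either choice, so I simply declare $i \in J$ and $i+f \notin J$. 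I expect the main subtlety to be the bookkeeping in the first step, namely carefully transporting the strong determinant condition between the overrings $\F'[\![v]\!] \subset \F'[\![u]\!]$;\ the cuspidal combinatorics and the DVR argument are then direct.
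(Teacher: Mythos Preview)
Your proof is correct and follows essentially the same approach as the paper: invoke Proposition~\ref{prop:reduced-f-strong-det} over a field to obtain $v \mid a_i d_i$ in the DVR $A[\![v]\!]$, conclude that the shape exists at each $i$, and then assemble a compatible profile~$J$. The paper's version simply works over an arbitrary field $A$ (your DVR argument applies verbatim in that generality, so the reduction to finite extensions of $\F$ is unnecessary) and leaves the cuspidal profile bookkeeping implicit, but the arguments are otherwise identical.
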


\begin{proof}
    Suppose that $A$ is a field and $\gM \in \cC^{\tau,\BT,1}(A)$. The Breuil--Kisin module $\gM$ has an eigenbasis $\beta$, and Proposition~\ref{prop:reduced-f-strong-det} implies $\det C_{\beta,i} \in v A[\![u]\!]^{\times}$ for each $i$. Comparing with \eqref{eq:Cbeta} and recalling that $u^{\ell_i} \cdot u^{\ell'_i} = v$ we find in particular that $v \mid a_i d_i$ as elements of $A[\![v]\!]$. We conclude that either $v \mid a_i$ or $v \mid d_i$. Therefore $\gM$ has a shape, and lies in $\cL^{\tau}(J)(A)$ for some $J$ (possibly more than one).
\end{proof}

In some computations (such as the one upcoming), rather than working with the matrices of $\Phi_{\gM,i}$ with respect to an eigenbasis, it is more convenient to write the matrices of  $\Phi_{\gM,i}$ in terms of bases for the $\eta'$-eigenspaces of $\varphi^*\gM_{i-1}$ and $\gM_i$. Concretely, if $\beta = (\beta_i)$ is an eigenbasis with $\beta_i = (\eb_i,\fb_i)$, then the $\eta'$-eigenspace of $\gM_i$ has a basis given by $(u^{\ell'_i}{\eb}_i, {\fb}_i )$, while that of $\varphi^{*} \gM_{i-1}$ has a basis given by $( u^{\ell'_i} \otimes {\eb}_{i-1}, 1 \otimes {\fb}_{i-1})$. The matrix of $\Phi_{\gM,i}$ in terms of these bases is 
 \begin{equation}\label{eq:def-of-Abeta}  A_{\beta,i} := \Ad \begin{pmatrix}
u^{-\ell'_i} & 0 \\
0 & 1
\end{pmatrix} \left( C_{\beta,i}\right) = \begin{pmatrix*}[r]
a_i & b_i \\
v c_i & d_i
\end{pmatrix*}  . \end{equation}
Note that this is not the same as ``the matrix with respect to the basis $(u^{\ell'_i}e_i,f_i)_{i}$'', since $1\otimes u^{\ell'_{i-1}}e_i \neq u^{\ell'_i}\otimes e_{i-1}$ in general.
These 
matrices have entries in $A[\![v]\!]$; for that reason, this process is sometimes called ``removing the descent data.'' In the cuspidal case, one checks using $\ell'_i + \ell'_{i+f} = p^{f'}-1$ that
 \begin{equation}\label{eq:A-cuspidal}  A_{\beta,i+f} := \Ad \begin{pmatrix}
0 & 1 \\
v & 0
\end{pmatrix} A_{\beta,i} = \begin{pmatrix*}[r]
d_i & c_i \\
v b_i & a_i
\end{pmatrix*}  .
\end{equation}


Setting $I_i = \Ad\begin{pmatrix}
u^{-\ell'_i} & 0 \\
0 & 1
\end{pmatrix} \left(U_i \right) = \begin{pmatrix*}[r]
x_i & y_i \\
vz_i & w_i
\end{pmatrix*}$, we get the change of basis formula
\begin{equation}\label{eq:cofb}
    A_{\beta',i} = I_{i}^{-1} A_{\beta,i} \; \text{Ad} \begin{pmatrix}
    v^{p-1-\gamma_{i}} & 0 \\
    0 & 1
    \end{pmatrix} (\varphi(I_{i-1})\!).
\end{equation}
using \eqref{eq:useful-k-equation}. In the cuspidal case the matrices $I_i$ and $I_{i+f}$ are related by the formula
\begin{equation}\label{eq:I-cuspidal}  I_{i+f} := \Ad \begin{pmatrix}
0 & 1 \\
v & 0
\end{pmatrix} I_{i} = \begin{pmatrix*}[r]
w_i & z_i \\
v y_i & x_i
\end{pmatrix*}  .
\end{equation}


Let $L^+ \GL_2$ denote the positive loop group over $\F$ with variable $v$, i.e., the group ind-scheme whose $A$-points are $L^+ \GL_2(A) = \GL_2(A[\![v]\!])$. Write $\cC^{\tau,1}$ for the $\F$-stack of rank $2$ Breuil--Kisin modules with coefficients and descent data, of type $\tau$, and of height at most $1$;\ in other words, all of the defining conditions of $\cC^{\tau,\BT,1}$ except the strong determinant condition. There is a map
\begin{equation}\label{eq:cover-unfactored}
 (L^+ \GL_2)^{f} \rightarrow \cC^{\tau,1}
\end{equation}
given by sending the $f$-tuple of matrices $B_i \in \GL_2(A)$ for $0 \le i < f$ to the Breuil--Kisin module $\gM$ with 
$\gM_i = A[\![u]\!]{\eb}_i \oplus A[\![u]\!]{\fb}_i$ for $i \in \Z/f'\Z$,  an eigenbasis $\beta$ with $\beta_i = (\eb_i,\fb_i)$ for $i \in \Z/f'\Z$, and partial Frobenius maps $\Phi_{\gM,i}$ having matrices $A_{\beta,i}$ for $0 \le i < f$ defined by the formulas
\begin{equation}\label{eq:a-beta-i-map} A_{\beta,i} = 
\begin{dcases}
B_i \begin{pmatrix}
v & 0 \\
0 & 1
\end{pmatrix} &\text{ if } i \in J\\
\begin{pmatrix}
1 & 0\\
0 & v
\end{pmatrix} B_i &\text{ if } i \not\in J.
\end{dcases}
\end{equation}
In the cuspidal case this means that the matrices $A_{\beta,i+f}$ for $0 \le i < f$ are determined in terms of $A_{\beta,i}$ by the formula \eqref{eq:A-cuspidal}.

\begin{lemma}
    The map of \eqref{eq:cover-unfactored} factors through the closed immersion $\cL^{\tau}(J) \subset \cC^{\tau,1}$ to  give a surjective map
    \[ \pi :  (L^+ \GL_2)^{f} \rightarrow \cL^{\tau}(J).\]
    Furthermore $\cL^{\tau}(J)$ is reduced and irreducible.
\end{lemma}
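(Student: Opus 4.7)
The plan has three main steps: check that the map factors through $\cL^\tau(J)$, verify surjectivity, and deduce reducedness and irreducibility from the corresponding properties of the source $(L^+\GL_2)^f$.

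For the factoring, I would first inspect \eqref{eq:a-beta-i-map}: when $i \in J$ the $(1,1)$-entry of $A_{\beta,i}$ is divisible by $v$, giving shape $\txI_\eta$ or $\txII$ at $i$; analogously for $i \notin J$ the $(2,2)$-entry is divisible by $v$, giving shape $\txI_{\eta'}$ or $\txII$. Thus the image Breuil--Kisin module satisfies the shape conditions defining $\cL^\tau(J)$. For the strong determinant condition, observe $\det A_{\beta,i} = v \det B_i = u^{e'} \det B_i$ with $\det B_i \in A[\![v]\!]^\times$; this yields the strong determinant condition over reduced $A$ by Proposition~\ref{prop:reduced-f-strong-det}, and extends to arbitrary $A$ because $(L^+\GL_2)^f$ is pro-smooth over $\F$ and the strong determinant locus is closed in $\cC^{\tau,1}$.

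For surjectivity, let $\gM \in \cL^\tau(J)(A)$. Using Lemma~\ref{lem:zariski-locally-free} combined with the $I(K'/K)$-eigenspace decomposition (available since $\#I(K'/K)$ is prime to $p$), one may choose an eigenbasis $\beta$ Zariski-locally on $\Spec A$, also arranging for the cuspidal constraint when applicable. The shape condition then forces $v \mid a_i$ for $i \in J$ and $v \mid d_i$ for $i \notin J$, so one may uniquely define $B_i$ by $A_{\beta,i} = B_i \cdot \mathrm{diag}(v,1)$ (for $i \in J$) or $A_{\beta,i} = \mathrm{diag}(1,v) \cdot B_i$ (for $i \notin J$), with entries in $A[\![v]\!]$. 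The strong determinant condition on $\gM$ forces $\det A_{\beta,i} \in v \cdot A[\![v]\!]^\times$ by Proposition~\ref{prop:reduced-f-strong-det}, whence $\det B_i \in A[\![v]\!]^\times$ and $B_i \in \GL_2(A[\![v]\!])$, exhibiting $\gM$ in the essential image of $\pi(A)$ Zariski-locally.

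Finally, $(L^+\GL_2)^f = \varprojlim_n (L^n\GL_2)^f$ is the inverse limit of smooth irreducible schemes, hence pro-smooth (so reduced) and irreducible. The morphism $\pi$ is representable and smooth: its fibers parametrize choices of eigenbasis (compatible with the factorization) on a fixed BK module, which form a torsor under a pro-smooth group of diagonal change-of-basis transformations, adapted in the cuspidal case by \eqref{eq:I-cuspidal}. A smooth surjection from a reduced, irreducible source transfers reducedness and irreducibility to the target $\cL^\tau(J)$. The main anticipated obstacle is the careful bookkeeping around the strong determinant condition over non-reduced test rings (where Proposition~\ref{prop:reduced-f-strong-det} is only one direction) and the precise identification of the change-of-basis torsor group for $\pi$, particularly in the cuspidal case where the descent-data identifications linking $\beta_i$ to $\beta_{i+f}$ constrain the allowed change-of-basis matrices.
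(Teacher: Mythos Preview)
Your factoring and surjectivity arguments match the paper's. For reducedness, however, the paper takes a more direct route than your smoothness/torsor argument. Observe that your ``surjectivity'' paragraph actually proves something stronger than surjectivity on points: Zariski-locally on any $\Spec A$, every object of $\cL^\tau(J)(A)$ lifts through $\pi$. The paper uses precisely this: since $(L^+\GL_2)^f$ is reduced, $\pi$ already factors through $\cL^\tau(J)_{\red}$, and hence \emph{every} morphism $X \to \cL^\tau(J)$ factors Zariski-locally through $\pi$ and therefore through $\cL^\tau(J)_{\red}$, forcing $\cL^\tau(J) = \cL^\tau(J)_{\red}$. This avoids having to make sense of ``smooth'' for a morphism out of a non-finite-type ind-scheme and sidesteps the torsor identification entirely; since establishing those would in any case rest on the same local-lifting statement you already have, the detour through smoothness buys nothing. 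The obstacle you flag about Proposition~\ref{prop:reduced-f-strong-det} over non-reduced $A$ is present in both approaches: the paper's proof, like yours, asserts the lift over arbitrary $A$ without re-verifying that $\det B_i \in A[\![v]\!]^\times$ in that generality.
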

\begin{proof}
Since the loop group $L^+ \GL_2$ is reduced and the determinant of each matrix $A_{\beta,i}$ in \eqref{eq:a-beta-i-map} lies in $v A[\![v[\!]^\times$, it follows from Proposition~\ref{prop:reduced-f-strong-det} that the map \eqref{eq:cover-unfactored} factors through the closed immersion $\cC^{\tau,\BT,1} \subset \cC^{\tau,1}$. 
Since by construction the upper-left entry of $A_{\beta,i}$ is divisible by $v$ if $i \in J$, and similarly for the bottom-right entry if $i \not\in J$, the image in fact lies in $\cL^{\tau}(J)$. We therefore obtain the claimed factorization $\pi$.  In fact
since $L^+\GL_2$ is reduced,  by 
\cite[\href{https://stacks.math.columbia.edu/tag/050B}{Tag 050B}]{stacks-project} the map $\pi$ must factor through the closed immersion $\cL^{\tau}(J)_{\red} \subset \cL^{\tau}(J)$;\ here $\cL^{\tau}(J)_{\red}$ denotes the underlying reduced substack of $\cL^{\tau}(J)$.

Let $A$ be a ring and suppose that $\gM \in \cL^\tau(J)(A)$ admits an eigenbasis $\beta$. For each $i \in J$ we have $v \mid a_i$, say $a_i = v a'_i$, and $A_{\beta,i}$ has the factorization
\[ A_{\beta,i} = B_i \begin{pmatrix}
v & 0 \\
0 & 1
\end{pmatrix}
\] with $B_i = \begin{pmatrix}
a'_i & b_i \\
c_i & d_i
\end{pmatrix}$;\ and analogously when $i \not\in J$. It follows that   $\gM$  is in the image of $\pi$.   When $A$ is a field (or indeed any local ring), every $\gM \in \cL^{\tau}(J)(A)$ admits an eigenbasis, and therefore is in the image of $\pi$;\ this establishes the surjectivity of $\pi$. The irreducibility of $\cL^{\tau}(J)$ now follows  from the surjectivity of $\pi$ and the irreducibility of $(L^+\GL_2)^f$. 


Now  consider any morphism $X \to \cL^{\tau}(J)$ with $X$ a scheme over $\F$. Cover~$X$ by affine opens $\Spec A$ such that the induced object of $\cL^{\tau}(J)(A)$ admits an eigenbasis. 
By the argument in the previous paragraph,  each of the maps $\Spec A \to \cL^\tau(J)$ lifts through $\pi$, and therefore factors through $\cL^{\tau}(J)_{\red}\subset \cL^{\tau}(J)$. It follows that the morphism $X \to \cL^{\tau}(J)$ itself factors through $\cL^{\tau}(J)_{\red}$. Thus $\cL^{\tau}(J)(X) = \cL^{\tau}(J)_{\red}(X)$ for all schemes $X$;\ it follows that $\cL^{\tau}(J) = \cL^{\tau}(J)_{\red}$ and therefore $\cL^\tau(J)$ is reduced.
\end{proof}

\begin{thm}\label{thm:Ltau=Ctau}
    We have $\cL^\tau(J) = \cC^{\tau}(J)$ for all profiles $J$.
\end{thm}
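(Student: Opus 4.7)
The plan is to identify each irreducible substack $\cL^\tau(J)$ with one of the irreducible components $\cC^\tau(J')$ of $\cC^{\tau,\BT,1}$, and then to verify that the parametrization matches on the nose, namely $J' = J$.

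First I would argue that each $\cL^\tau(J)$ is itself an irreducible component. Since $\cL^\tau(J)$ is closed and irreducible in $\cC^{\tau,\BT,1}$, it is contained in a unique irreducible component, say $\cC^\tau(J')$, by Theorem~\ref{thm:stacks-results}(4). Conversely, Lemma~\ref{lem:L-cover-C} says that every point of $\cC^{\tau,\BT,1}$ lies in some $\cL^\tau(J)$; in particular the generic point of each component $\cC^\tau(J')$ lies in some $\cL^\tau(J)$, and since $\cL^\tau(J)$ is closed this forces $\cC^\tau(J') \subseteq \cL^\tau(J)$. Together, the two inclusions yield $\cL^\tau(J) = \cC^\tau(J')$, giving a map $J \mapsto J'$ from profiles to components which is surjective, and then bijective by cardinality ($2^f$ on each side). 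Injectivity can alternatively be seen directly: over a field, the shape at each $i$ is uniquely determined, and (for a $\BT$ module) the strong determinant condition excludes shape $\txII$ because $v^2 \mid a_i d_i$ would contradict $\det C_{\beta,i} \in v \cdot A[\![u]\!]^\times$, so the $\cL^\tau(J)(\F')$ are disjoint as $J$ varies.

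It then remains to check that the resulting bijection is the identity. Here I would take a concrete $\Fpbar$-valued point of $\cL^\tau(J)$ coming from the surjection $\pi$ --- for instance, the one obtained by taking all matrices $B_i$ in \eqref{eq:a-beta-i-map} to be the identity --- compute its associated mod-$p$ Galois representation via the functor $T$ of Section~\ref{sec:galois-representations}, and compare with the characterization of $\cC^\tau(J)$ in terms of Serre weights (combining Theorem~\ref{lem: C 1 and Z 1 are the underlying reduced substacks}(4) and Theorem~\ref{thm:cegs-z-tau-j}). Matching the Serre weight $\sigmabar(\tau)_J$ on the nose then forces $J' = J$.

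The main obstacle is this final indexing verification: the shape description of $\cL^\tau(J)$ is quite symmetric in $i \in \Z/f'\Z$, whereas the description of $\cC^\tau(J)$ via Serre weights involves the more intricate combinatorial quantities $s_{J,i}$, $t_{J,i}$, $\Theta_J$ of \eqref{def:s_J}--\eqref{def:theta_J}. Tracking conventions carefully enough to confirm the match --- or reducing to a small collection of model computations via the Galois-theoretic dictionary, and separately handling the principal series versus cuspidal cases in view of \eqref{eq:A-cuspidal} --- is where the real technical work lies.
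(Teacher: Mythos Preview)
Your opening argument that each $\cL^\tau(J)$ coincides with some irreducible component of $\cC^{\tau,\BT,1}$ is correct and matches the paper's. One aside is wrong, though: shape~$\txII$ is \emph{not} excluded by the determinant condition. From \eqref{eq:Cbeta} one has $\det C_{\beta,i} = a_i d_i - v b_i c_i$, so if $v\mid a_i$ and $v\mid d_i$ then the determinant still lies in $vA[\![v]\!]^\times$ whenever $b_i c_i$ is a unit. Shape-$\txII$ points genuinely exist and lie in several $\cL^\tau(J)$ at once; indeed this overlap is what drives the ``shape-shifting'' argument of Section~\ref{sec:geography}. This does not damage your main line, since injectivity already follows from the cardinality count, but the disjointness claim should be dropped.

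The real gap is in the indexing step. Your proposed route---compute a test Galois representation and match it against the Serre-weight description via Theorems~\ref{lem: C 1 and Z 1 are the underlying reduced substacks}(4) and~\ref{thm:cegs-z-tau-j}---only applies when $J\in\cP_\tau$, since $\sigmabar(\tau)_J$ is undefined otherwise. For the vertical components ($J\notin\cP_\tau$) no Galois-side characterization of $\cC^\tau(J)$ is available at this point in the paper (supplying one is in fact the paper's goal), so nothing rules out a nontrivial permutation of~$\cP_\tau^c$. The paper avoids this entirely: it invokes \cite[Prop.~5.4.2]{cegsC}, which identifies $\bigcup_{J\ni i}\cC^\tau(J)$ with the vanishing locus of the $\eta$-isotypic part of $\overline{\Phi}_{\gM,i}$. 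In an eigenbasis this is the condition $u\mid a_i$, equivalently $v\mid a_i$ since $a_i\in A[\![v]\!]$, which is exactly the defining condition for $\bigcup_{J\ni i}\cL^\tau(J)$. Intersecting over $i$ gives $\cL^\tau(J)=\cC^\tau(J)$ directly, with no Galois computation and no case split on $\cP_\tau$.
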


\begin{proof}
We have just proved that the stacks $\cL^{\tau}(J)$ are reduced and irreducible, and by Lemma~\ref{lem:L-cover-C} their union is $\cC^{\tau,\BT,1}$. On the other hand, there are $2^f$ stacks $\cL^{\tau}(J)$, and $\cC^{\tau,\BT,1}$ has $2^f$ irreducible components. It follows that each $\cL^{\tau}(J)$ must be a different one of the irreducible components.

By \cite[Prop.~5.4.2]{cegsC}, the union $\cup_{J \ni i} \,\cC^{\tau}(J)$ (the union over profiles $J$ containing $i$) is the zero locus in $\cC^{\tau,\BT,1}$ of the $\eta$-isotypic part of $\overline{\Phi}_{\gM,i} : \varphi^* (\gM/u\gM)_{i-1} \to (\gM/u\gM)_i$. In the eigenbasis $\beta$, this map is multiplication by $a_i$ (mod $u$). Since $a_i \in A[\![v]\!]$, we have $u \mid a_i$ if and only if $v \mid a_i$. Therefore $\cup_{J \ni i} \,\cC^{\tau}(J) = \cup_{J \ni i} \,\cL^{\tau}(J)$. Since this equality holds for each $i$ we deduce that $\cL^{\tau}(J) = \cC^{\tau}(J)$.
\end{proof}

\begin{cor}\label{cor:partial-frob-C(J)}
If $A$ is an $\F$-algebra, then Zariski locally on $A$, $\cC^{\tau}(J)(A)$ is precisely the groupoid of Breuil--Kisin modules with partial Frobenius matrices $A_{\beta,i}$ of the form
\begin{align*}
B_i \begin{pmatrix} v & 0 \\
0 & 1 \end{pmatrix} \text{ if } i \in J, \: \text{ and } \:
\begin{pmatrix} 1 & 0 \\
0 & v \end{pmatrix} B_i \text{ if } i \not\in J,
\end{align*}
for some $B_i \in \GL_2(A[\![v]\!])$. 
\end{cor}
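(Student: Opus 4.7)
The plan is to obtain the corollary as an essentially immediate consequence of Theorem~\ref{thm:Ltau=Ctau} together with the explicit factorization arguments that were already developed in the lemma preceding that theorem. Since $\cC^\tau(J) = \cL^\tau(J)$ by Theorem~\ref{thm:Ltau=Ctau}, it suffices to give the asserted description of $\cL^\tau(J)(A)$ Zariski-locally on $\Spec A$. The ``converse'' direction is already contained in the construction of the map $\pi \colon (L^+\GL_2)^f \to \cL^\tau(J)$ via \eqref{eq:a-beta-i-map}: any tuple $(B_i) \in \GL_2(A[\![v]\!])^f$ produces a Breuil--Kisin module in $\cC^\tau(J)(A)$, and the strong determinant condition is automatic because each matrix on the right of \eqref{eq:a-beta-i-map} has determinant in $v \cdot A[\![v]\!]^\times$ (cf.~Proposition~\ref{prop:reduced-f-strong-det}).

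For the forward direction, given $\gM \in \cL^\tau(J)(A)$, Lemma~\ref{lem:zariski-locally-free} and the discussion in Definition~\ref{def:eigenbasis} ensure that after passing to a Zariski open cover of $\Spec A$ we may choose an eigenbasis $\beta$ of $\gM$. Writing $A_{\beta,i}$ as in \eqref{eq:def-of-Abeta}, the shape condition defining $\cL^\tau(J)$ yields $v \mid a_i$ for $i \in J$ and $v \mid d_i$ for $i \not\in J$; writing $a_i = v a'_i$ (respectively $d_i = v d'_i$) I may factor
\[ A_{\beta,i} = \begin{pmatrix} a'_i & b_i \\ c_i & d_i \end{pmatrix}\begin{pmatrix} v & 0 \\ 0 & 1 \end{pmatrix}, \qquad A_{\beta,i} = \begin{pmatrix} 1 & 0 \\ 0 & v \end{pmatrix}\begin{pmatrix} a_i & b_i \\ c_i & d'_i \end{pmatrix} \]
in the two cases. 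Invertibility of the central matrix $B_i$ over $A[\![v]\!]$ follows from Proposition~\ref{prop:reduced-f-strong-det}: the strong determinant condition forces $\det A_{\beta,i} \in v\cdot A[\![v]\!]^\times$, so $\det B_i \in A[\![v]\!]^\times$.

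There is essentially no hard step here; the only point that needs a brief verification is consistency in the cuspidal case, where the statement is to be interpreted for $i$ in a half-period $0 \le i < f$ and the matrices $A_{\beta,i+f}$ are then determined by \eqref{eq:A-cuspidal}. Here one simply notes that if $A_{\beta,i} = B_i \begin{pmatrix} v & 0 \\ 0 & 1\end{pmatrix}$, then applying \eqref{eq:A-cuspidal} produces a matrix whose bottom row is divisible by $v$, giving a factorization $A_{\beta,i+f} = \begin{pmatrix} 1 & 0 \\ 0 & v\end{pmatrix} B_{i+f}$ for some $B_{i+f} \in \GL_2(A[\![v]\!])$ (with $\det B_{i+f} = \det B_i$). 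This is exactly the form demanded by the profile at $i+f$, since $i \in J \Longleftrightarrow i+f \not\in J$ by Definition~\ref{def:profile}.
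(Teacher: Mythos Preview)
Your argument is correct and is precisely the paper's approach: the corollary is stated without proof because it is immediate from Theorem~\ref{thm:Ltau=Ctau} together with the factorization argument in the lemma preceding it, which you have faithfully reproduced (including the extra consistency check in the cuspidal case). One small technical point: your appeal to Proposition~\ref{prop:reduced-f-strong-det} for the invertibility of $B_i$ literally requires $A$ reduced, whereas the corollary is stated for arbitrary $\F$-algebras $A$; the paper makes the same implicit leap in the lemma (``It follows that $\gM$ is in the image of $\pi$''), and the gap is harmless since the closed locus in $\cL^\tau(J)$ where $\det B_i(0)$ fails to be a unit has no field-valued points and hence is empty.
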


\subsection{Finite type points of \texorpdfstring{$\cZ^\tau(J)$}{Z\^{}tau(J)}}\label{sec:finite-type-z}
As an application of Corollary~\ref{cor:partial-frob-C(J)} 
we are able to give a direct description  of the finite type points of $\cZ^{\tau}(J)$, by which we mean the points of $\cZ^{\tau}(J)(\F')$ for each finite extension $\F'/\F$.  
Recall from Section~\ref{sec:ill-sw} that to the pair $\tau$ and $J$ we have  associated a tuple of integers $s_{J,i}$ and a character~$\Theta_J$, which is identified via the Artin map with the character $\Theta_J : k^\times \to \F^\times$ such that
\begin{equation} \Theta_J \circ \mathrm{N}_{k'/k} = \eta' \otimes \prod_{i \in \Z/f'Z} (\kappa'_i)^{t_{J, i}};\end{equation}
see equations \eqref{def:s_J} and \eqref{def:theta_J}. Write
\[ \Theta_J = \prod_{i=0}^{f-1} \omega_i^{\theta_{J,i}} \] for some integers $\theta_{J,i}$. In the cuspidal case by definition we have $\theta_{J,i+f} = \theta_{J,i}$ for all $i$. The main result of this section is the following.

\begin{thm}\label{prop:modified-basis-of-invariants} Suppose $\F'/\F$ is a finite extension and let $M$ be an \'etale-$\varphi$ module 
with $\F'$-coefficients and descent data from $K'$ to $K$. Write $M^0$ for the $\Gal(K'/K)$-invariants of $M$. Then $M$
is a finite type point of $\cZ^{\tau}(J)$ if and only if $(M^0)_i$ has a basis $x_i,y_i$ for each $i \in \Z/f\Z$ such that the partial Frobenius maps $\Phi_{M^0,i}$  with respect to the basis $(x_i,y_i)_{i \in \Z/f\Z}$
have matrices
\[
    B_{i} \begin{pmatrix}
    v  & 0 \\
    0 & v^{-s_{J, i}}
    \end{pmatrix} v^{-\theta_{J,i}}
\]
 for matrices $B_{i} \in \GL_2(\F'[\![v]\!])$.
\end{thm}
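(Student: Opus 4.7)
The proof combines Corollary \ref{cor:partial-frob-C(J)}, which gives an explicit description of the $\F'$-points of $\cC^\tau(J)$, with an explicit computation of the associated \'etale $\varphi$-module for $K$ after inverting $u$ and taking $\Gal(K'/K)$-invariants. A preliminary point is to reduce the ``only if'' direction to the analysis of $\gM[1/u]$ for some $\gM \in \cC^\tau(J)(\F'')$ with $\F''/\F'$ a finite extension: this follows from the surjectivity of $\cC^{\tau,\BT,1} \to \cZ^{\tau,1}$ on finite type points, which in turn results from the properness of Breuil--Kisin moduli onto the scheme-theoretic image. The descent of the resulting basis from $\F''$ back to $\F'$ can be handled because the matrix form in the statement is preserved under Galois twisting and admits the required uniqueness up to multiplication by $\GL_2(\F'[\![v]\!])$.

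The forward direction then proceeds by an explicit construction. Given $\gM$ with eigenbasis $\beta = (\eb_i, \fb_i)$ whose partial Frobenius has matrices $A_{\beta,i}$ of the shape described in Corollary \ref{cor:partial-frob-C(J)}, a $\Gal(K'/K)$-invariant basis of $(M^0)_i$ is produced by multiplying $\eb_i$ and $\fb_i$ by appropriate monomials in $u$ to kill the action of inertia. In the principal series case one takes $x_i := u^{-k_i} \eb_i$ and $y_i := u^{-k'_i} \fb_i$, with invariance a direct computation using that $g \in I(K'/K)$ scales $u$ by $\kappa'_i(h(g))$ and $\eb_i, \fb_i$ by $\kappa'_i(h(g))^{k_i}, \kappa'_i(h(g))^{k'_i}$ in the $i$-th component. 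In the cuspidal case one must additionally ensure $\mathfrak{c}$-invariance; using the symmetry $\mathfrak{c}(\eb_i) = \fb_{i+f}$ from Definition \ref{def:eigenbasis}, this entangles the construction across the pair $(i, i+f)$.

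With this basis in hand, the heart of the proof is the computation of $\varphi_{M^0,i}(x_{i-1})$ and $\varphi_{M^0,i}(y_{i-1})$ by expanding $\varphi(u^{-k_{i-1}} \eb_{i-1}) = u^{-p k_{i-1}} \varphi(\eb_{i-1})$ via $A_{\beta,i}$ and then re-expressing the output in terms of $(x_i, y_i)$. The powers of $u$ collapse to powers of $v = u^{e'}$ via the congruences $p k_{i-1} \equiv k_i$ and $p k'_{i-1} \equiv k'_i$ modulo $e'$. The resulting combinatorial identity matches the exponents of $v$ with $s_{J,i}$ and $\theta_{J,i}$ as defined in equations \eqref{def:s_J} and \eqref{def:theta_J}, and verifies that the factorization as $B_i \begin{pmatrix} v & 0 \\ 0 & v^{-s_{J,i}} \end{pmatrix} v^{-\theta_{J,i}}$ is consistent with the shape of $A_{\beta,i}$ (which depends on whether $i \in J$ or not). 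Equation \eqref{eq:useful-k-equation} supplies the main arithmetic input.

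The converse direction reverses the construction: starting from $(x_i, y_i)$ with partial Frobenius matrices of the prescribed form, define $\eb_i := u^{k_i} x_i$ and $\fb_i := u^{k'_i} y_i$, take $\gM_i$ to be their $\F'[\![u]\!]$-span, and verify this produces an element of $\cC^\tau(J)(\F')$—namely that it has height at most $1$, satisfies the strong determinant condition (via Proposition \ref{prop:reduced-f-strong-det}), has descent data of type $\tau$, and that its partial Frobenius matrices take the shape of Corollary \ref{cor:partial-frob-C(J)}. The main obstacle, in my view, is the combinatorial bookkeeping in the cuspidal case: the $\mathfrak{c}$-symmetry forces the invariant construction to interweave the $i$ and $i+f$ positions of the original Breuil--Kisin data, so the matrix computations at these positions must be handled in parallel and shown to remain compatible with the formula at the other components, and in particular that the character $\Theta_J$ rather than some other twist emerges correctly from the descent.
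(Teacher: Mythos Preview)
Your overall strategy matches the paper's: identify $\cZ^\tau(J)(\F')$ with those $M$ admitting an eigenbasis whose matrices $A_{\beta,i}$ have the form of Corollary~\ref{cor:partial-frob-C(J)}, then pass to $\Gal(K'/K)$-invariants by an explicit monomial rescaling and track the Frobenius. However, the basis $x_i=u^{-k_i}\eb_i$, $y_i=u^{-k'_i}\fb_i$ you write down is too naive. It is $J$-independent, while the target form depends on $J$ through both $s_{J,i}$ and the ordering of $v$ versus $v^{-s_{J,i}}$ on the diagonal; moreover it carries no mechanism to hit the specific representatives $\theta_{J,i}$ (which are only well-defined up to $\Lambda$). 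The paper instead takes
\[(m_i\ n_i)=(\eb_i\ \fb_i)\begin{pmatrix}u^{\ell'_i}&0\\0&v^{\delta_{J^c}(i)}\end{pmatrix}u^{-k'_i}v^{-1+\nu_i},\]
and then swaps $m_{i-1}\leftrightarrow n_{i-1}$ whenever $i-1\notin J$. The factor $v^{\delta_{J^c}(i)}$ absorbs the asymmetry between the two cases in \eqref{eq:a-beta-i-map}; the integers $\nu_i$, chosen so that $\theta_{J,i}=\mu_i+t_{J,i}+\nu_i-p\nu_{i-1}$, align the scalar twist with the fixed $\theta_{J,i}$; and the swap puts the diagonal entries in the stated order. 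If you start from your basis and attempt the combinatorial match you describe, you will be forced to introduce exactly these adjustments.

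Two smaller points. Your detour through a finite extension $\F''/\F'$ with a subsequent descent is unnecessary: the paper cites \cite[Lem.~3.2.14]{emertongeeproper} together with properness of $\cC^\tau(J)\to\cR^{\dd,1}$ to work directly over $\F'$, and the converse is handled simply by observing that the change of basis is reversible and depends only on $\tau$ and $J$. In the cuspidal case, the paper shows that the integers $\xi_i$ governing $\mathfrak{c}(m_i\ n_i)=v^{\xi_i}(n_{i+f}\ m_{i+f})$ satisfy $p\xi_{i-1}-\xi_i=0$ for all $i$ and hence vanish; this is a separate identity (using the definitions of $t_{J,i}$ and the $f$-periodicity of $\theta_{J,i}$) that does not fall out of \eqref{eq:useful-k-equation} alone.
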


\begin{remark}\label{rem:ordering-irrelevant}
 In the statement of the proposition, the ordering of the diagonal elements of the diagonal matrix is irrelevant, in the sense that they can be swapped by a suitable change of basis. Specifically, if $J' \subset \Z/f\Z$ is any subset, then replacing $(x_{i-1},y_{i-1})$ with $(y_{i-1},x_{i-1})$ for each $i \in J'$ swaps the order of the diagonal elements, 
at the cost of multiplying $B_{i-1}$ and $B_i$ on the left and right by $\left(\begin{smallmatrix} 0 & 1 \\ 1 & 0 \end{smallmatrix}\right)$ respectively for each $i \in J'$.

\end{remark}
\begin{proof}[Proof of Theorem~\ref{prop:modified-basis-of-invariants}]
The stack $\cZ^{\tau}(J)$ is the scheme-theoretic image of the morphism $\cC^\tau(J) \to \cR^{\dd,1}$;\ this morphism is proper because it is the composition of the closed immersion $\cC^\tau(J) \to \cC^{\tau,\BT,1}$ with the proper morphism $\cC^{\tau,\BT,1} \to \cR^{\dd,1}$. It follows from \cite[Lem~3.2.14]{emertongeeproper} that the $\F'$-points of $\cZ^{\tau}(J)$ are precisely the $\F'$-points of $\cR^{\dd,1}$ whose fiber in $\cC^\tau(J) \to \cR^{\dd,1}$ is nonempty;\ in other words, which have a eigenbasis $\beta =  (\beta_i)_i$ such that the partial Frobenius maps $\Phi_{M,i}$ have exactly the form given in Corollary~\ref{cor:partial-frob-C(J)}, i.e., such that the 
matrices $A_{\beta,i}$ are
\begin{align*}
B'_i \begin{pmatrix} v & 0 \\
0 & 1 \end{pmatrix} \text{ if } i \in J, \: \text{ and } \:
\begin{pmatrix} 1 & 0 \\
0 & v \end{pmatrix} B'_i \text{ if } i \not\in J.
\end{align*}
for some $B'_i \in \GL_2(\F'[\![v]\!])$. Note that these matrices can be written in a single formula as
\begin{equation}\label{eq:uniform-B}
A_{\beta,i} = \begin{pmatrix} 1 & 0 \\
0 & v^{\delta_{J^c}(i)} \end{pmatrix} 
B'_i \begin{pmatrix} v^{\delta_J(i)} & 0 \\
0 & 1 \end{pmatrix}.
\end{equation}

Before continuing the proof we need to introduce some additional notation. Write $\eta' = \prod_{i=0}^{f'-1} (\widetilde{\omega}'_i)^{\mu_i}$ for integers $\mu_i \in [0,p-1]$.  Then
$k'_i = \sum_{j=0}^{f'-1} p^{f'-1-j} \mu_{i+j+1}$ for all~$i$, and 
\begin{equation}\label{eq:get-mus}
    pk'_{i-1} - k'_i = (p^{f'}-1) \mu_i.
\end{equation} Next, the equality 
\[ \Theta_J = \prod_{i=0}^{f'-1} (\omega'_i)^{\theta_{J,i}} = \prod_{i=0}^{f'-1} (\omega'_i)^{\mu_i + t_{J,i}} \] 
implies the existence of integers $\nu_i$ such that
\begin{equation}\label{eq:def-of-nu}
    \theta_{J,i} = \mu_i + t_{J,i} + \nu_i - p \nu_{i-1}
\end{equation}
for all $i$. Now for each $i \in \Z/f'\Z$ we define a basis $(m_i,n_i)$ of the inertial invariants $M^{I(K'/K)}$ by the formula
\[
(m_i \ n_i ) = (e_i  \ f_i) \begin{pmatrix}
    u^{\ell'_i} & 0 \\ 0 & v^{\delta_{J^c}(i)} 
\end{pmatrix} u^{-k'_i} v^{-1+\nu_i}.
\]
Then one computes directly using equations \eqref{eq:def-of-Abeta}, \eqref{eq:uniform-B}, \eqref{eq:useful-k-equation}, \eqref{eq:get-mus}, and \eqref{eq:def-of-nu} 
that the matrix of $\Phi_{M,i}$ with respect to  the basis $(m_i,n_i)_{i \in \Z/f'\Z}$ is 
\[ B'_i \begin{pmatrix}
v^{\delta_J(i) - \gamma_i + t_{J_i}} & 0 \\
0 & v^{p \delta_{J^{c}}(i-1) - (p-1) + t_{J,i}}
\end{pmatrix} v^{-\theta_{J,i}}. \]
Substituting the definition of $t_{J,i}$, we obtain
\[ B'_i \begin{pmatrix}
v & 0 \\ 0 & v^{-s_{J,i}}
\end{pmatrix} v^{-\theta_{J,i}} \quad \text{ if }  i-1 \in J,  \qquad B'_i \begin{pmatrix}
v^{-s_{J,i}} & 0 \\ 0 & v
\end{pmatrix} v^{-\theta_{J,i}} \quad \text{ if } i-1 \not\in J.
\]
Finally, defining $(x_{i-1}, y_{i-1}) = (m_{i-1}, n_{i-1})$ if $i-1 \in J$ and  $(x_{i-1}, y_{i-1}) = (n_{i-1}, m_{i-1})$ if $i-1 \not\in J$, then by Remark~\ref{rem:ordering-irrelevant} the matrix of $\Phi_{M,i}$ with respect to the basis $(x_i,y_i)_{i \in \Z/f'\Z}$ is
 \[ B_i \begin{pmatrix}
v & 0 \\ 0 & v^{-s_{J,i}}
\end{pmatrix} v^{-\theta_{J,i}} \]
for suitable $B_i$.

In the principal series case, where $I(K'/K) = \Gal(K'/K)$, this completes the proof of the `only if' direction of the proposition, and the `if' direction follows on observing that  our change of basis from the eigenbasis $(e_i,f_i)_i$ of $M$ to the  basis $(x_i,y_i)_i$ of $M^0$ is reversible and depends only on the initial data of $\tau$ and $J$. 

For the remainder of the proof, we assume that $\tau$ is cuspidal. To compute $M^0$ from $M^{I(K'/K)}$ we take further invariants under the element $\mathfrak{c} \in \Gal(K'/K)$ that fixes $\pi'$ and lifts the Frobenius element in $\Gal(K'/K)/I(K'/K) = \Gal(k'/k)$.  Another direct computation, recalling that $\mathfrak{c}(e_i,f_i) = (f_{i+f},e_{i+f})$, that $k_i = k'_{i+f}$, and that $\delta_J(i) = \delta_{J^c}(i+f)$, shows that
\begin{equation}\label{eq:c-basis-formula} \mathfrak{c}(m_i \ n_i) = v^{\xi_i} (n_{i+f} \ m_{i+f} )
\end{equation}
where for each $i$, we have
\[ \xi_i =\tfrac{\ell'_i + k_i - k'_i}{p^{f'}-1} + \nu_i - \nu_{i+f} - \delta_J(i). \] 
But using \eqref{eq:useful-k-equation}, \eqref{eq:get-mus} and \eqref{eq:def-of-nu}, 
we can compute
\begin{multline*}
p\xi_{i-1} - \xi_i =  (p-1-\gamma_i) - \mu_i + \mu_{i+f} + (\mu_i + t_{J,i} - \theta_{J,i}) \\ - (\mu_{i+f} + t_{J,i+f} - \theta_{J,i+f})  - p \delta_J(i-1) + \delta_J(i).
\end{multline*}
Since $\theta_{J,i} = \theta_{J,i+f}$, this simplifies to 
\[ p\xi_{i-1} - \xi_i =  t_{J,i} - t_{J,i+f} + (p-1-\gamma_i) - p\delta_J(i-1) + \delta_J(i) \]
and substituting the definition of $t_{J,i}$ (and recalling that $i-1 \in J$ if and only if $i+f-1 \not\in J$), we conclude finally that
\[ p \xi_{i-1} - \xi_i = 0 \]
for all $i$. But then iteratively we have $\xi_i = p^{f'} \xi_i$ for all $i$, and so in fact $\xi_i = 0$, and \eqref{eq:c-basis-formula} simplifies to \begin{equation}\label{eq:c-better-basis-formula} \mathfrak{c}(m_i \ n_i) =  (n_{i+f} \ m_{i+f} )
\end{equation} for all $i$. Therefore  $(m_i + n_{i+f}, m_{i+f} + n_i)$ are  $\Gal(K'/K)$-invariant, and are a basis for $M^0_i$. Furthermore the commutation relation between $\varphi$ and $\mathfrak{c}$  implies that if $X_i$ is the matrix of $\Phi_{M,i}$ with respect to the basis $(m_i,n_i)_{i \in \Z/f'\Z}$ then $X_{i+f} = \Ad \begin{pmatrix} 0 & 1 \\ 1 & 0 \end{pmatrix} (X_i)$. 

Now we can conclude just as in the principal series case:\ for each integer $0 \le i < f$ we define $(x_i,y_i) = (m_i+n_{i+f},m_{i+f}+n_i)$ if $i - 1\in J$ and $(x_i,y_i) = (m_{i+f}+n_{i},m_{i}+n_{i+f})$ if $i - 1 \not\in J$, and then the matrix of $\Phi_{M^0_i}$ with respect to  $(x_i,y_i)_{i = 0,\ldots,f-1}$ is
 \[ B_i \begin{pmatrix}
v & 0 \\ 0 & v^{-s_{J,i}}
\end{pmatrix} v^{-\theta_{J,i}} \]
for suitable $B_i$.
\end{proof}

\begin{remark}
     Readers who are familiar with the article \cite{gls12} will recognize that Theorem~\ref{prop:modified-basis-of-invariants} has a reinterpretation in terms of crystalline lifts. This will be discussed in the following section at Theorem~\ref{thm:gls-irregular}, Remark~\ref{rem:converse-remark-regular}, and Corollary~\ref{cor:gls-converse}.
\end{remark}

\section{Irregular loci}\label{sec:irregular}

We are now ready to prove many of our main results about irregular loci in the Emerton--Gee stacks. We begin in Section~\ref{sec:comparisons} by proving the isomorphism between the stacks $\cX^{\tau,\BT}$ and $\cZ^{\tau}$. This establishes the existence of substacks $\cZ^{\underline{r}}_{\red}$ of $\cZ^{\dd}$ whose finite type points are Galois representations having crystalline lifts of Hodge type $\underline{r}$, whenever $\underline{r}$ is $p$-bounded and irregular (\emph{cf.}\ Definition~\ref{def:hodge-type-names}). There is a Hodge type associated to each pair $(\tau,J)$, and in Section~\ref{sec:combinatorial-results} we prove the combinatorial fact that each $p$-bounded and irregular Hodge type arises from such a pair.  In Section~\ref{sec:comparison-section} we use this combinatorial input along with the results of Section~\ref{sec:shape} to prove that the stacks $\cZ^{\underline{r}}_{\red}$ are equal to the stacks $\cZ^{\tau}(J)$ for suitable choices of $\tau$ and $J$.



\subsection{A comparison of  \texorpdfstring{\cite{cegsB}}{[CEGSb]} and \texorpdfstring{\cite{EGmoduli}}{[EG2]} stacks}
\label{sec:comparisons}

In this subsection only, we allow $K/\Qp$ to be an arbitrary finite extension (i.e., not necessarily unramified). Our goal in this subsection is to establish that the stack $\cZ^{\tau}$ is isomorphic to the Emerton--Gee stack $\cX^{\tau,\BT}$ of potentially Barsotti--Tate representations of type $\tau$.  For this, we need to begin with a few recollections from \cite{EGmoduli}.

Let $\cX_{2}$ be the Emerton--Gee stack of rank $2$ \'etale $(\varphi, \Gamma)$-modules over $K$ with $\cO$-algebra coefficients, i.e., the $d=2$ case of the stack $\cX_{d}$ discussed in the introduction. 
As before we write $\cR_2$ for the $\Spf(\cO)$-stack of rank $2$ \'etale $\varphi$-modules for $K$ (without descent data). We recall from \cite[Thm.~3.7.2]{EGmoduli} that there is a morphism 
\[ f: \cX_{2} \rightarrow \cR_2 \]
such that the map $\cX_2(A) \to \cR_2(A)$, for each complete local Noetherian $\cO$-algebra $A$, is given by restriction  from $G_K$ to $G_{K_\infty}$ on the corresponding Galois representations. We emphasize that despite the notation, the map $f$ is not simply ``forgetting $\Gamma$'', because the $(\varphi,\Gamma)$-modules of \cite{EGmoduli} are cyclotomic, whereas the \'etale $\varphi$-modules of $\cR_2$ are Kummer (following the terminology of \cite[Examples~2.1.2--2.1.3]{EGmoduli}).

Emerton and Gee construct substacks of $\cX_2$ which may be regarded as stacks of potentially crystalline representations with specified inertial and Hodge types. For our purposes we make the following definition.
\begin{defn}\label{def:hodge type}
A \emph{Hodge type} of rank $d$ is a tuple of integers $ \underline{r} = \{r_{\kappa,j}\}_{\kappa: K \into E, 1 \le j \le d}$ with $r_{\kappa,1} \ge \cdots \ge r_{\kappa,d}$ for all $i$. 
\end{defn}

The integers $r_{\kappa,i}$ should be thought of as being the $\kappa$-labeled Hodge--Tate weights of a $d$-dimensional representation of $G_K$. 
When $K/\Qp$ is unramified and we have indexed the embeddings $\kappa_i : K \into E$ by $i \in \Z/f\Z$, we will generally write $r_{i,j}$ in place of $r_{\kappa_i,j}$.

For each inertial type $\tau$  and Hodge type $\underline{r}$ of rank $2$, Theorem 4.8.12 of \cite{EGmoduli} guarantees the existence of a closed substack $\cX_2^{\crys,\tau,\underline{r}}$ of $\cX_2$ that is a $p$-adic formal algebraic stack, flat over $\cO$, such that $\cX_2^{\crys,\tau,\underline{r}}(A)$  for each finite flat $\cO$-algebra $A$ is the subgroupoid of potentially crystalline $G_K$-representations having inertial type $\tau$ and Hodge--Tate weights~$\underline{r}$.

Let $\BT$ denote the Hodge type $\underline{r}$ with $(r_{\kappa,1},r_{\kappa,2}) = (1,0)$ for all $\kappa$, and let $\mathrm{triv}$ denote the trivial inertial type. For brevity will write $\cX^{\tau,\BT}$  and $\cX^{\underline{r}}$  in place of $\cX_2^{\crys,\tau,\BT}$ and $\cX_2^{\crys,\mathrm{triv},\underline{r}}$ respectively, and we will write $\cX^{\tau,\BT}_{\red}$ and $\cX^{\underline{r}}_{\red}$ for their underlying reduced substacks.  By \cite[Thm.~4.8.12]{EGmoduli}, the finite type points of $\cX^{\underline{r}}_{\red}$ are precisely the mod $p$ representations with crystalline lifts of Hodge--Tate weights~$\underline{r}$.

By \cite[Thm.~4.8.14]{EGmoduli} the stack $\cX^{\tau,\BT}_{\red}$ is a union of irreducible components of $\cX_{2,\red}$, and similarly for  $\cX^{\underline{r}}_{\red}$ provided that $\underline{r}$ is regular;\ however, if $\underline{r}$ is irregular, then $\cX^{\underline{r}}_{\red}$ has codimension equal to the number of elements $i \in \Z/f\Z$ with $r_{i,1} = r_{i,2}$.

For a local Artinian $\cO$-algebra with finite residue field, let $\Rep_{\mathrm{fl},K'/K}(A)$ denote the category of  $A$-representations of $G_K$ that are potentially finite flat and become finite flat over $K'$. Let  $\Rep_{G_{K_\infty}}(A)$ denote the category of finite $A$-representations of $G_{K_{\infty}}$.

\begin{lemma}\label{lem:restriction-lemma}
Let $A$ be a local Artinian $\cO$-algebra with finite residue field, and $K'/K$ any tamely ramified finite extension.  The  functor $\Rep_{\mathrm{fl},K'/K}(A) \to \Rep_{G_{K_\infty}}(A)$ given by restriction from $G_K$-representations to $G_{K_\infty}$-representations is fully faithful.
\end{lemma}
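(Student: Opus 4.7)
The plan is to separately establish faithfulness and fullness. Faithfulness will be immediate, since a morphism in either category is an $A$-linear map on the underlying $A$-modules satisfying an equivariance condition, and restriction along a subgroup does not alter the underlying $A$-linear map; so two $G_K$-equivariant morphisms that agree as $G_{K_\infty}$-equivariant maps are literally the same map.

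For fullness, I would take a $G_{K_\infty}$-equivariant $A$-linear map $f \col V \to W$ with $V, W \in \Rep_{\mathrm{fl},K'/K}(A)$ and show in two stages that $f$ is $G_K$-equivariant. First, by the compatible choice of $p^n$-th roots of $\pi$ and $\pi'$ from Section~\ref{sec:galois-representations} we have $K_\infty \subset K'_\infty$, so $G_{K'_\infty} \subset G_{K_\infty}$ and $f$ is in particular $G_{K'_\infty}$-equivariant. Now $V|_{G_{K'}}$ and $W|_{G_{K'}}$ are finite flat $G_{K'}$-representations with $A$-coefficients, and I would invoke the Breuil--Kisin fully faithfulness theorem (valid for $p > 2$, due to Kisin, and extending to Artinian $\cO$-coefficients via the theory of Breuil--Kisin modules with $A$-coefficients as developed in \cite{EGmoduli}) to conclude that $f$ is $G_{K'}$-equivariant. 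Hence $f$ is equivariant under the closed subgroup $H \subset G_K$ generated by $G_{K'}$ and $G_{K_\infty}$.

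To finish, I would identify $H = G_K$ by computing the fixed field of $H$ as $K' \cap K_\infty$ and arguing this intersection equals $K$: any finite subextension $M/K$ of $K' \cap K_\infty$ has degree a power of $p$ (since $K_\infty/K$ is pro-$p$, each step $K(\pi_n)/K(\pi_{n-1})$ being totally ramified of degree $p$), but $M/K$ is also a totally ramified subextension of the tame extension $K'/K$, forcing $[M:K]$ to be prime to $p$. So $M = K$, hence $K' \cap K_\infty = K$, hence $H = G_K$, and $f$ is $G_K$-equivariant.

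The main step requiring care is the invocation of Breuil--Kisin fully faithfulness with Artinian coefficients in the second stage, but this is classical and well-documented. Beyond that, the overall strategy --- promote $G_{K'_\infty}$-equivariance to $G_{K'}$-equivariance via Breuil--Kisin, then combine with the given $G_{K_\infty}$-equivariance using the Galois-theoretic disjointness of $K'$ and $K_\infty$ over $K$ --- is the natural approach and should not involve serious obstacles.
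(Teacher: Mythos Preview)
Your proposal is correct and follows essentially the same route as the paper: reduce to the known finite-flat case over $K'$ (the paper cites \cite[Thm.~3.4.3]{MR1971512} rather than Kisin/\cite{EGmoduli}, but the content is the same), then use that $G_{K'}$ and $G_{K_\infty}$ generate $G_K$ because $K'/K$ is tame while $K_\infty/K$ is totally wildly ramified. Your argument for $K'\cap K_\infty = K$ is exactly the justification the paper leaves implicit.
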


\begin{proof}
For finite flat representations (as opposed to potentially finite flat), the lemma is now standard;\ see for example \cite[Thm.~3.4.3]{MR1971512}.

In the general case, let $V,W$ be objects of $\Rep_{\mathrm{fl},K'/K}(A)$, and let $f : V \to W$ be an $A$-linear map which is $G_{K_\infty}$-equivariant. We need to prove that $f$ is actually $G_K$-equivariant. Since $f$ is $G_{K'_\infty}$-equivariant, by the finite flat case it is $G_{K'}$-equivariant. Therefore $f$ is both $G_{K_\infty}$- and $G_{K'}$-equivariant. But $G_{K_\infty}$ and $G_{K'}$ generate all of $G_K$, because $K'/K$ is tamely ramified and any finite subextension of $K_\infty/K$ is totally wildly ramified. The lemma follows.
\end{proof}

\begin{lemma}\label{lemma:f-is-monomorphism}
For each tame type $\tau$ the map $\cX^{\tau,\BT} \rightarrow \cR_2$ given by the restriction of $f$  is a monomorphism.
\end{lemma}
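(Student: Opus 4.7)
The plan is to show that for every test $\cO$-algebra $A$ the functor $\cX^{\tau,\BT}(A) \to \cR_2(A)$ induced by $f$ is fully faithful, which is the definition of monomorphism. Since $\cX^{\tau,\BT}$ and $\cR_2$ are both $\varpi$-adic formal algebraic stacks, it suffices to verify this for $A$ a local Artinian $\cO$-algebra with finite residue field:  the diagonal of $f|_{\cX^{\tau,\BT}}$ is a morphism of such stacks, and its failure to be an isomorphism can be detected on versal rings at finite type points, which are in turn inverse limits of their Artinian quotients.

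For such an $A$, the $A$-points of $\cX^{\tau,\BT}$ are, by the construction in \cite[Thm.~4.8.12]{EGmoduli} (and the underlying theory of Kisin), continuous Galois representations $\rho \colon G_K \to \GL_2(A)$ that are potentially Barsotti--Tate of inertial type $\tau$. Because $\tau|_{I_{K'}}$ is trivial, the restriction $\rho|_{G_{K'}}$ is Barsotti--Tate, and in particular (for $A$ Artinian) arises from a finite flat $\cO_{K'}$-group scheme model;\ equivalently, $\rho$ lies in $\Rep_{\mathrm{fl},K'/K}(A)$. By the description of $f$ recalled before the statement of the lemma, the composite $\cX^{\tau,\BT}(A) \to \cR_2(A)$, under the identification $\cR_2(A) \simeq \Rep_{G_{K_\infty}}(A)$ afforded by the theory of the field of norms, is computed by the restriction-to-$G_{K_\infty}$ functor.

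Applying Lemma~\ref{lem:restriction-lemma}, this restriction is fully faithful on all of $\Rep_{\mathrm{fl},K'/K}(A)$, and hence on the full subgroupoid coming from $\cX^{\tau,\BT}(A)$. This is exactly the full faithfulness required, completing the argument. The main obstacle is the initial reduction to Artinian test rings: once this is in hand, the rest is a direct application of Lemma~\ref{lem:restriction-lemma}, combined with the observation that the $A$-points of $\cX^{\tau,\BT}$ form a full subgroupoid of $\Rep_{\mathrm{fl},K'/K}(A)$ because $\tau$ has level dividing $K'$.
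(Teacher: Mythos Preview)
Your overall strategy matches the paper's: reduce to Artinian test rings, then invoke Lemma~\ref{lem:restriction-lemma}. The Artinian case is handled correctly. However, the reduction itself is where the real work lies, and your justification is too vague to stand on its own.

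First, you assert that $\cR_2$ is a $\varpi$-adic formal algebraic stack. The paper does not establish this, and in fact the ambient stacks of \'etale $\varphi$-modules are typically not (they are limit-preserving stacks but not locally of finite type); only the scheme-theoretic images such as $\cZ^{\tau}$ are shown to be formal algebraic. So the phrase ``the diagonal of $f|_{\cX^{\tau,\BT}}$ is a morphism of such stacks'' is not justified, and the subsequent appeal to versal rings does not go through directly: one cannot simply say that an isomorphism of stacks is detected on Artinian points without knowing something about the structure of the target $\cX^{\tau,\BT} \times_{\cR_2} \cX^{\tau,\BT}$.

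The paper closes this gap by following \cite[Prop.~7.2.11]{LLLM-models}: for objects $x_1,x_2$ of $\cX^{\tau,\BT}(A)$ with images $y_1,y_2$ in $\cR_2(A)$, one proves that the sheaves $\Isom(x_1,x_2)$ and $\Isom(y_1,y_2)$ are representable by finite type $A$-schemes (this uses \cite[Prop.~5.4.8]{emertongeeproper}, with an additional check that commutation with $\Gamma$ is a closed condition). Once representability is in hand, \cite[Lem.~7.2.5]{LLLM-models} reduces the comparison of these $\Isom$-schemes to the case of Artinian local $\cO$-algebras with finite residue field, which you have treated. You should either supply this representability argument or cite it explicitly; the sentence about versal rings is not an adequate substitute.
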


\begin{proof}
    We follow the strategy of the proof of \cite[Prop.~7.2.11]{LLLM-models}. Namely, it suffices to show for any $a \ge 1$ and any finite type $\cO/\varpi^a$-algebra $A$ that the functor $\cX^{\tau,\BT}(A) \rightarrow \cR_2(A)$ is fully faithful. In the case that $A$ is a local Artinian $\cO$-algebra with finite residue field, this follows directly from Lemma~\ref{lem:restriction-lemma}, because $\cX^{\tau,\BT}(A)$ is equivalent to a full subcategory of the groupoid of  $A$-module representations of $G_K$ that are potentially finite flat and become finite flat over $K'$.  To see the latter, note that an object of $\cX^{\tau,\BT}(A)$ specializing to the Galois representation $\rhobar : G_K \to \GL_2(A/\mathfrak{m}_A)$ is pulled back from a versal morphism $\Spf(R_{\rhobar}^{\tau,\BT}) \to \cX^{\tau,\BT}$, where $R_{\rhobar}^{\tau,\BT}$ is a potentially Barsotti--Tate deformation ring. Therefore the corresponding Galois module becomes finite flat over $K'$, e.g.\ by~\cite[Prop.~2.3.8]{kis04}. 

    The general case follows exactly as in the final paragraph of the proof of \cite[Prop.~7.2.11]{LLLM-models}: one establishes that for objects $x_1,x_2$ of $\cX^{\tau,\BT}(A)$ with images $y_1,y_2$ in $\cR_2(A)$, the functors $\Isom(x_1,x_2)$ and $\Isom(y_1,y_2)$ are representable by finite type $A$-schemes, and then one applies \cite[Lem~7.2.5]{LLLM-models} to reduce to the settled case of local Artinian $\cO$-algebras with finite residue field. 
\end{proof}

\begin{remark}
    We elaborate one point in the second part of the above argument. The reference to \cite[Prop.~5.4.8]{emertongeeproper} in  the proof of \cite[Prop.~7.2.11]{LLLM-models} handles the representability of $\Isom(y_1,y_2)$;\ it also establishes the representability of $\Isom(x^\circ_{1},x^\circ_2)$ where $x_i^\circ$ denotes the \'etale $\varphi$-module underlying $x_i$ (here we really do mean forgetting~$\Gamma$). It remains to check that commutation with $\Gamma$ cuts out a closed condition on $\Isom(x^\circ_{1},x^\circ_2)$. If it were the case that each projective \'etale $(\varphi,\Gamma)$-module were a direct summand of a free \'etale $(\varphi,\Gamma)$-module, as is the case for \'etale $\varphi$-modules by \cite[Lem.~5.2.14]{emertongeeproper}, 
    it would be straightforward to check this exactly as in the proof of \cite[Prop.~5.4.8]{emertongeeproper};\ but this does not seem immediately evident. On the other hand, writing each $x^\circ_i$ as the direct summand of a free \'etale $\varphi$-module, the action of $\Gamma$~(regarded as a \emph{semigroup}) on $x_i$ can be extended by zero to the free \'etale $\varphi$-module, and one can argue equally well using this extension.
\end{remark}

\begin{thm}\label{thm:EG-CEGS-isom}
There is an isomorphism $h^{\tau} : \cX^{\tau,\BT} \to \cZ^{\tau}$ which, for complete local Noetherian $\cO$-algebras $A$, is given by the identity on the corresponding Galois representations.
\end{thm}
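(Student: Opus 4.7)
The plan is to realize $h^\tau$ as the inverse of a closed immersion $j : \cZ^\tau \hookrightarrow \cX^{\tau,\BT}$ produced from the monomorphism $i : \cX^{\tau,\BT} \to \cR^{\dd}$ of Lemma~\ref{lemma:f-is-monomorphism} (composed with the equivalence $\cR_2 \simeq \cR^{\dd}$), and then to show $j$ is actually an isomorphism.

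First I would construct a morphism $\tilde g : \cC^{\tau,\BT} \to \cX^{\tau,\BT}$ fitting into a commutative triangle with $i$ and the ``invert $u$'' morphism $g : \cC^{\tau,\BT} \to \cR^{\dd}$. On $A$-points for a $p$-adically complete $\cO$-algebra $A$, the map $\tilde g$ sends a Breuil--Kisin module $\gM$ with descent data of type $\tau$ to the associated $G_K$-representation $T^*(\gM)$, which by the strong determinant condition (\cegsBpotBTreps) is potentially Barsotti--Tate of type $\tau$ with Hodge--Tate weights $\{0,1\}$, and hence defines a point of $\cX^{\tau,\BT}$. Functoriality in $A$ follows from the construction of $\cX^{\tau,\BT}$ in \cite[\S4]{EGmoduli} via Breuil--Kisin modules with descent data, and the identity $i \circ \tilde g = g$ holds because both compositions recover the $G_{K_\infty}$-restriction of $T^*(\gM)$. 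Next I would upgrade $i$ from a monomorphism to a closed immersion: this requires properness, which I would establish using that $\cX^{\tau,\BT}$ is $\varpi$-adically topologically of finite type over $\Spf \cO$ and that its image in $\cR^{\dd}$ lies in the bounded substack cut out by fixed Hodge--Tate weights. Applying the universal property of the scheme-theoretic image (in the sense of \cite{emertongeeproper}) to the factorization $g = i \circ \tilde g$ then yields the closed immersion $j : \cZ^\tau \hookrightarrow \cX^{\tau,\BT}$.

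Finally, to show $j$ is an isomorphism, I would check at each finite type point $\rhobar$ that $j$ induces an isomorphism on versal deformation rings. The versal deformation ring of $\cX^{\tau,\BT}$ at $\rhobar$ is the potentially BT deformation ring $R^{\tau,\BT}_{\rhobar}$ by \cite[Thm.~4.8.12]{EGmoduli}. On the $\cZ^\tau$ side, a Breuil--Kisin module lifting $\rhobar$ (which exists by Kisin's theory, \cite{kis04}) gives a versal cover $\Spf R^{\tau,\BT}_{\rhobar} \to \cC^{\tau,\BT} \to \cZ^\tau$, so the versal ring of $\cZ^\tau$ at $\rhobar$ is a quotient of $R^{\tau,\BT}_{\rhobar}$. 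Since $j$ is a closed immersion into $\cX^{\tau,\BT}$ whose versal ring is exactly $R^{\tau,\BT}_{\rhobar}$, the induced surjection on versal rings must be an isomorphism. Hence $j$ is an isomorphism, and we set $h^\tau := j^{-1}$; this is the identity on Galois representations over complete local Noetherian $\cO$-algebras by construction.

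The main obstacle is verifying that $i$ is a closed immersion rather than merely a monomorphism, which requires combining Galois-theoretic boundedness results from \cite{EGmoduli} with the formal stack structure of $\cX^{\tau,\BT}$. A secondary subtlety is the versal-ring matching in the final step: the scheme-theoretic image construction defining $\cZ^\tau$ could in principle shrink the formal structure inherited from $\cC^{\tau,\BT}$, and one must verify that the Breuil--Kisin cover $\cC^{\tau,\BT} \to \cZ^\tau$ already surjects onto the full potentially BT deformation ring at each finite type point.
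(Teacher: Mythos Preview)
Your strategy differs from the paper's, and the ``main obstacle'' you flag is in fact a genuine gap you have not closed. To invoke the universal property of the scheme-theoretic image and factor $\cZ^\tau$ through $\cX^{\tau,\BT}$, you need $i : \cX^{\tau,\BT} \to \cR^{\dd}$ to be a \emph{closed immersion}, not merely a monomorphism. But $\cR^{\dd}$ (equivalently $\cR_2$) is not an algebraic stack, and your appeal to ``boundedness'' and ``properness'' does not establish that the image of $\cX^{\tau,\BT}$ is closed; properness is not even the right notion when the target is not of finite type. The paper never claims $f$ is a closed immersion---only a monomorphism---and arranges the argument so that nothing more is needed. Your construction of $\tilde g : \cC^{\tau,\BT} \to \cX^{\tau,\BT}$ is also not justified: the assignment $\gM \mapsto T^*(\gM)$ produces a $G_K$-representation only for local Artinian $A$ with finite residue field, not for general $p$-adically complete $A$, and promoting this to a morphism of formal stacks requires exactly the kind of comparison between the Breuil--Kisin constructions of \cite{cegsB} and \cite{EGmoduli} that the theorem is meant to supply.

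The paper instead forms the fibre product $\cY := \cX^{\tau,\BT} \times_{\cR_2} \cZ^\tau$. Since $g : \cZ^\tau \to \cR_2$ \emph{is} a closed immersion (by the definition of $\cZ^\tau$ as a scheme-theoretic image), the projection $i_X : \cY \to \cX^{\tau,\BT}$ is a closed immersion and $\cY$ is a $p$-adic formal algebraic stack. Both projections $i_X$ and $i_Z$ are then monomorphisms of $p$-adic formal algebraic stacks, essentially surjective on $A$-points for every finite flat $\cO$-algebra $A$ (both sides parametrize potentially Barsotti--Tate representations of type~$\tau$), with targets that are $\cO$-flat and analytically unramified. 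A criterion from \cite{LLLM-models} (their Lemma~7.2.6(1)) then forces each projection to be an isomorphism, and one sets $h^\tau := i_Z \circ i_X^{-1}$. This avoids altogether the need to upgrade $f$ beyond a monomorphism, and replaces your versal-ring matching by a direct structural argument.
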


\begin{proof}
Let $g : \cZ^\tau \to \cR_2$ be  the closed immersion $\cZ^{\tau} \to \cR^{\dd}$ followed by the isomorphism $\cR^{\dd} \to \cR_2$. Let $\cY$ sit at the corner of the pullback square
\[
  \begin{tikzcd}
 &\cY \arrow{r}{i_Z} \arrow[swap]{d}{i_X} & \cZ^{\tau} \arrow{d}{g} \\
 & \cX^{\tau,\BT} \arrow{r}{f} & \cR_{2}.
   \end{tikzcd}
\]
Since both $f,g$ are monomorphisms, so are $i_Z$ and $i_X$. In fact $g$ is a closed immersion, and therefore so is $i_X$;\ in particular $i_X$ is representable, hence representable by algebraic stacks in the sense of \cite[Def~3.1(2)]{EmertonFormal} (see also Remark~3.2 of \emph{loc. cit.}). It follows from \cite[Lem.~7.9]{EmertonFormal} that $\cY$ is a $p$-adic formal algebraic stack over $\Spf(\cO)$. Each of $\cX^{\tau,\BT}$, $\cZ^{\tau}$, and $\cY$ are topologically of finite type over $\cO$, e.g.\ because their special fibers are of finite type over $\F$.

Over any finite flat $\cO$-algebra $A$, the stacks $\cX^{\tau,\BT}$ and $\cZ^\tau$ have the same $A$-points, in the sense that $\cX^{\tau,\BT}(A)$ and $\cZ^\tau(A)$ each correspond to potentially Barsotti--Tate representations of type $\tau$ on projective $A$-modules. We deduce that functors $i_X(A)$ and $i_Z(A)$ are both essentially surjective.

Finally, $\cX^{\tau,\BT}$ and $\cZ^{\tau}$ are each flat over $\Spf(\cO)$, and they are each analytically unramified in the sense of \cite[Def.~8.22]{EmertonFormal}:\ as noted in the paragraph before \cite[Warning~7.2.1]{LLLM-models}, this is equivalent to having reduced versal rings at all finite type points, which follows from \cite[Cor~5.2.19]{cegsB} for  $\cZ^{\tau}$  and from \cite[Prop.~4.8.10]{EGmoduli} for $\cX^{\tau,\BT}$. (Recall that the deformation rings $R^{\tau,\BT}_{\rhobar}$ are reduced by definition.)

Taking all these observations together, we see that the two maps $i_X$ and $i_Z$ each satisfy the hypotheses of \cite[Lem.~7.2.6(1)]{LLLM-models}, hence  each is an isomorphism. We obtain an isomorphism by taking $h^{\tau} := i_Z \circ i_X^{-1} : \cX^{\tau,\BT} \to \cZ^{\tau}$. The statement about Galois representations then follows from the corresponding statements for $f$ and $g$ (together with full faithfulness of restriction from $G_K$ to $G_{K_\infty}$).
\end{proof}

As an application of Theorem~\ref{thm:EG-CEGS-isom}, we establish the existence and basic properties of loci in $\cZ^{\dd,1}$ of representations satisfying certain $p$-adic Hodge theoretic conditions.

\begin{cor}\label{cor:loci-in-Zdd}
Suppose $\underline{r}$ and $\tau$ are Hodge and inertial types with the property that no twist of a tr\`es ramifi\'ee representation has a potentially crystalline lift of type $\underline{r}$ and $\tau$. Then there is a unique reduced closed substack $\cZ^{\tau,\underline{r}}_{\red} \subset \cZ^{\dd,1}$ with the property that a representation $\rhobar : G_K \to \GL_2(\F')$  lies in $\cZ^{\tau,\underline{r}}_{\red}(\F')$ if and only if $\rhobar$ has a potentially crystalline lift of type $\underline{r}$ and $\tau$. {\upshape(}Here $\F'/\F$ is any finite extension.{\upshape)}

Furthermore $\cZ^{\tau,\underline{r}}_{\red}$ is equidimensional of dimension equal to that of $\cX^{\crys,\tau,\underline{r}}_{2,\red}$, and the irreducible components of $\cZ^{\tau,\underline{r}}_{\red}$ are in bijection with those of  $\cX^{\crys,\tau,\underline{r}}_{2,\red}$, such that corresponding components have the same finite type points.
\end{cor}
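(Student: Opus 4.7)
The plan is to define $\cZ^{\tau,\underline{r}}_{\red}$ as the scheme-theoretic image, equipped with its reduced structure, of the composition
\[
g : \cX^{\crys,\tau,\underline{r}}_{2,\red} \hookrightarrow \cX_{2} \xrightarrow{f} \cR_{2} \cong \cR^{\dd}.
\]
The first step is to verify that this scheme-theoretic image is contained in $\cZ^{\dd,1}$. By hypothesis, every finite type point $\rhobar$ of $\cX^{\crys,\tau,\underline{r}}_{2,\red}$ is not a twist of a tr\`es ramifi\'ee representation, hence admits a tamely potentially Barsotti--Tate lift (using the characterization of $\cZ^{\dd,1}$ recalled in the introduction). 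Such a lift gives rise to an object of $\cC^{\dd,\BT,1}$ whose associated \'etale $\varphi$-module is exactly $g(\rhobar)$, so that $g(\rhobar) \in |\cZ^{\dd,1}|$. Since $\cX^{\crys,\tau,\underline{r}}_{2,\red}$ is Jacobson of finite type over $\F$ and $\cZ^{\dd,1}$ is closed in its ambient stack, this forces the entire set-theoretic image into $|\cZ^{\dd,1}|$, and reducedness of both stacks then gives a closed immersion of $\cZ^{\tau,\underline{r}}_{\red}$ into $\cZ^{\dd,1}$. The finite type points of $\cZ^{\tau,\underline{r}}_{\red}$ are then exactly the claimed representations, by surjectivity of $g$ onto its scheme-theoretic image; uniqueness of $\cZ^{\tau,\underline{r}}_{\red}$ as a reduced closed substack is immediate from the Jacobson property.

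For the equidimensionality and the bijection of irreducible components, I would show that $g$, viewed as a morphism $\cX^{\crys,\tau,\underline{r}}_{2,\red} \to \cZ^{\tau,\underline{r}}_{\red}$, is an isomorphism, following the template of Theorem~\ref{thm:EG-CEGS-isom}. The first input is an analog of Lemma~\ref{lemma:f-is-monomorphism} for general Hodge types, showing that $f$ restricted to $\cX^{\crys,\tau,\underline{r}}_2$ is a monomorphism. This in turn rests on an analog of Lemma~\ref{lem:restriction-lemma} for general potentially crystalline representations of $G_K$ over local Artinian $\cO$-algebras with finite residue field, which follows because, as in the potentially Barsotti--Tate case, the Galois representation is recovered from its Breuil--Kisin module by Kisin's theory;\ the globalization to arbitrary finite type $\cO/\varpi^a$-algebras then proceeds as in the last paragraph of the proof of \cite[Prop.~7.2.11]{LLLM-models}.

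The main obstacle I anticipate is upgrading this monomorphism to an isomorphism of reduced stacks:\ concretely, showing that the finite type monomorphism $g$, which is bijective on finite type points by construction, is actually an isomorphism of reduced stacks. One route is to show $g$ is proper, hence a closed immersion (since a proper monomorphism is a closed immersion), and then an isomorphism by bijectivity on finite type points in the Jacobson setting together with reducedness of both source and target;\ properness should come from the proper deformation theory of $\cX^{\crys,\tau,\underline{r}}_{2,\red}$ over its image. An alternative is to replicate the pullback square argument used in the proof of Theorem~\ref{thm:EG-CEGS-isom}, appropriately adapted from the setting of flat $p$-adic formal algebraic stacks to the reduced finite type setting over $\F$. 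Once the isomorphism $g$ is established, the equidimensionality of $\cZ^{\tau,\underline{r}}_{\red}$ and the bijection of components with matching finite type points transfer directly from the known properties of $\cX^{\crys,\tau,\underline{r}}_{2,\red}$ in \cite[Thm.~4.8.14]{EGmoduli}.
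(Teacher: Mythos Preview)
Your strategy aims to show directly that $g : \cX^{\crys,\tau,\underline{r}}_{2,\red} \to \cZ^{\tau,\underline{r}}_{\red}$ is an isomorphism by extending Lemmas~\ref{lem:restriction-lemma} and~\ref{lemma:f-is-monomorphism} from the potentially Barsotti--Tate case to arbitrary potentially crystalline types. This is where the gap lies. The proof of Lemma~\ref{lemma:f-is-monomorphism} rests on the concrete fact that $A$-points of $\cX^{\tau,\BT}$, for Artinian $A$, are genuinely potentially finite flat $G_K$-modules, so that the classical full faithfulness of restriction to $G_{K_\infty}$ on finite flat group schemes applies. For general $\underline{r}$, $A$-points of $\cX_2^{\crys,\tau,\underline{r}}$ are just $(\varphi,\Gamma)$-modules satisfying the closed conditions cutting out this substack; there is no evident sense in which they are ``torsion crystalline'', and Kisin's full faithfulness results are for lattices, not for Artinian quotients. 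Your appeal to ``Kisin's theory'' does not supply the missing input. The properness route you sketch in the final paragraph has the same problem: you have not established that the map from $\cX^{\crys,\tau,\underline{r}}_{2,\red}$ to its image in $\cR_2$ is proper, and the pullback-square variant is circular since you defined the target as a scheme-theoretic image.

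The paper sidesteps all of this by working componentwise and reducing to the already-proved Barsotti--Tate case. The crucial observation is that, under the tr\`es ramifi\'ee hypothesis, each irreducible component $\cY_i$ of $\cX^{\crys,\tau,\underline{r}}_{2,\red}$ lies inside $\cX^{\sigma_i}_{2,\red}$ for some non-Steinberg Serre weight $\sigma_i$, hence inside $\cX^{\tau_i,\BT}_{\red}$ for a suitable tame type $\tau_i$. One then simply transports $\cY_i$ to $\cZ^{\dd,1}$ via the isomorphism $h^{\tau_i}$ of Theorem~\ref{thm:EG-CEGS-isom}, and takes the reduced union of the resulting $\cZ_i$'s. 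Since each $\cY_i \cong \cZ_i$ by construction, the dimension and component statements are immediate, with no new monomorphism or properness arguments required. Your first paragraph already contains the seed of this idea (every finite type point lands in $|\cZ^{\dd,1}|$), but the paper exploits it at the level of irreducible components rather than trying to globalize.
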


\begin{proof}
Uniqueness is immediate  from the fact that reduced closed substacks of reduced stacks are characterized by their finite type points. In particular the stack $\cZ^{\underline{r},\tau}_{\red}$ must be independent of the various choices in the construction that follows. 

The  reduced algebraic stack $\cX^{\crys,\tau,\underline{r}}_{2,\red}$ has  finitely many irreducible components $\cY_i$. By the hypothesis that no point of $\cX^{\crys,\tau,\underline{r}}_{2,\red}$ is a twist of a tr\`es ramifi\'ee representation, it follows that 
each $\cY_i$ is contained in the union $\cup_{\sigma} \cX^{\sigma}_{2,\red}$, the union taken over all non-Steinberg Serre weights;\ thus $\cY_i \subset \cX^{\sigma_i}_{2,\red}$ for some non-Steinberg Serre weight~$\sigma_i$. Let $\tau_i$ be any tame type such that $\sigma_i$ is a Jordan--H\"older factor of the reduction mod~$p$ of $\sigma(\tau_i)$, so that  $\cX^{\sigma_i}_{2,\red}$ (and therefore $\cY_i$) is contained in $\cX^{\tau_i,\BT}_{\red}$. Define $\cZ_i \subset \cZ^{\tau,1}$ to be the image of $\cY_i$ under the isomorphism $h^{\tau_i}$ of Theorem~\ref{thm:EG-CEGS-isom}. Then the (reduced) union of the $\cZ_i$'s inside $\cZ^{\dd,1}$ has the desired property.

By construction $\cY_i$ and $\cZ_i$ are isomorphic and have the same finite type points. Since each $\cY_i$ is irreducible and there are no inclusions between the $\cY_i$'s, consideration of finite type points shows that the same must be true of the $\cZ_i$'s inside $\cZ^{\dd,1}$. This gives the final statement.
\end{proof}

\subsection{Irregular loci in \texorpdfstring{$\cZ^{\dd,1}$}{Z\^{}dd,1}}\label{sec:combinatorial-results}
We now resume the running assumption that the extension $K/\Qp$ is unramified, and introduce the following terminology.

\begin{defn}\label{def:hodge-type-names}
We say that the Hodge type~$\underline{r}$ is 
\begin{itemize}
\item \textit{$p$-bounded} if $r_{i,1} - r_{i,2} \leq p$ for all $i$, 
\item \textit{Steinberg} if  $r_{i,1} - r_{i,2} = p$ for all $i$, and 
\item \textit{regular} if $r_{i,1} - r_{i,2} > 0$ for all $i$. \end{itemize} 
\end{defn}
\begin{lem}\label{lem:p-bounded-ns} Assume that the Hodge type $\underline{r}$ is $p$-bounded and non-Steinberg. Then~$\underline{r}$ together with the trivial type satisfies the hypothesis of Corollary~\ref{cor:loci-in-Zdd}, giving a reduced closed substack $\cZ^{\underline{r}}_{\red} := \cZ^{\mathrm{triv},\underline{r}}$ of $\cZ^{\dd,1}$ whose finite type points are precisely those having a crystalline lift of type $\underline{r}$.
\end{lem}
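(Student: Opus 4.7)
The plan is to prove the contrapositive: if a twist of a tr\`es ramifi\'ee representation $\rhobar$ admits a crystalline lift of $p$-bounded Hodge type $\underline{r}$, then $\underline{r}$ must be Steinberg.

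First I would reduce to the case where $\rhobar$ is tr\`es ramifi\'ee in standard form. Since $K/\Qp$ is unramified, there exist crystalline characters with arbitrary labeled Hodge--Tate weights (built from Lubin--Tate characters at each embedding), and tensoring $\rho$ by such a character shifts $r_{j,1}$ and $r_{j,2}$ simultaneously at each $\kappa_j$. The differences $r_{j,1} - r_{j,2}$ are invariants of this twisting, so the hypotheses ``$p$-bounded'' and ``non-Steinberg'' are preserved.

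For the regular case ($r_{j,1} > r_{j,2}$ for all $j$), the argument invokes the theorem of Gee--Liu--Savitt from \cite{gls12}: the existence of a crystalline lift of regular $p$-bounded type $\underline{r}$ forces $\sigmabar_{\vec{t},\vec{s}} \in W(\rhobar)$ with $s_j = r_{j,1} - r_{j,2} - 1 \in [0, p-1]$ and $t_j = 1 - r_{j,1}$. Since a tr\`es ramifi\'ee $\rhobar$ satisfies $W(\rhobar) = \{\chi \otimes \St\}$ for some character $\chi$ (a standard consequence of the weight part of Serre's conjecture in this case), the weight $\sigmabar_{\vec{t},\vec{s}}$ must be Steinberg; but that forces $s_j = p-1$ for all $j$, i.e.\ $r_{j,1} - r_{j,2} = p$ for all $j$, contradicting the non-Steinberg hypothesis.

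For the irregular case (some $r_{j_0,1} = r_{j_0,2}$), my plan is to reduce to the regular case by producing a second crystalline lift $\rho'$ of the same $\rhobar$ whose Hodge type $\underline{r}'$ is regular, $p$-bounded, and non-Steinberg. Concretely, at each irregular embedding $j_0$ one would locally perturb the Breuil--Kisin module $\fM$ of $\rho$ so as to shift the $\kappa_{j_0}$-labeled Hodge--Tate weights from $\{n,n\}$ to $\{n+1,n\}$, while preserving the mod-$\varpi$ reduction $\bar\fM$ that realises $\rhobar$. Such a shift takes the difference from $0$ to $1 \leq p$, so $\underline{r}'$ remains $p$-bounded, and since the new difference is $1 < p$ it is still non-Steinberg; the regular case then applies to $\rho'$ and delivers a contradiction.

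The hard part will be the irregular case, i.e.\ producing the perturbed lift $\rho'$. This is a local-at-$j_0$ modification of the Kisin module and should be viewed as a very special case of the partial weight-shifting operators $\mu_\kappa, \nu_\kappa$ appearing in Theorem~\ref{thm:operators-intro}; it is significantly simpler because we need only exhibit a single additional lift, not a family. An alternative and more self-contained route would be to argue directly from the structure of $\bar\fM$: for $\rhobar$ tr\`es ramifi\'ee, the class distinguishing ``tr\`es'' from ``peu'' ramifi\'ee can be read off the Breuil--Kisin module, and its presence should be shown to be incompatible with the existence of a crystalline lift all of whose embedded heights are strictly less than~$p$ at some $\kappa_j$. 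Either route circumvents the use of Theorem~\ref{thm:operators-intro} itself, which is only proved later in the paper.
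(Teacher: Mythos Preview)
Your regular case matches the paper's argument closely. The irregular case, however, has a genuine gap, and the paper takes a completely different route there.

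The weight shift you propose --- changing the $\kappa_{j_0}$-labeled Hodge--Tate weights from $\{n,n\}$ to $\{n+1,n\}$ while leaving all other embeddings untouched --- is \emph{not} a special case of the operators $\mu_\kappa,\nu_\kappa,\theta_\kappa$ of Theorem~\ref{thm:operators-intro}. Each of those operators alters the weights at \emph{two adjacent} embeddings (and shifts one of them by $p$, not by $1$); none of them performs a single-embedding unit shift. So the claim that your construction is ``a very special case'' of those operators, and hence ``significantly simpler'', is not justified. You would need an independent argument that such a single-embedding perturbation of the Breuil--Kisin module exists and remains crystalline with the same reduction, and you have not supplied one. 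The fallback you mention (reading off the tr\`es/peu distinction from $\bar{\fM}$) is likewise only a sketch. There is also the circularity issue you flag: the proof of Theorem~\ref{thm:operator-lifts-exist} in the paper explicitly invokes Lemma~\ref{lem:p-bounded-ns}, so appealing to those operators here is not available.

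By contrast, the paper's proof of the irregular case is short and direct, and does not touch Breuil--Kisin modules at all. A tr\`es ramifi\'ee $\rhobar$ is reducible, and one cites that for reducible $\rhobar$ with a crystalline lift of type $\underline{r}$ the ratio of the diagonal characters restricts to inertia as $\prod_i \omega_i^{t_i}$ with $t_i = \pm(r_{i,1}-r_{i,2})$. Tr\`es ramifi\'ee forces this ratio to be cyclotomic, i.e.\ $\sum_i p^{f-1-i} t_i \equiv \tfrac{p^f-1}{p-1}\pmod{p^f-1}$. Since $\underline{r}$ is irregular some $t_j=0$; after a cyclic shift one may take $t_{f-1}=0$, and then the left side is divisible by $p$ while the right side (as an integer) is $1\pmod p$, which combined with the bound $|t_i|\le p$ rules out any solution. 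This elementary congruence argument is what you are missing.
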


\begin{proof}
Suppose that $\rhobar : G_K \to \GL_2(\F')$ has a crystalline lift of Hodge type $\underline{r}$. We must show that $\rhobar$ is not tr\`es ramifi\'ee.
If $\underline{r}$ is regular, then there is a non-Steinberg Serre weight $\sigma$ such that $\rhobar$ has a crystalline lift of Hodge type $\underline{r}$ if and only if $\sigma \in W(\rhobar)$, and we conclude by \cite[Lem.~A.5(2)]{cegsC}. So we may assume for the remainder of the proof that $\underline{r}$ is irregular.

If $\rhobar$ is reducible, then by \cite[Chapter 5, Cor~2.7]{HWThesis}  the ratio of the diagonal characters of $\rhobar$ has restriction to inertia equal to  $\prod_{i=0}^{f-1} \omega_i^{t_i}$, where 
for some subset $J \subset \Z/f\Z$ we have $t_i = r_{i,1}-r_{i,2}$ for $i \in J$ and $t_i = r_{i,2} - r_{i,1}$ for $i \not\in J$. In particular $t_i \in [-p,p]$ for all $i$. We will show that  the ratio of characters cannot be cyclotomic, and indeed that we cannot have 
\begin{equation}\label{eq:t-not-cyclotomic} \sum_{i=0}^{p-1} p^{f-1-i} t_i \equiv \tfrac{p^f-1}{p-1} \pmod{p^f-1}. 
\end{equation}
Since $\underline{r}$ is irregular, we have $t_j=0$ for some $j$, and without loss of generality (e.g.\ multiplying both sides by $p^{f-1-j}$) we may assume $t_{f-1}=0$. Then the left-hand side of \eqref{eq:t-not-cyclotomic}, considered as an integer, is divisible by $p$;\ while the right-hand side, again considered as an integer, is $1$ modulo $p$. We must therefore have an equality
\[ \sum_{i=0}^{p-1} p^{f-1-i} t_i = \tfrac{p^f-1}{p-1} + k(p^f-1) \]
where $k \equiv 1 \pmod{p}$. We have $\left| \sum_{i=0}^{p-1} p^{f-1-i} t_i \right| < p \cdot \tfrac{p^f-1}{p-1}$, with strict inequality because $t_{f-1} = 0$. This already rules out $k \ge 1$. Similarly
\[ (1-p)(p^f-1) + \tfrac{p^f-1}{p-1} = (2p-p^2) \tfrac{p^f-1}{p-1} \le -p \cdot  \tfrac{p^f-1}{p-1}\]
since $p \ge 3$,
so $k \le 1-p$ is ruled out as well. Therefore \eqref{eq:t-not-cyclotomic} has no solutions.
\end{proof}

To each non-scalar tame type $\tau$ and profile $J \subset \Z/f'\Z$, we will now associate a Hodge type $r(\tau,J)$, or more precisely a Hodge type up to equivalence under an equivalence relation that we will define in the next two paragraphs.

Let us write $\Lambda \subset \Z^{f}$ for the set of tuples $\underline{\lambda} = (\lambda_i)$ such that the inertial character $\prod_{i=0}^{f-1} \omega_i^{\lambda_i}$ is trivial. Concretely, this is the set of tuples $\underline{\lambda}$ such that 
\[ \sum_{i=0}^{f-1} p^{f-i} \lambda_i \equiv 0 \pmod{p^f-1}. \]
Interpreting $\underline{\lambda}$ as a Hodge type of rank $1$, we see that  $\Lambda$ can equivalently be described as the set of Hodge types of crystalline characters of $G_K$ that are trivial  modulo $p$.

\begin{defn}\label{def:equiv-on-Hodge-types} If $\underline{r}$ is a Hodge type and $\underline{\lambda} \in \Z^f$, we define  $\underline{r} + \underline{\lambda}$ to be the Hodge type $\{ r_{i,j} + \lambda_i \}_{i,j}.$ We define an equivalence relation $\sim$ on the set of Hodge types by taking $\underline{r} \sim \underline{r}'$ if and only if $\underline{r}' = \underline{r} + \underline{\lambda}$ with $\lambda \in \Lambda$. If $\underline{r} \sim \underline{r}'$ then evidently $\cX^{\underline{r}}_{\red} = \cX^{\underline{r}'}_{\red}$, thanks to the description of $\Lambda$ in terms of crystalline characters with trivial reduction mod $p$.
\end{defn}

To the pair $\tau$ and $J$, we have already associated a tuple of integers $s_{J,i}$ and a character $\Theta_J$, as in equations \eqref{def:s_J} and \eqref{def:theta_J}. Write
\[ \Theta_J = \prod_{i=0}^{f-1} \omega_i^{\theta_{J,i}} \] for some integers $\theta_{J,i}$. The tuple $\theta_J = (\theta_{J,i})_i$ is not uniquely defined, but it is unique up to translation by an element of $\Lambda$. We then define $r(\tau,J)$ 
by the formula
\begin{align}
r(\tau, J) := \{-s_{J, i} - \theta_{J, i}, 1 - \theta_{J, i}\}_{i \in \Z/f\Z}
\end{align}
and obtain a Hodge type up to equivalence under $\sim$. Recall from Sections~\ref{sec:notation:HT} and~\ref{sec:ill-sw} that in our conventions, and for profiles $J \in \cP_{\tau}$, we have $\sigmabar(\tau)_{J} \in W(\rhobar)$ if and only if $\rhobar$ has a crystalline lift of Hodge type $r(\tau,J)$.

\begin{defn}
We define $\cN^\tau(J)$ to be the stack $\cZ^{\underline{r}}_{\red}$ for any representative $\underline{r}$ of $r(\tau,J)$. This is well-defined by Lemma~\ref{lem:p-bounded-ns} and the final sentence of Definition~\ref{def:equiv-on-Hodge-types}.
\end{defn}

If $J \in \cP_{\tau}$ then $\cN^{\tau}(J) = \overline{\cZ}(\sigmabar(\tau)_J)$ is an irreducible component of $\cZ^{\dd,1}$. However, if instead $J \not\in \cP_{\tau}$ then the finite type points of $\cN^{\tau}(J)$ are the representations with crystalline lifts of (irregular) Hodge type $r(\tau,J)$, and $\cN^{\tau}(J)$ has codimension $\#\{ i : s_{J,i} = -1 \}$. The following combinatorial result shows that $\cZ^{\underline{r}}_{\red}$ arises as one of the loci $\cN^{\tau}(J)$ for every $p$-bounded and non-Steinberg Hodge type $\underline{r}$.

\begin{prop}\label{prop:existence-tame-type}
For each $p$-bounded and non-Steinberg Hodge type $\underline{r}$, we can find a non-scalar tame type $\tau$ and a profile $J \subset \Z/f'\Z$ such that $\underline{r} \sim r(\tau, J)$, or equivalently such that $\cZ^{\underline{r}}_{\red} = \cN^\tau(J)$.
\end{prop}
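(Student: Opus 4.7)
First, I unwind the equivalence $r(\tau, J) \sim \underline{r}$. Setting $d_i := r_{i,1} - r_{i,2} \in [0, p]$, and using $s_{J,i} \geq -1$ to see that $1 - \theta_{J,i} \geq -s_{J,i} - \theta_{J,i}$ so that the ordering of the unordered pair is fixed, the equivalence splits into two conditions: $s_{J,i} = d_i - 1$ for all $i \in \Z/f\Z$, and $\theta_{J,i} \equiv 1 - r_{i,1} \pmod{\Lambda}$.

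The main step is to realize $s_{J,i} = d_i - 1$ combinatorially. I will encode the profile $J$ by a function $\chi_J \colon \Z/f\Z \to \{0,1\}$, namely the indicator of $J$ directly in the principal series case (where $f' = f$), and the indicator of $J \cap \{0, \dots, f-1\}$ in the cuspidal case (where the relation $i \in J \Leftrightarrow i + f \notin J$ makes such a restriction lossless). The formula \eqref{def:s_J} combined with the target $s_{J,i} = d_i - 1$ translates into a local constraint on $\chi_J$ at each $i$: one must have $\chi_J(i-1) \neq \chi_J(i)$ if $d_i = 0$, $\chi_J(i-1) = \chi_J(i)$ if $d_i = p$, and no constraint if $d_i \in (0, p)$; in each case the integer $\gamma_i \in [0, p-1]$ is then uniquely determined by the relevant sub-case of \eqref{def:s_J}. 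In the principal series case these constraints run around the cycle $\Z/f\Z$ in the obvious way, but in the cuspidal case the constraint at $i = 0$ is reversed (because $2f - 1 \in J \Leftrightarrow f - 1 \notin J$). A parity count then yields: if some $d_i \in (0, p)$, both principal series and cuspidal realizations exist; if all $d_i \in \{0, p\}$, exactly one of the two cases works, determined by the parity of $N_0 := \#\{i : d_i = 0\}$. Since $\underline{r}$ is non-Steinberg, $N_0 \geq 1$ in this rigid subcase, so at least one realization always exists. I expect this combinatorial step---in particular the seam analysis at $i = 0$ in the cuspidal case and the parity accounting---to be the main technical obstacle.

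It remains to arrange $\tau$ to be non-scalar and to match $\Theta_J$ via twisting. Non-scalarity is automatic in the cuspidal case (the relation $\gamma_i + \gamma_{i+f} = p-1$ prevents $\eta = \eta'$). In the principal series case, a direct check of the four sub-cases of \eqref{def:s_J} shows that in the rigid subcase one has $\gamma_i = (p-1)(1 - \chi_J(i))$, so $\chi_J$ being non-constant (forced by $N_0 \geq 1$) produces both $0$ and $p-1$ among the $\gamma_i$'s, which easily yields a nontrivial character $\eta/\eta'$; when some $d_i$ is free, non-scalarity is a trivial adjustment of the value of $\gamma_i$ at that index. Finally, twisting $\tau$ by any character $\psi$ of $I_K$ that extends to $G_K$ leaves $J$ and the $\gamma_i$'s unchanged, but multiplies $\Theta_J$ by $\psi$ (viewed as a character of $k^\times$ via local class field theory); since every character of $k^\times$ arises this way and $(\theta_{J,i})$ is only defined modulo $\Lambda$, we can shift the tuple freely within $\Z^f / \Lambda$, in particular to $(1 - r_{i,1})_i$, completing the proof.
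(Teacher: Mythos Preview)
Your proof is correct and follows essentially the same approach as the paper's. Both arguments reduce to constructing $(\tau,J)$ with $s_{J,i} = d_i - 1$ via the transition/non-transition dichotomy (forced at $d_i \in \{0,p\}$, free otherwise), use a parity count to decide between principal series and cuspidal, handle non-scalarity in the rigid all-$\{0,p\}$ principal series case by exhibiting both $0$ and $p-1$ among the $\gamma_i$, and finish by twisting to match $\Theta_J$. The only cosmetic differences are that the paper builds $\chi_J$ index-by-index and secures non-scalarity in the flexible case by forcing an odd transition count (hence cuspidal), whereas you phrase the construction globally and invoke a direct adjustment of $\gamma_i$; your cleaner formula $\gamma_i = (p-1)(1-\chi_J(i))$ in the rigid case is equivalent to the paper's ``consecutive transitions'' observation.
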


Recall from \cite{cegsC} that for $i \in \bZ/f'\bZ$ and each profile $J$, we say that $(i-1, i)$ is a \textit{transition} (at $i$) if $\# \{i-1, i\} \cap J = 1$, and a \textit{non-transition} otherwise.

\begin{proof} Set $s_i = r_{i,1} - r_{i,2} - 1 \in [-1,p-1]$. Recall that for each non-scalar tame type $\tau = \eta \oplus \eta'$ and each profile $J$ there is an associated tuple of integers $s_{J,i}$ given by the formula
\begin{align}\label{def:s_J_redux}
&s_{J, i} := \begin{cases}
p-1-\gamma_i - \delta_{J^{c}}(i) &\text{ if } i-1\in J, \\
\gamma_i - \delta_J(i) &\text{ if } i-1 \not\in J,
\end{cases}
\end{align}
with the integers $\gamma_i \in [0,p-1]$ coming from writing the ratio $\eta/\eta'$ in terms of multiplicative lifts of fundamental characters as in \eqref{eq:gamma_i}. It suffices to produce a non-scalar tame type $\tau$ and a profile $J$ such that $s_i = s_{J,i}$ for all $i$, for then we will have  $\underline{r} = r(\tau \otimes \chi, J)$ for a suitable character $\chi$ of $G_K$.

There are evidently $p^f$ possibilities for the tuple $\gamma_0,\ldots,\gamma_{f-1}$. (Recall that in the cuspidal case we then have $\gamma_{i+f} = p-1-\gamma_i$.) Each of these $p^f$ possibilities can arise from some cuspidal type $\eta \oplus \eta^{p^f}$. To see this, note for example that there are $p^{2f} - p^f$ possibilities for the character $\eta$, and each possibility for the ratio $\eta/\eta^{p^f}$ arises from $p^f-1$ different $\eta$'s. In the principal series case every possibility can arise except the case $\gamma_i = 0$ for all $i$, or the case $\gamma_i = p-1 $ for all $i$, since these would give a scalar type.

We now define a tuple $\gamma_0,\ldots,\gamma_{f-1}$ and a profile $J$ by the following procedure. First, choose arbitrarily whether or not $-1$ lies in $J$. Then for each $0 \le i \le f-1$ in turn, we proceed as follows.
\begin{itemize}
\item If $s_i \in [0,p-2]$, we choose arbitrarily whether or not $i \in J$, and then define $\gamma_i$ by the formula     
\begin{align}\label{def:gamma_i-good}
&\gamma_i := \begin{cases}
p-1-s_i - \delta_{J^{c}}(i) &\text{ if } i-1\in J, \\
s_i + \delta_J(i) &\text{ if } i-1 \not\in J,
\end{cases}
\end{align}
\item If instead $s_i \in \{-1,p-1\}$, then exactly one of the two possibilities for $i \in J$ or $i \not\in J$ in \eqref{def:gamma_i-good} yields a $\gamma_i$ in the range $[0,p-1]$, and we make that choice. Observe for what follows that if $s_i = -1$ this requires making $(i-1,i)$ a transition, and if $s_i = p-1$ this requires making $(i-1,i)$ a non-transition.
\end{itemize}
Finally, if at the end of this procedure we have both $-1,f-1\in J$ or both $-1,f-1 \not\in J$ then we can form a principal series type with profile $J$ and yielding the chosen integers $\gamma_0,\ldots,\gamma_{f-1}$;\ while if $-1 \in J$ and $f-1 \not\in J$ or vice-versa then $J$ can be extended to a profile for a cuspidal type yielding the chosen integers $\gamma_0,\ldots,\gamma_{f-1}$.

This completes the construction, except for the possibility that by following the above procedure we may have constructed a scalar principal series type. We check that this can always be avoided. If at least one $s_i$ lies in the range $[0,p-2]$ then there is at least one $0 \le i < f$ where we could freely choose to create either a transition or a non-transition at $i$;\ therefore we may arrange to create an odd total number of transitions among $i$ with $0 \le i < f$, and thereby construct a cuspidal type. 

Thus we are reduced to the case where $s_i \in \{-1,p-1\}$ for all $i$, with an even number of transitions and therefore an even number of $-1$'s.
Since $\underline{r}$ is non-Steinberg, we do not have $s_i = p-1$ for all $i$, and therefore there at least two transitions. Let $i < i'$ be two consecutive transitions;\ that is, $j$ is a non-transition for each $i < j < i'$. Then $i \in J$ and $i' \not \in J$, in which case $\gamma_i = 0$ and $\gamma_{i'} = p-1$;\ or else the vice-versa. In either case there is at least one $0$  and at least one $p-1$ among the $\gamma_i$'s, and the type we have constructed is non-scalar.
\end{proof}

\begin{remark}\label{rem:cant-choose-j-when} Suppose $j$ satisfies $r_{j,1} - r_{j,2} \in [1,p-1]$. Then in Proposition~\ref{prop:existence-tame-type} we are always able to choose the tame type $\tau$ and profile $J$ so that for  this fixed $j$, there is either a transition at $j$ or not, as desired, with the following exceptions:
\begin{enumerate}
\item If $f=1$ and $r_{j,1} - r_{j,2} = 1$, then $(j-1, j)$ is forced to be a transition.
\item If $f\geq 2$ and 
\[
r_{i,1} - r_{i,2} =
\begin{cases}
p-1 &\text{ if } i = j ,\\
0 &\text{ if }  i =j+1, \text{ and }\\
p &\text{ otherwise }
\end{cases}
\]
then $(j-1, j)$ is forced to be a non-transition.
\end{enumerate}
Indeed, according to the proof of Proposition~\ref{prop:existence-tame-type}, the only possible obstruction is if there is a single $j$ satisfying $r_{j,1} - r_{j,2} \in [1,p-1]$, and for one choice for whether or not there is a transition at $j$, our construction leads to a scalar principal series type. Then we are obligated to make the opposite choice. By an analysis similar to the argument in the last paragraph of the  proof of Proposition~\ref{prop:existence-tame-type}, the construction leading to the scalar principal series type must have either 0 or 2 transitions, and in the latter case~$j$ must be one of the two transitions. 

It therefore suffices to evaluate \eqref{def:s_J_redux} with $\gamma_i = 0$ for all $i$ or $\gamma_i = p-1$ for all~$i$, and for $J \subset \Z/f\Z$ of the form $\varnothing$ or $\{i',\ldots,j-1\}$ or their complements, and confirm which ones lead to a single $s_{J,i} = r_{i,1} - r_{i,2} - 1$ being in the range $[0,p-2]$. This leads to the two exceptions listed above. For example, the  exception (2) comes from choosing $\gamma_i = 0$ for all $i$ and $J = \{j\}^c$, as well as from choosing $\gamma_i = p-1$ for all $i$ and $J = \{j\}$.
\end{remark}

\subsection{Comparison of irregular loci and scheme-theoretic images of vertical components} \label{sec:comparison-section}
Our goal in this subsection is to establish the following, which shows that each $\cZ^{\tau}(J) \subset \cZ^{\dd,1}$ can be described as the closed substack whose finite type points admit certain crystalline lifts. 

\begin{theorem}\label{thm:irregular-locus} If $\tau$ is a non-scalar tame type and $J \subset \Z/f'\Z$ is any profile, then $\cN^{\tau}(J) = \cZ^{\tau}(J)$;\ in other words, the finite type points of $\cZ^{\tau}{(J)}$ are precisely the representations with crystalline lifts of Hodge type $r(\tau,J)$.
\end{theorem}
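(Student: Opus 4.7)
The plan is to combine the explicit matrix description of $\cZ^{\tau}(J)(\F')$ from Theorem~\ref{prop:modified-basis-of-invariants} with an extension of the Gee--Liu--Savitt \cite{gls12} correspondence between shapes of \'etale $\varphi$-modules and Hodge--Tate weights of crystalline lifts. By Theorem~\ref{prop:modified-basis-of-invariants}, a point $\rhobar \in \cZ^{\tau}(J)(\F')$ corresponds to an \'etale $\varphi$-module $M^0$ for $K$ with a basis in which the partial Frobenius matrices have the form
\[
B_{i}\begin{pmatrix} v & 0 \\ 0 & v^{-s_{J,i}} \end{pmatrix} v^{-\theta_{J,i}}, \qquad B_{i} \in \GL_2(\F'[\![v]\!]).
\]
Twisting $\rhobar$ by $\tilde\Theta_J^{-1}$ absorbs the overall $v^{-\theta_{J,i}}$ factor and, since $r(\tau,J) = \{-s_{J,i}-\theta_{J,i},\,1-\theta_{J,i}\}_{i}$, it suffices to prove that such a basis exists for $M^0$ if and only if $\rhobar \otimes \tilde\Theta_J^{-1}$ admits a crystalline lift with $\kappa_i$-labeled Hodge--Tate weights $\{-s_{J,i},1\}$.

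First I would dispose of the case $J \in \cP_\tau$, in which all $s_{J,i} \geq 0$, by quoting Theorem~\ref{thm:cegs-z-tau-j-HT-wets}. The main work is the general case. For the direction $\cZ^{\tau}(J) \subseteq \cN^{\tau}(J)$, I would pick arbitrary lifts $\tilde B_i \in \GL_2(\cO_E[\![v]\!])$ of the matrices $B_i$; the matrices $\tilde B_i \operatorname{diag}(v,v^{-s_{J,i}})$ then determine a $\varphi$-module over $\cO_E[\![v]\!][\tfrac{1}{v}]$ lifting $M^0$, which sits inside an ambient rank-$2$ Breuil--Kisin lattice after a suitable rescaling. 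Following the strategy of \cite{gls12}, I would then identify the corresponding $G_{K_\infty}$-representation (via $T_K$ or $T_K^\ast$, using Lemma~\ref{lem:gal-rep-comparison}) as the restriction of a crystalline $G_K$-representation with the claimed labeled Hodge--Tate weights, the key input being computations of weakly admissible filtrations compatible with the matrix shape. For the converse direction $\cN^{\tau}(J) \subseteq \cZ^{\tau}(J)$, I would start with a crystalline lift of $\rhobar \otimes \tilde\Theta_J^{-1}$ of the asserted Hodge type, apply Kisin's theory to produce a rank-$2$ $\varphi$-module over $\cO_E[\![v]\!]$, and invoke the irregular analogue of the GLS matrix normalization to exhibit an eigenbasis putting the partial Frobenius into the required diagonal-times-unit form; reduction mod $\varpi$ then produces a basis of $M^0$ of the shape demanded by Theorem~\ref{prop:modified-basis-of-invariants}.

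The principal obstacle is establishing the irregular analogue of the normal-form argument from \cite{gls12}, which is built for Breuil--Kisin modules of height in $[0,p-1]$ but here must accommodate a diagonal entry $v^{-1}$ whenever $s_{J,i} = -1$. This requires checking that the resulting object still corresponds to a crystalline (rather than merely de Rham or semistable) representation, and that the zero Hodge--Tate weight appearing at embeddings with $s_{J,i}=-1$ is correctly accounted for in the filtration on $D_{\crys}$. The paper's forward references to Theorem~\ref{thm:gls-irregular}, Remark~\ref{rem:converse-remark-regular}, and Corollary~\ref{cor:gls-converse} indicate that the irregular GLS extension will be set up in the following subsection; granting those, the proof of Theorem~\ref{thm:irregular-locus} reduces to combining Theorem~\ref{prop:modified-basis-of-invariants} with the statement that the existence of a matrix form of the given shape on $M^0$ is equivalent to the existence of a crystalline lift with the prescribed irregular Hodge--Tate weights, together with the twisting step by $\tilde\Theta_J$.
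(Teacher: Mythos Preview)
Your plan for the inclusion $\cN^{\tau}(J)\subset\cZ^{\tau}(J)$ agrees with the paper: this is exactly Theorem~\ref{prop:modified-basis-of-invariants} combined with Theorem~\ref{thm:gls-irregular} (the ``forward'' GLS direction, which already covers irregular weights via \cite[Thm.~4.22]{gls12}). The difficulty you flag about accommodating a $v^{-1}$ entry is not actually an obstacle on this side.

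The genuine gap is in your treatment of the reverse inclusion $\cZ^{\tau}(J)\subset\cN^{\tau}(J)$. You propose to lift the matrices $B_i$ to $\tilde B_i\in\GL_2(\cO_E[\![v]\!])$ and then ``identify the corresponding $G_{K_\infty}$-representation as the restriction of a crystalline $G_K$-representation''. But this identification is precisely the content of the irregular converse to GLS, and you give no mechanism for it: a naive lift of an \'etale $\varphi$-module produces a $G_{K_\infty}$-representation, not a $G_K$-representation, and there is no reason the lift you wrote down extends to $G_K$, let alone crystallinely with the prescribed weights. In the regular case this converse is established in \cite[\S\S7--9]{gls12} by a substantial argument with $(\varphi,\hat G)$-modules; no such argument is available in the irregular case. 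Moreover, your suggestion to ``grant'' Corollary~\ref{cor:gls-converse} and feed it back into the proof is circular: in the paper that corollary is \emph{deduced from} Theorem~\ref{thm:irregular-locus}, not used to prove it.

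The paper sidesteps this entirely. Instead of proving $\cZ^{\tau}(J)\subset\cN^{\tau}(J)$ pointwise, it observes that $\cZ^{\tau}(J)$ is irreducible (being the scheme-theoretic image of the irreducible stack $\cC^{\tau}(J)$) and then matches dimensions: Proposition~\ref{prop:dim-Z-tau-J} computes $\dim\cZ^{\tau}(J)=[K:\Qp]-|S^{\tau}(J)|$, while \cite[Thm.~4.8.14]{EGmoduli} gives the same dimension for $\cN^{\tau}(J)$. An inclusion of reduced stacks of the same dimension into an irreducible one must be an equality. This dimension-and-irreducibility step is the idea you are missing; it is what allows the paper to obtain the irregular GLS converse as an output rather than requiring it as an input.
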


\begin{cor}
    \label{cor:Z-tau-J is combinatorial} The stack $\cZ^{\tau}(J)$ depends only on the Hodge type $r(\tau,J)$.
\end{cor}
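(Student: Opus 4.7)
The plan is to derive the corollary as an immediate consequence of Theorem~\ref{thm:irregular-locus} together with the well-definedness of the auxiliary stack $\cN^\tau(J)$. By the theorem, $\cZ^\tau(J) = \cN^\tau(J)$, and by construction $\cN^\tau(J) = \cZ^{\underline{r}}_{\red}$ for any choice of representative $\underline{r}$ of the equivalence class $r(\tau,J)$. So the content of the corollary reduces to the claim that $\cZ^{\underline{r}}_{\red}$ depends only on $\underline{r}$ modulo $\sim$.

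For this last point, I would appeal to the uniqueness statement in Corollary~\ref{cor:loci-in-Zdd}:\ the reduced closed substack $\cZ^{\underline{r}}_{\red}$ is pinned down by its finite type points, which are precisely the representations $\rhobar:G_K \to \GL_2(\F')$ admitting a crystalline lift of Hodge type $\underline{r}$. If $\underline{r}' = \underline{r} + \underline{\lambda}$ with $\underline{\lambda} \in \Lambda$, then by the very definition of $\Lambda$ there is a crystalline character $\psi$ of $G_K$ of Hodge type $\underline{\lambda}$ with trivial reduction mod $p$;\ twisting by $\psi$ gives a bijection between crystalline lifts of $\rhobar$ of Hodge type $\underline{r}$ and crystalline lifts of $\rhobar$ of Hodge type $\underline{r}'$. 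Thus $\cZ^{\underline{r}}_{\red}$ and $\cZ^{\underline{r}'}_{\red}$ have the same finite type points, and hence coincide. Combining these observations yields $\cZ^{\tau}(J) = \cZ^{\underline{r}}_{\red}$ for any representative $\underline{r}$ of $r(\tau,J)$, which is exactly the assertion of the corollary.

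Since Theorem~\ref{thm:irregular-locus} is taken as given, there is essentially no obstacle here;\ the argument is a one-paragraph unwinding of definitions combined with the observation that the equivalence relation $\sim$ was designed precisely to preserve the locus of crystalline lifts mod $p$.
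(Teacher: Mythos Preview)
Your argument is correct and matches the paper's approach: the corollary is stated without proof in the paper, being immediate from Theorem~\ref{thm:irregular-locus} together with the fact (already established when $\cN^\tau(J)$ was defined) that $\cN^\tau(J)=\cZ^{\underline{r}}_{\red}$ depends only on the equivalence class $r(\tau,J)$. Your unwinding of the well-definedness via twisting by a crystalline character of trivial reduction is exactly the content of the final sentence of Definition~\ref{def:equiv-on-Hodge-types} transported to the $\cZ$ side by the uniqueness clause of Corollary~\ref{cor:loci-in-Zdd}.
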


Theorem~\ref{thm:irregular-locus} was proved in \cite{cegsC} in the regular case (i.e., when $J \in \cP_\tau$), but is new in the irregular case.
We also note the following application of Theorem~\ref{thm:irregular-locus} to the Emerton--Gee stacks;\  again this result is new in the irregular case.

\begin{cor}\label{cor:irreducibility-of-EG-irregular}
Suppose the Hodge type $\underline{r}$ is $p$-bounded and non-Steinberg. Then $\cX^{\underline{r}}_{\text{red}}$ is irreducible. 
\end{cor}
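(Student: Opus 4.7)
The plan is to translate irreducibility of $\cX^{\underline{r}}_{\red}$ into irreducibility of a corresponding locus on the CEGS side, and then realize that locus as the scheme-theoretic image of a single irreducible component of $\cC^{\tau,\BT,1}$. Since $\underline{r}$ is $p$-bounded and non-Steinberg, Lemma~\ref{lem:p-bounded-ns} produces the reduced closed substack $\cZ^{\underline{r}}_{\red} = \cZ^{\mathrm{triv},\underline{r}}_{\red} \subset \cZ^{\dd,1}$ whose finite type points are exactly the Galois representations with a crystalline lift of Hodge type $\underline{r}$. Applying Corollary~\ref{cor:loci-in-Zdd} with the trivial inertial type puts the irreducible components of $\cZ^{\underline{r}}_{\red}$ in bijection with those of $\cX^{\underline{r}}_{\red}$, with corresponding components isomorphic via the comparison maps $h^{\tau_i}$ of Theorem~\ref{thm:EG-CEGS-isom}. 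In particular, it suffices to show that $\cZ^{\underline{r}}_{\red}$ is irreducible.

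Next I would apply Proposition~\ref{prop:existence-tame-type} to produce a non-scalar tame type $\tau$ and a profile $J \subset \Z/f'\Z$ with $\underline{r} \sim r(\tau,J)$, so that by the definition of $\cN^{\tau}(J)$ we have $\cZ^{\underline{r}}_{\red} = \cN^{\tau}(J)$. The main theorem Theorem~\ref{thm:irregular-locus} (whose content is precisely the identification of these loci in the possibly irregular case) then gives $\cN^{\tau}(J) = \cZ^{\tau}(J)$.

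Finally, by its very definition $\cZ^{\tau}(J)$ is the scheme-theoretic image in $\cR^{\dd}$ of the stack $\cC^{\tau}(J)$, which is irreducible as an irreducible component of $\cC^{\tau,\BT,1}$ by Theorem~\ref{thm:stacks-results}(4) (and explicitly by Theorem~\ref{thm:Ltau=Ctau}, which identifies it with the irreducible $\cL^{\tau}(J)$, itself covered by the irreducible $(L^+\GL_2)^{f}$). Since the scheme-theoretic image of an irreducible stack is irreducible, $\cZ^{\tau}(J)$ and hence $\cX^{\underline{r}}_{\red}$ is irreducible.

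There is no substantive obstacle remaining at this point: the argument is a chain of invocations. The hardest work has already been done upstream, namely the Emerton--Gee/CEGS comparison Theorem~\ref{thm:EG-CEGS-isom}, the combinatorial realization of arbitrary $p$-bounded non-Steinberg $\underline{r}$ as some $r(\tau,J)$ (Proposition~\ref{prop:existence-tame-type}), and the extension of the characterization of $\cZ^{\tau}(J)$ to the irregular regime (Theorem~\ref{thm:irregular-locus}); the present corollary is their clean synthesis.
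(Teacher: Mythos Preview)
Your proof is correct and follows essentially the same approach as the paper's own proof, which simply cites Proposition~\ref{prop:existence-tame-type}, Theorem~\ref{thm:irregular-locus}, the irreducibility of the stacks $\cZ^{\tau}(J)$, and the last part of Corollary~\ref{cor:loci-in-Zdd}. You have merely unpacked these citations in slightly more detail (e.g., invoking Lemma~\ref{lem:p-bounded-ns} to justify applying Corollary~\ref{cor:loci-in-Zdd}, and tracing the irreducibility of $\cZ^{\tau}(J)$ back to that of $\cC^{\tau}(J)$).
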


\begin{proof}
This follows by combining Proposition \ref{prop:existence-tame-type}, Theorem \ref{thm:irregular-locus},  the irreducibility of the stacks $\cZ^{\tau}(J)$, and the last part of Corollary~\ref{cor:loci-in-Zdd}.
\end{proof}

Note that Corollary~\ref{cor:irreducibility-of-EG-irregular} is false for  Hodge types that are Steinberg, because if $\underline{r}$ is Steinberg then $\cX^{\underline{r}}_{\text{red}}$ is the union of two irreducible components (\emph{cf}.~Theorem~\ref{thm:cegs-main-X}(2)).

The proof of Theorem~\ref{thm:irregular-locus} will occupy the remainder of this subsection. The strategy is to prove that $\cN^\tau(J)$ and $\cZ^\tau(J)$ are equidimensional of the same dimension, and that there is an inclusion $\cN^\tau(J) \subset \cZ^\tau(J)$. Since $\cZ^\tau(J)$ is irreducible by construction, the inclusion must be an equality. 

Recall that $S^\tau(J)$ is defined to be the set $\{i \in \Z/f\Z : s_{J,i} = -1\}$;\ or equivalently if $\underline{r} \sim r(\tau,J)$, then $S^{\tau}(J)$ is the set $\{ i \in \Z/f\Z : r_{i,1} = r_{i,2} \}$.  By the last part of Corollary~\ref{cor:loci-in-Zdd}, we already know that $\cN^{\tau}(J)$ is equidimensional of dimension $[K:\Q] - |S^\tau(J)|$. We begin by checking that the same is true of $\cZ^{\tau}(J)$.

\begin{prop}\label{prop:dim-Z-tau-J}
We have $\dim \cZ^{\tau}(J) = [K:\Qp] - |S^{\tau}(J)|$. 
\end{prop}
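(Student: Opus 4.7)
The approach is to apply a fiber-dimension formula to the proper dominant morphism $\pi \colon \cC^{\tau}(J) \to \cZ^{\tau}(J)$. Properness follows because $\pi$ factors as the closed immersion $\cC^{\tau}(J)\hookrightarrow\cC^{\tau,\BT,1}$ followed by the proper morphism $\cC^{\tau,\BT,1}\to\cR^{\dd,1}$ (whose scheme-theoretic image restricted to $\cC^{\tau}(J)$ is by definition $\cZ^{\tau}(J)$), and dominance is built into the definition of $\cZ^{\tau}(J)$ as a scheme-theoretic image. By Theorems~\ref{thm:stacks-results} and~\ref{thm:Ltau=Ctau}, $\cC^{\tau}(J)$ is irreducible of dimension $f=[K:\Qp]$, so $\cZ^{\tau}(J)$ is irreducible of dimension $f-d$, where $d$ is the dimension of the generic fiber of $\pi$. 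It therefore suffices to show that $d=|S^{\tau}(J)|$.

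If $S^{\tau}(J)=\emptyset$ (i.e.\ $J\in\cP_{\tau}$), then $\cZ^{\tau}(J)$ is an irreducible component of $\cZ^{\tau,1}$ by Theorem~\ref{lem: C 1 and Z 1 are the underlying reduced substacks}(4) and hence has dimension $f$ by Theorem~\ref{lem: C 1 and Z 1 are the underlying reduced substacks}(1), giving $d=0$ as required. So I would assume $S^{\tau}(J)\ne\emptyset$ and compute $d$ by combining the two matrix presentations: on the $\cC^{\tau}(J)$ side, Corollary~\ref{cor:partial-frob-C(J)} gives Breuil--Kisin partial Frobenius matrices of the form $B_i\bigl(\begin{smallmatrix}v&0\\0&1\end{smallmatrix}\bigr)$ (for $i\in J$) or $\bigl(\begin{smallmatrix}1&0\\0&v\end{smallmatrix}\bigr)B_i$ (for $i\notin J$); on the $\cZ^{\tau}(J)$ side, Theorem~\ref{prop:modified-basis-of-invariants} gives étale $\varphi$-module matrices of the form $B_i\bigl(\begin{smallmatrix}v&0\\0&v^{-s_{J,i}}\end{smallmatrix}\bigr)v^{-\theta_{J,i}}$.

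The crux is the following local-at-$i$ analysis of the generic fiber. At positions $i\notin S^{\tau}(J)$, where $s_{J,i}\ge 0$, the diagonal scaling in Theorem~\ref{prop:modified-basis-of-invariants} has two distinct powers of $v$ on the diagonal; for a generic $M\in\cZ^{\tau}(J)(\F')$, the admissible Breuil--Kisin lattice at $i$ compatible with the shape condition from Corollary~\ref{cor:partial-frob-C(J)} is then rigid (determined up to an automorphism of the lattice). By contrast, at positions $i\in S^{\tau}(J)$, where $s_{J,i}=-1$, the diagonal scaling degenerates to the scalar $v\cdot I$, whose commutant contains a full torus; this produces a one-parameter family of admissible lattices at~$i$ via diagonal rescaling. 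Summing the $|S^{\tau}(J)|$ independent contributions gives $d=|S^{\tau}(J)|$.

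The main obstacle is to turn this heuristic into a rigorous fiber-dimension calculation. My plan would be to fix a suitably generic $M\in\cZ^{\tau}(J)(\F')$, describe its fiber in $\cC^{\tau}(J)$ as a quotient of an explicit finite-type affine scheme parameterizing the admissible lattices (using the $B_i$'s as coordinates and imposing the shape constraints at each~$i$) by the group of automorphisms of $M$ preserving the type decomposition; then a direct matrix-factorization computation, treating positions $i\in S^{\tau}(J)$ and $i\notin S^{\tau}(J)$ separately as above and exploiting \eqref{eq:cofb} to understand the ambiguities in reconstructing an eigenbasis, should confirm that the resulting quotient has dimension exactly $|S^{\tau}(J)|$.
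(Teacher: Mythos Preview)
Your high-level strategy---computing $\dim\cZ^{\tau}(J)$ as $\dim\cC^{\tau}(J)$ minus the generic fiber dimension of $\pi$---is sound and is close in spirit to the paper's proof, which also applies \cite[\href{https://stacks.math.columbia.edu/tag/0DS4}{Tag 0DS4}]{stacks-project} to a dominant morphism onto $\cZ^{\tau}(J)$. However, the paper does not use $\cC^{\tau}(J)$ as the source; instead it uses the explicit presentation $\xi\colon\Spec \Bkfree\to\cZ^{\tau}(J)$ coming from the family of extensions of the rank-one Breuil--Kisin modules $\gM(J),\gN(J)$ constructed in \cite{cegsC}, and identifies the fiber over a generic (non-split) point with $\Spec\F'[\kExt^{1}_{\cK(\F')}(\gM(J)_{\F',\bar x},\gN(J)_{\F',\bar y})^{\vee}]\times\Gm\times\Gm$, of dimension $|S^{\tau}(J)|+2$ by \cite[Prop.~5.1.8]{cegsC}. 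The case $|S^{\tau}(J)|=f$ is handled separately.

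The gap in your proposal is precisely the fiber computation, which you yourself flag. Your ``local at $i$'' heuristic is not rigorous, for two reasons. First, Theorem~\ref{prop:modified-basis-of-invariants} describes $M^0$ (the $\Gal(K'/K)$-invariants), whereas Corollary~\ref{cor:partial-frob-C(J)} describes $\gM$ in an eigenbasis; the change of basis between them in the proof of Theorem~\ref{prop:modified-basis-of-invariants} involves non-integral powers of $u$ and a $J$-dependent twist, so the observation that ``the diagonal scaling degenerates to the scalar $v\cdot I$'' in the $M^0$-picture does not translate transparently into a count of shaped lattices $\gM\subset M$. Second, and more seriously, the partial Frobenius $\Phi_{\gM,i}$ links positions $i-1$ and $i$, so a rescaling at one index alters the matrices at two adjacent indices via \eqref{eq:cofb}; the contributions from different $i\in S^{\tau}(J)$ are therefore not obviously independent, and summing them to $|S^{\tau}(J)|$ requires a genuine argument. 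The paper's route sidesteps this because the freedom in choosing a Breuil--Kisin extension of $\gM(J)_{\F',\bar x}$ by $\gN(J)_{\F',\bar y}$ with fixed image after inverting $u$ is by definition the space $\kExt^{1}$, whose dimension has already been computed in \cite[Prop.~5.1.8]{cegsC}. Pushing your approach through $\cC^{\tau}(J)$ would essentially require reproving that result.
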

\begin{proof} 
The proof is an elaboration of the proof of \cite[Thm.~5.1.12]{cegsC}. The construction of the stack $\cZ^\tau(J)$ in \cite[Def.~4.2.12]{cegsC}  furnishes a map 
\[\xi : \Spec \Bkfree \to \cZ^{\tau}(J)\]
where $\xi$ is scheme-theoretically dominant and the  source has dimension $[K:\mathbb{Q}_p]+2$. 

We freely use the notation of \cite[\S3.3]{cegsC}. Assume first that $|S^{\tau}(J)| < f$, and let $X$ be the dense open subscheme of $\Spec B^{\kfree}$ defined in the paragraph preceding \cite[Rmk.~3.3.8]{cegsC} (and also denoted $X$ there). By \cite[\href{https://stacks.math.columbia.edu/tag/0DS4}{Tag 0DS4}]{stacks-project}, it suffices to show that 
the restriction of $\xi$ to $X$ has fibers of dimension $|S^{\tau}(J)|+2$ in the sense of \cite[\href{https://stacks.math.columbia.edu/tag/0DRL}{Tag 0DRL}]{stacks-project}.

Given an $A^{\kfree}$-algebra $A$, an $A$-point of $X$ is an extension class 
\[[\gE] \in \Ext^{1}_{\cK(A)}(\gM(J)_{A, \overline{x}}, \gN(J)_{A, \overline{y}})\] which does not become the trivial class on inverting $u$ after any base change,
where $\gM(J), \gN(J)$ are as in \cite[Def.~4.2.8]{cegsC}, $\gM(J)_{A,\overline{x}}, \gN(J)_{A,\overline{y}}$ are ``unramified twists'' as in \cite[Def.~3.3.2]{cegsC}, and $\overline{x},\overline{y}$ denote the images of $x,y \in A^{\kfree}$ in $A$.


Setting $U_{\Akfree} := \kExt^{1}_{\cK(\Akfree)}(\gM(J)_{\Akfree, x}, \gN(J)_{\Akfree, y})$ as in the discussion before \cite[Rmk.~3.3.8]{cegsC}, $Y:=\Spec \Akfree[U_{\Akfree}^{\vee}]$ is a closed subscheme of $\Spec \Bkfree$ whose $A$-points are extensions
$[\gK] \in \Ext^{1}_{\cK(A)}(\gM(J)_{A, \overline{x}}, \gN(J)_{A, \overline{y}})$  that do become trivial upon inverting $u$. There is a map
\begin{equation}\label{eq:ker-ext-not-all}
(X \times_{\Spec \Akfree} Y) \times_{\F} \Gm \times_{\F} \Gm \to X \times_{\Spec \Akfree \times_{\F} \cZ^{\tau}(J)} X \to  X \times_{\cZ^{\tau}(J)} X 
\end{equation}
given by mapping an $A$-point $([\gE], [\gK], r, s)$ in the domain to 
\begin{itemize}
\item the extension $[\gE]$ in the first coordinate,
\item the extension $[\gE'] := r \cdot [\gE] + [\gK]$ in the second coordinate, along with 
\item the data of an isomorphism $\gE[1/u] \cong \gE'[1/u]$
which on the quotients $\gM(J)_{A,\overline{x}}[1/u]$ is induced from multiplication by $s$ on  $\gM(J)$.
\end{itemize}
 Note that the automorphisms of $\gE[1/u]$ are a torsor for $\Gm$:\ this follows from \cite[Lem.~3.3]{cegsC} and the fact that $\gE[1/u]$ is non-split after any base change.
 
It is immediately verified that the map \eqref{eq:ker-ext-not-all} is a monomorphism and a bijection on finite type points, observing that
\[ X \times_{\Spec \Akfree \times_{\F} \cZ^{\tau}(J)} X \to  X \times_{\cZ^{\tau}(J)} X \]
is bijective on finite type points by \cite[Lem.~3.3.5]{cegsC} and the fact that points of $X$ remain nonsplit after inverting $u$.

Therefore, if $\F'$ is a finite extension of $\F$, the fiber of $\xi$ over an $\F'$-point of $X$ admits a surjective monomorphism from
\[\Spec\F'[\kExt^{1}_{\cK(\F')}(\gM(J)_{\F', \overline{x}}, \gN(J)_{\F', \overline{y}})^{\vee}] \times_{\F} \Gm \times_{\F} \Gm\] which has dimension $|S^{\tau}(J)| + 2$ by a comparison between \cite[Prop.~5.1.8]{cegsC} and \eqref{def:s_J}. (The restriction to $B^\kfree$ avoids the exceptional case of \cite[Prop.~5.1.8]{cegsC}.) Thus, the dimension of the fiber is also $|S^{\tau}(J)| + 2$ by an application of \cite[\href{https://stacks.math.columbia.edu/tag/0DS4}{Tag 0DS4}]{stacks-project}.
 
Now, suppose $|S^{\tau}(J)| = f$, or equivalently, 
\[\kExt^{1}_{\cK(A)}(\gM(J)_{A,\overline{x}}, \gN(J)_{A,\overline{y}}) = \Ext^{1}_{\cK(A)}(\gM(J)_{A,\overline{x}}, \gN(J)_{A,\overline{y}})\] for all 
$\Akfree$-algebras $A$. In this case, the map~$\xi$ factors as
\[\xi: \Spec \Bkfree \to \Spec \Akfree \xrightarrow{g} \cZ^{\tau}(J)\] where the first arrow is the structure map and $g$ maps the universal point of $\Spec \Akfree$  to $\gM(J)_{\Akfree, x}[1/u] \oplus \gN(J)_{\Akfree, y}[1/u]$. 

Let $p_0 \in \Spec \Akfree(\F')$ be fixed. The fiber of $g$ over $g(p_0)$ is given by $\Spec \F' \times_{\cZ^{\tau}(J)} \Spec \Akfree$. For a finite type $\F'$-algebra $A$, an $A$-point of this fiber is a $A^\kfree$-algebra structure $p : A^\kfree \to A$ together with an isomorphism 
\[\gM(J)_{A,p(x)}[1/u] \oplus \gN(J)_{A,p(y)}[1/u] \xrightarrow{\sim} \gM(J)_{A, p_0(x)}[1/u] \oplus \gN(J)_{A, p_0(y)}[1/u].\] 
It follows that the fiber of $g$ over $g(p_0)$ admits a monomorphism, bijective on finite type points, 
either from $\Gm \times_\F \Gm$ or else from the disjoint union of two copies of $\Gm \times_\F \Gm$. One copy comes from $A$-points with $A^\kfree$ algebra structure $p : A^{\kfree} \longnto{p_0} \F' \to A$, so that $p(x) = p_0(x)$ and $p(y) = p_0(y)$;\ the other copy, if it exists, comes from the unique $A^{\kfree}$-algebra structure $p$ such that
 \begin{align*}
 \gM(J)_{A, p(x)}[1/u] & \cong \gN(J)_{A, p_0(y)}[1/u]\\
\gN(J)_{A, p(y)}[1/u] & \cong \gM(J)_{A, p_0(x)}[1/u],
 \end{align*} 
again if it exists. Note that because we are on $A^\kfree$, there is no isomorphism $ \gM(J)_{A, p_0(x)}[1/u] \cong \gN(J)_{A, p_0(y)}[1/u]$, so there is no $\GL_2$ in the fiber.





By density of finite type points, the topological spaces associated to the scheme-theoretic images of the one or two copies of $\Gm \times_{\F} \Gm$ are precisely the irreducible components of $|\cZ^{\tau}(J)|$, and by \cite[\href{https://stacks.math.columbia.edu/tag/0DS4}{Tag 0DS4}]{stacks-project}, each scheme-theoretic image has dimension $2$. Thus, using \cite[\href{https://stacks.math.columbia.edu/tag/0DRZ}{Tag 0DRZ}]{stacks-project}, the dimension of the fiber is $2$. Another application of \cite[\href{https://stacks.math.columbia.edu/tag/0DS4}{Tag 0DS4}]{stacks-project} then shows that the dimension of $\cZ^{\tau}(J)$ is $0$.
\end{proof}

We recall the following, which (more or less) is  one of the main results of  \cite{gls12}.

\begin{thm}\label{thm:gls-irregular}
Let $\F'/\F$ be a finite extension. Suppose that   $\rhobar : G_K \to \GL_2(\F')$ is a Galois representation, and that $M^0 \in \cR_2(\F')$ is an \'etale $\varphi$-module  for $K$ (without descent data) such that $\rhobar |_{G_{K_{\infty}}} \cong T_K(M^0)$.
If $\rhobar$ has a crystalline lift with $p$-bounded Hodge type $\underline{r}$, then 
$(M^0)_i$ has a basis $x_i,y_i$ for each $i \in \Z/f\Z$ such that the partial Frobenius maps $\Phi_{M^0,i}$, written with respect to the basis $(x_i,y_i)_{i \in \Z/f\Z}$,
have matrices
\[
    B_{i} \begin{pmatrix}
    v^{r_{i,1}}  & 0 \\
    0 & v^{r_{i,2}}
    \end{pmatrix}
\]
for some $B_i \in \GL_2(\F'[\![v]\!])$.
\end{thm}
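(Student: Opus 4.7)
The plan is to reduce to the regular case, which is essentially \cite{gls12}. When $\underline{r}$ is regular, the existence of the desired basis follows from the main technical result of \textit{loc.~cit.}, which constructs such a basis from a Breuil--Kisin lattice for $\rho$ using the Hodge filtration on $D_{\crys}(\rho)$.

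For the general $p$-bounded case, I would work directly with a Breuil--Kisin module $\mathfrak{M}$ for $\rho|_{G_{K_\infty}}$ over $\cO'[\![u]\!]$, where $\cO'$ is the ring of integers of some finite extension of $E$ over which $\rho$ is defined. At each embedding $i$, the determinant of the partial Frobenius $\Phi_{\mathfrak{M},i}$ generates $(E(u)^{r_{i,1}+r_{i,2}})$ and its cokernel is killed by $E(u)^{r_{i,1}}$. Choosing a basis of $\mathfrak{M}$ adapted to the Hodge filtration as in \cite{gls12}, I would arrange for the matrix of $\Phi_{\mathfrak{M},i}$ to be upper-triangular with diagonal entries $E(u)^{r_{i,1}}$ and $E(u)^{r_{i,2}}$ times units; the cokernel condition then forces the off-diagonal entry to be divisible by $E(u)^{r_{i,2}}$. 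Factoring $\mathrm{diag}(E(u)^{r_{i,1}}, E(u)^{r_{i,2}})$ out on the right and reducing modulo $\varpi$ (using that $E(u) \equiv v \pmod{p}$ in the unramified setting) then yields the claimed form $B_i \cdot \mathrm{diag}(v^{r_{i,1}}, v^{r_{i,2}})$ with $B_i \in \GL_2(\F'[\![v]\!])$.

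The main obstacle is constructing a basis of $\mathfrak{M}$ that simultaneously puts each partial Frobenius into the desired upper-triangular form. At an irregular embedding $i$, where $r_{i,1} = r_{i,2}$, the height and determinant conditions force the Smith normal form of $\Phi_{\mathfrak{M},i}$ to be $E(u)^{r_{i,1}} \cdot I$, so the matrix is $E(u)^{r_{i,1}}$ times an element of $\GL_2(\cO'[\![u]\!])$ in any basis and the required form is automatic at such $i$. The challenge is to carry through the construction of \cite{gls12} at the regular embeddings in a way that remains compatible with this flexibility at the irregular embeddings; I would expect this to follow by a direct extension of the case-by-case shape analysis in \cite{gls12}, since irregular embeddings impose strictly weaker constraints on the basis choice than regular ones do.
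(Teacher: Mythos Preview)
Your proposal rests on a misconception: you assume \cite[Thm.~4.22]{gls12} is restricted to the regular case and that the irregular case requires a separate extension. In fact that theorem already applies to arbitrary $p$-bounded Hodge types (the hypothesis there is only that the labelled weights lie in $[0,p]$), so no new argument is needed. The paper's proof is accordingly much shorter than yours: twist so that all $r_{i,j}\in[-p,0]$, translate conventions (the paper's $T_K$ corresponds to the contravariant functor of \cite{gls12} applied to the dual module, and the two sources use opposite sign conventions for Hodge--Tate weights), apply \cite[Thm.~4.22]{gls12} to $(M^0)^\vee$, and then take inverse transposes of the resulting Frobenius matrices to pass back to $M^0$.

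Your sketched direct approach is not wrong in spirit---you are essentially outlining what \cite{gls12} does---but it is incomplete at exactly the point you flag as ``the main obstacle''. The basis $(x_i,y_i)$ at embedding $i-1$ appears in the matrix of $\Phi_{M^0,i}$, so the choices at different embeddings are genuinely coupled, and ``upper-triangular at each $i$ separately'' is not enough to conclude. Resolving this is the substance of the shape analysis in \cite{gls12}, which you would be reproducing rather than avoiding. Your observation that at an irregular embedding the matrix is automatically $E(u)^{r_{i,1}}$ times a unit (so imposes no constraint on the basis) is correct and useful, but it does not by itself let you bypass the global argument.
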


\begin{remark}\label{rem:again-order-irrelevant}
As in Remark~\ref{rem:ordering-irrelevant}, the theorem remains true for any reordering of the diagonal elements in the matrices $\left(\begin{smallmatrix}
    v^{r_{i,1}} & 0 \\ 0 & v^{r_{i,2}}
\end{smallmatrix}\right)$.
\end{remark}
\begin{proof}
Twisting by a character we reduce to the case where each $r_{i,j} \in [-p,0]$. This case will follow from \cite[Thm.~4.22]{gls12} after translating between the conventions of \cite{gls12} and \cite{cegsC} for Galois representations and Hodge--Tate weights. More precisely:\  from Lemma~\ref{lem:gal-rep-comparison} recall that $T_K(M^0) \cong T_K^*((M^0)^\vee)$. On the other hand, having crystalline lifts with Hodge type $\underline{r}$ in the conventions of \cite{cegsC} (i.e., in our conventions) is equivalent to having crystalline lifts with Hodge type  $-\underline{r} := \{-r_{1,i}, -r_{2,i}\}_{i \in \Z/f\Z}$ in the conventions of \cite{gls12}. Thus $T_K(M^0)$ has a crystalline lift with Hodge type $\underline{r}$ in our sense if and only if, in the sense of \cite{gls12}, $T_K^*((M^0)^\vee)$ has a crystalline lift with Hodge type $-\underline{r}$. Then \cite[Thm.~4.22]{gls12} tells us that $(M^0)^\vee$ admits a basis for which the partial Frobenius maps have matrices
\begin{align*}
B_{i} \begin{pmatrix}
v^{-r_{1, i}} & 0 \\
0 & v^{-r_{2,i}}
\end{pmatrix}
\end{align*}
for some $B_i \in \GL_2(R[\![v]\!])$. Finally, by Remark~\ref{rem:frob-dual} the partial Frobenius matrices for $M^0$ are the inverse transpose of those for $M^0$.
\end{proof}

Theorem \ref{thm:irregular-locus} now follows easily from all the work we have already done.

\begin{proof}[Proof of Theorem \ref{thm:irregular-locus}]


Theorem~\ref{prop:modified-basis-of-invariants} and Theorem~\ref{thm:gls-irregular} show that $\cN^{\tau}(J) \subset \cZ^{\tau}(J)$. By \cite[Thm.~4.8.14]{EGmoduli} and Proposition \ref{prop:dim-Z-tau-J}, $\cN^{\tau}(J)$ and $\cZ^{\tau}(J)$ have the same dimension. Since $\cZ^{\tau}(J)$ is irreducible, being the scheme-theoretic image of the irreducible component $\cC^{\tau}(J)$ of $\cC^{\tau,\BT,1}$, the theorem follows.
\end{proof}

\begin{remark}\label{rem:converse-remark-regular}
    If the Hodge type $\underline{r}$ is $p$-bounded and \emph{regular}, then under the additional hypothesis that $\rhobar$ is not a twist of a tr\'es ramifi\'ee representation, the results in \cite[\S\S7--9]{gls12} can be reinterpreted as providing a converse to Theorem~\ref{thm:gls-irregular};\ that is, if $\rhobar$ is not a twist of a tr\'es ramifi\'ee representation, then the if-then of Theorem~\ref{thm:gls-irregular} is in fact an if-and-only-if. (The extra hypothesis on $\rhobar$ is necessary because if $\rhobar$ is tr\'es ramifi\'ee then $\rhobar|_{G_{K{\infty}}}$ can be split, and in that case $\rhobar|_{G_{K{\infty}}} \cong T_K(M^0)$ for $M^0$ as in Theorem~\ref{thm:gls-irregular}  with $\underline{r} = \BT$, although $\rhobar$ has no Barsotti--Tate lift.)
    
    For regular $\underline{r}$, Theorem~\ref{prop:modified-basis-of-invariants} and Theorem~\ref{thm:gls-irregular} therefore prove directly that the $\F'$-points of $\cZ^{\tau}(J)$ are precisely those admitting a crystalline lift of Hodge type $r(\tau,J)$, furnishing a new, purely local proof of a result from \cite{cegsA}.
\end{remark}

 In the irregular case, we can instead \emph{use} Theorem~\ref{thm:irregular-locus} to deduce the converse to Theorem~\ref{thm:gls-irregular}.
 
\begin{cor}\label{cor:gls-converse} Suppose $\rhobar$ is not a twist of a tr\'es ramifi\'ee representation. 
The ``if-then'' of Theorem~\ref{thm:gls-irregular} is an ``if-and-only-if'' when the Hodge type $\underline{r}$ is $p$-bounded. 
\end{cor}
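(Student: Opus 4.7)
The plan is to reduce the irregular case to Theorems~\ref{prop:modified-basis-of-invariants} and~\ref{thm:irregular-locus}; the regular case is already covered by Remark~\ref{rem:converse-remark-regular} via the results of \cite{gls12}. Since any irregular Hodge type is automatically non-Steinberg (the Steinberg condition requires all weight differences to equal $p$), both Lemma~\ref{lem:p-bounded-ns} and Proposition~\ref{prop:existence-tame-type} are available when $\underline{r}$ is $p$-bounded and irregular.

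First I would apply Proposition~\ref{prop:existence-tame-type} to produce a non-scalar tame type $\tau$ and a profile $J \subset \Z/f'\Z$ with $r(\tau,J) \sim \underline{r}$. Since $\Theta_J$ is only well-defined up to an element of $\Lambda$ (Definition~\ref{def:equiv-on-Hodge-types}), the integers $\theta_{J,i}$ can be shifted by any element of $\Lambda$, and I can choose this shift so that $r(\tau,J) = \underline{r}$ on the nose, i.e., so that $r_{i,1} = 1 - \theta_{J,i}$ and $r_{i,2} = -s_{J,i} - \theta_{J,i}$ for each $i$. At embeddings $i$ where $s_{J,i} = -1$, the two Hodge--Tate weights $r_{i,1}$ and $r_{i,2}$ coincide, so the labeling is ambiguous; this flexibility is compatible with the statements of both theorems thanks to Remarks~\ref{rem:ordering-irrelevant} and~\ref{rem:again-order-irrelevant}.

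Next, I would factor the hypothesized partial Frobenius matrices as
\[
B_i \begin{pmatrix} v^{r_{i,1}} & 0 \\ 0 & v^{r_{i,2}} \end{pmatrix} \;=\; B_i \begin{pmatrix} v & 0 \\ 0 & v^{-s_{J,i}} \end{pmatrix} v^{-\theta_{J,i}},
\]
which is precisely the form required by Theorem~\ref{prop:modified-basis-of-invariants}; therefore $M$ defines a finite type point of $\cZ^{\tau}(J)$. Theorem~\ref{thm:irregular-locus} now identifies $\cZ^{\tau}(J)$ with $\cN^{\tau}(J) = \cZ^{\underline{r}}_{\red}$, and by Lemma~\ref{lem:p-bounded-ns} every finite type point of $\cZ^{\underline{r}}_{\red}$ admits a crystalline lift of Hodge type $\underline{r}$. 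This produces the desired crystalline lift of $\rhobar$.

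The main obstacle in this plan is really one of careful bookkeeping rather than substantive mathematics: aligning the two different basis descriptions (Theorems~\ref{thm:gls-irregular} and~\ref{prop:modified-basis-of-invariants}) so that their diagonal exponents match on the nose. Once the combinatorial flexibility coming from the $\Lambda$-ambiguity in $\theta_{J,i}$ and the freedom to reorder basis vectors at embeddings with $r_{i,1} = r_{i,2}$ has been pinned down, the rest is formal. Note also that the tr\`es ramifi\'ee hypothesis is really only needed in the regular case; in the irregular case, the conclusion already forces $\rhobar$ to be non-tr\`es-ramifi\'ee-up-to-twist by the proof of Lemma~\ref{lem:p-bounded-ns}.
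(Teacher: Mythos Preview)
Your argument is correct and is essentially the paper's proof, only with the matrix factorization and the $\Lambda$-adjustment of the $\theta_{J,i}$ spelled out explicitly. The paper compresses these steps into the single sentence invoking Theorem~\ref{prop:modified-basis-of-invariants} and the equality $\cN^{\tau}(J) = \cZ^{\tau}(J)$.
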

\begin{proof} 
As explained in Remark~\ref{rem:converse-remark-regular} it remains to prove the Corollary when $\underline{r}$ is irregular. Suppose more generally that $\underline{r}$ is non-Steinberg.  By Proposition \ref{prop:existence-tame-type}, 
 can find non-scalar $\tau$ and a profile $J$ such that $\cN^{\tau}(J)$ is the locus of mod $p$ representations with crystalline lifts of Hodge type $\underline{r}$. The converse statement follows from the equality $\cN^{\tau}(J) = \cZ^{\tau}(J)$ together with Theorem~\ref{prop:modified-basis-of-invariants}.
\end{proof}

\section{Inclusions between \texorpdfstring{$p$}{p}-bounded crystalline loci}\label{sec:geography}


Suppose that the $p$-bounded Hodge type $\underline{r}$ is irregular. Since the locus $\cX^{\underline{r}}_{\red} \subset \cX_{2,\red}$ has positive codimension, it is reasonable to imagine that there exist inclusions \[ \cX^{\underline{r}}_{\red} \subset \cX^{\underline{r}'}_{\red}\] for certain other $p$-bounded Hodge types $\underline{r}'$. For example when $f=1$, so that $K=\Qp$, the locus $\cX^{\underline{0}}_{\red}$ of unramified representations is contained in the irreducible component of $\cX_{2,\red}$ associated to the Serre weight $\Sym^{p-2} \Fpbar^2$. Up to twist, this is the only proper inclusion of $p$-bounded crystalline loci when $K = \Qp$.

For the remainder of this section, we assume that $f \ge 2$, and we will prove in some additional situations  that representations having a crystalline lift with Hodge type $\underline{r}$ necessarily also have a crystalline lift with Hodge type $\underline{r}'$, and deducing as a corollary that $\cX^{\underline{r}}_{\red} \subset \cX^{\underline{r}'}_{\red}$.

In fact we  give two arguments. The first argument is short and direct, using Corollary~\ref{cor:gls-converse} and an explicit change of basis for \'etale $\varphi$-modules. The second argument, which we call ``shape-shifting'', is more geometric and (in our opinion) carries some explanatory power, but is also more complicated. The shape-shifting argument  relies on the observation that for the Hodge types $\underline{r}$ and $\underline{r}'$  under consideration, there exists a tame type $\tau$ and profiles $J,J'$ with $\underline{r} = r(\tau,J)$ and $\underline{r}' = r(\tau,J')$.

\subsection{The direct argument}

We begin by defining several operators on Hodge types. The first two can be viewed as analogues of partial theta operators and partial Hasse invariants in the work of Diamond and Sasaki on geometric Serre weight conjectures (\cite{DiamondSasaki}). 

\begin{defn}\label{def:operators}
    Let $f\geq 2$. For each $j \in \Z/f\Z$, we define operators $\theta_j$, $\mu_j$, and $\nu_j$ on Hodge types $\underline{r}$
   by setting:
\begin{align*}
         \theta_{j}(\underline{r})_{{i}} &:= \left\{ \begin{array}{ll}
        (r_{1, {i}},r_{2, {i}}-1) & \text{if } {i}= {j}-1, \\
         (r_{1, {i}}+p,r_{2, {i}}) & \text{if } {i}= {j}, \\
         (r_{1, {i}},r_{2, {i}}) & \text{otherwise. }  
         \end{array} \right.
         \end{align*}
         \begin{align*}
    \mu_{j}(\underline{r})_{{i}} &:= \left\{ \begin{array}{ll}
        (r_{1, {i}}-1,r_{2, {i}}) & \text{if } {i}= {j}-1, \\
         (r_{1, {i}}+p,r_{2, {i}}) & \text{if } {i}= {j}, \\
         (r_{1, {i}},r_{2, {i}}) & \text{otherwise. }  
         \end{array} \right.  \end{align*}
         \begin{align*}
        \nu_{j}(\underline{r})_{{i}} &:= \left\{ \begin{array}{ll}
        (r_{1, {i}},r_{2, {i}}-1) & \text{if } {i}= {j}, \\
         (r_{2, {i}} + p,r_{1, {i}}) & \text{if } {i}= {j}+1, \\
         (r_{1, {i}},r_{2, {i}}) & \text{otherwise. }  
         \end{array} \right.
\end{align*}
\end{defn}

We now establish the following.

\begin{thm}\label{thm:operator-lifts-exist}
    Suppose that $\underline{r}$ is $p$-bounded and irregular, with $r_{1,j} = r_{2,j}$. If $\rhobar$ has a crystalline lift of Hodge type $\underline{r}$, then $\rhobar$ also has crystalline lifts of Hodge type $\mu_j(\underline{r})$, and $\nu_j(\underline{r})$, as well as of Hodge type $\theta_j(\underline{r})$ provided that $r_{1,j-1} - r_{2,j-1} \neq p$. 
\end{thm}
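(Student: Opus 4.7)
The plan is to reduce the theorem to the construction of a certain change of basis on the étale $\varphi$-module associated to $\rhobar|_{G_{K_\infty}}$. Since $\underline{r}$ is $p$-bounded and non-Steinberg (being irregular), Lemma~\ref{lem:p-bounded-ns} implies that $\rhobar$ is not a twist of a tr\`es ramifi\'ee representation, so Corollary~\ref{cor:gls-converse} gives that $\rhobar$ admits a crystalline lift of $p$-bounded Hodge type $\underline{r}'$ if and only if the étale $\varphi$-module $M^0$ attached to $\rhobar|_{G_{K_\infty}}$ admits a basis $(x_i,y_i)_{i \in \Z/f\Z}$ with partial Frobenius matrices of the form
\[
B_i \operatorname{diag}(v^{r'_{i,1}}, v^{r'_{i,2}}), \qquad B_i \in \GL_2(\F'[\![v]\!]).
\]
It will therefore suffice, in each of the three cases, to exhibit an explicit change of basis carrying a basis of $M^0$ adapted to $\underline{r}$ into one adapted to $\underline{r}'$. (One checks directly from the definitions of $\mu_j, \nu_j, \theta_j$ and from the hypothesis $r_{1,j-1}-r_{2,j-1}\ne p$ in the $\theta_j$ case that the new Hodge type is again $p$-bounded.) The stack-level inclusion $\cX^{\underline{r}}_{\red} \subset \cX^{\underline{r}'}_{\red}$ then follows from the characterization of each side by its finite-type points.

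The key observation is that since $r_{1,j} = r_{2,j}$, the diagonal matrix at position $j$ equals the scalar $v^{r_{1,j}} I$, which commutes freely with any $\F'$-valued change of basis. For $\nu_j$ the plan is to change basis at position $j$ via $U_j = \operatorname{diag}(1,v)$, so that the diagonal at $j$ becomes $\operatorname{diag}(v^{r_{1,j}},v^{r_{1,j}-1})$ while the pullback $\varphi(U_j) = \operatorname{diag}(1,v^p)$ shifts the second weight at $j+1$ up by $p$, matching $\nu_j(\underline{r})$. For $\mu_j$ one takes $U_{j-1} = \operatorname{diag}(v,1)$, and for $\theta_j$ one takes $U_{j-1} = \left(\begin{smallmatrix} 0 & 1 \\ v & 0 \end{smallmatrix}\right)$. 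A direct computation shows that in each case the new partial Frobenius matrices take the form required by $\underline{r}'$ (invoking Remark~\ref{rem:again-order-irrelevant} to handle the swap of diagonal entries produced in the $\theta_j$ case), provided that a single off-diagonal entry is divisible by $v$: namely $v \mid B_{j,21}$ for $\nu_j$, $v \mid B_{j-1,12}$ for $\mu_j$, and $v \mid B_{j-1,21}$ for $\theta_j$.

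The last step is to arrange these divisibility conditions by a preliminary normalization of the basis. Since $\bar{B}_i \in \GL_2(\F')$ is invertible, when the relevant diagonal entry of $\bar{B}_i$ is nonzero one carries out a unipotent row or column operation with $\F'$-valued coefficients, implemented as a change of basis at position $j-1$ via a constant element of $\GL_2(\F'[\![v]\!])$; the crucial point is that the effect on the adjacent matrix $B_j$ stays in $\GL_2(\F'[\![v]\!])$ precisely because the diagonal at $j$ is scalar, so that the usual conjugation by a non-scalar diagonal (which would introduce negative powers of $v$, via $\operatorname{diag}(v^a,v^b) \cdot \left(\begin{smallmatrix} 1 & 0 \\ c & 1 \end{smallmatrix}\right) \cdot \operatorname{diag}(v^{-a},v^{-b}) = \left(\begin{smallmatrix} 1 & 0 \\ c v^{b-a} & 1\end{smallmatrix}\right)$) is avoided. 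In the exceptional case when that diagonal entry of $\bar{B}_i$ vanishes, one instead swaps basis elements at position $j-1$ via Remark~\ref{rem:again-order-irrelevant}; since this swap affects only the (scalar) diagonal at $j$, the Hodge type is preserved.

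The main obstacle will be bookkeeping the knock-on effects of each base change on neighbouring partial Frobenius matrices via $\varphi$-pullback. The $\theta_j$ case is the most delicate, because its anti-diagonal change of basis introduces a swap of the diagonal entries at $j$ that must be reconciled via Remark~\ref{rem:again-order-irrelevant}, and the hypothesis $r_{1,j-1} - r_{2,j-1} \neq p$ enters precisely to ensure that $\theta_j(\underline{r})$ remains $p$-bounded. A more geometric alternative proof could proceed via ``shape-shifting'': using Theorem~\ref{thm:irregular-locus} to identify $\cX^{\underline{r}}_{\red}$ with $\cZ^\tau(J)$ for a suitable tame type $\tau$ and profile $J$, and then producing, for each Breuil--Kisin module on $\cC^\tau(J)$, a companion module on $\cC^\tau(J')$ (with $J'$ chosen so that $r(\tau,J') = \underline{r}'$) having the same underlying étale $\varphi$-module, using the description of components via shapes from Section~\ref{sec:shape}.
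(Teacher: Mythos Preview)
Your argument is correct and follows the same overall strategy as the paper's ``direct argument'': reduce via Lemma~\ref{lem:p-bounded-ns} and Corollary~\ref{cor:gls-converse} to a change-of-basis problem on $M^0$, exploiting the fact that the diagonal matrix at position $j$ is the scalar $v^{r_{j,1}}I$. The paper's execution is a bit slicker: rather than a preliminary constant normalization to force a divisibility condition (with its attendant case split when a diagonal entry of $\bar B_i$ vanishes), it absorbs $B_{j-1}$ (for $\mu_j,\theta_j$) or $B_j^{-1}$ (for $\nu_j$) directly into the change of basis, so that the relevant commutation is automatic --- e.g.\ for $\mu_j$ one takes $\beta'_{j-1}=\beta_{j-1}B_{j-1}\operatorname{diag}(v,1)$, making the new $B_{j-1}$ equal to the identity. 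Your closing remark on the shape-shifting alternative also correctly anticipates the paper's second proof.
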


Note that $\mu_{j}(\underline{r})$ and $\nu_j(\underline{r})$ in the statement of the theorem are still $p$-bounded, and the hypothesis $r_{1,j-1} - r_{2,j-1} \neq p$ guarantees the same for $\theta_j(\underline{r})$.

\begin{proof}
    The proof of Lemma~\ref{lem:p-bounded-ns} shows that $\rhobar$ is not a twist of a tr\'es ramifi\'ee representation. By Corollary~\ref{cor:gls-converse}, it suffices to prove that if $M^0 \in \cR_2(\F')$ has a basis $\beta = (\beta_i)_{i \in \Z/f\Z} = (x_i,y_i)_{i \in \Z/f\Z}$ such that the partial Frobenius maps $\Phi_{M^0,i}$, written with respect to $\beta$,
have matrices
\[
    B_{i} \begin{pmatrix}
    v^{r_{i,1}}  & 0 \\
    0 & v^{r_{i,2}}
    \end{pmatrix}
\]
for some $B_i \in \GL_2(\F'[\![v]\!])$, then the same holds with $r_{i,1},r_{i,2}$ replaced by $r'_{i,1},r'_{i,2}$ for all $i$, for each $\underline{r}' \in \{ \mu_j(\underline{r})$,  $\nu_j(\underline{r})$,  $\theta_j(\underline{r}) \}$. (The hypothesis $r_{1,j-1} - r_{2,j-1} \neq p$ is necessary to apply Corollary~\ref{cor:gls-converse} when $\underline{r}' = \theta_j(\underline{r})$, but not for any other part of the argument.) The key is simply that, since $r_{1,j} = r_{2,j}$, the matrix $\left(\begin{smallmatrix}
    v^{r_{j,1}}  & 0 \\
    0 & v^{r_{j,2}}
    \end{smallmatrix}\right)$ is scalar and therefore lies in the center of the matrix ring $M_2(\F'(\!(v)\!))$. 
Consider the basis $\beta'$ in which 
\[ \beta'_{j-1} = \beta_{j-1} B C, \quad \beta'_j = \beta_{j} D \]
and $\beta'_i = \beta_i$ if $i \neq j-1,j$. Here  $B,C,D \in \GL_2(\F'(\!(v)\!))$ are matrices to be chosen momentarily such that
\begin{itemize}
    \item $B \in \GL_2(\F'[\![v]\!])$, 
    \item $C,D$ are diagonal, and
    \item $C$ commutes with $B^{-1} B_{j-1}$.
\end{itemize}
Then the matrices of $\Phi_{M^0,i}$ with respect to $\beta'$ for $i = j-1, j, j+1$ are  checked to be 
\begin{multline*}
    B^{-1} B_{j-1}
    \begin{pmatrix}
    v^{r_{j-1,1}}  & 0 \\
    0 & v^{r_{j-1,2}}
    \end{pmatrix} C^{-1} , \\
     D^{-1} B_j \varphi(B) D \begin{pmatrix}
    v^{r_{j,1}}  & 0 \\
    0 & v^{r_{j,2}}
    \end{pmatrix} D^{-1} \varphi(C), \quad \textrm{and} \\        B_{j+1} \begin{pmatrix}
    v^{r_{j+1,1}}  & 0 \\
    0 & v^{r_{j+1,2}} 
    \end{pmatrix}   \varphi(D)
\end{multline*}  
respectively. The theorem follows by Remark~\ref{rem:again-order-irrelevant}, choosing $B,C,D$ as follows:
\begin{itemize}
    \item For $\theta_j$, take $B = B_{j-1}$, $C = \left(\begin{matrix}
    1  & 0 \\
    0 & v
    \end{matrix}\right)$ and $D = I$, 
    \item For $\mu_j$, take $B = B_{j-1}$,  $C = \left(\begin{matrix}
    v  & 0 \\
    0 & 1
    \end{matrix}\right)$ and $D = I$, 
    \item  For $\nu_j$, choose $B = B_j^{-1}$, so that $B_j \varphi(B) \equiv I \pmod{v}$, and take $C=I$ and $D = \left(\begin{matrix}
    1  & 0 \\
    0 & v
    \end{matrix}\right)$.
\end{itemize}
\end{proof}

\begin{cor}
        Suppose that $\underline{r}$ is $p$-bounded and irregular, with $r_{1,j} = r_{2,j}$. Then \[ \cX^{\underline{r}}_{\red} \subset \cX^{\underline{r}'}_{\red}\] for each $\underline{r}' \in \{\mu_j(\underline{r}), \nu_j(\underline{r}), \theta_j(\underline{r}) \}$,  provided if $\underline r'=\theta_j(\underline r)$ that we additionally assume  $r_{1,j-1} - r_{2,j-1} \neq p$, so that $\underline{r}'$ is $p$-bounded.
\end{cor}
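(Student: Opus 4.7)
The plan is to deduce this corollary as an essentially formal consequence of Theorem~\ref{thm:operator-lifts-exist}, once the relevant closed substacks of $\cX_{2,\red}$ have been identified as being determined by their finite type points.

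First I would recall from Section~\ref{sec:comparisons} that for any $p$-bounded Hodge type $\underline{s}$, the stack $\cX^{\underline{s}}_{\red}$ is by definition the underlying reduced substack of the closed $p$-adic formal algebraic substack $\cX^{\crys,\mathrm{triv},\underline{s}}_2 \subset \cX_2$, and that its finite type points over any finite extension $\F'/\F$ are precisely the Galois representations $\rhobar : G_K \to \GL_2(\F')$ having a crystalline lift of Hodge type $\underline{s}$ (\emph{cf.}~\cite[Thm.~4.8.12]{EGmoduli}). Under the hypotheses of the corollary this applies to both $\underline{s} = \underline{r}$ and $\underline{s} = \underline{r}'$:\ for $\underline{r}' = \mu_j(\underline{r})$ or $\nu_j(\underline{r})$ the $p$-boundedness of $\underline{r}'$ is evident from the definitions together with $r_{1,j} = r_{2,j}$, while for $\underline{r}' = \theta_j(\underline{r})$ it is precisely the hypothesis $r_{1,j-1} - r_{2,j-1} \neq p$ that guarantees the gap at $j-1$ does not exceed $p$.

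The inclusion at the level of finite type points is then immediate from Theorem~\ref{thm:operator-lifts-exist}:\ if $\rhobar \in \cX^{\underline{r}}_{\red}(\F')$, then by that theorem $\rhobar$ admits a crystalline lift of Hodge type $\underline{r}'$, so $\rhobar \in \cX^{\underline{r}'}_{\red}(\F')$. To upgrade this set-theoretic inclusion of finite type points to an inclusion of substacks, I would invoke the standard fact that a reduced algebraic stack of finite type over a Noetherian base is the reduced closure of its finite type points. Since $\cX^{\underline{r}}_{\red}$ is reduced and of finite type, its finite type points are dense; their image is contained in the closed substack $\cX^{\underline{r}'}_{\red} \subset \cX_{2,\red}$, hence so is the closure, giving $\cX^{\underline{r}}_{\red} \subset \cX^{\underline{r}'}_{\red}$.

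The entire content of the corollary sits in Theorem~\ref{thm:operator-lifts-exist}; once that is in hand the remaining argument is bookkeeping, so there is no substantive obstacle. If anything, the one point worth being careful about is the verification that $\underline{r}'$ is in each case $p$-bounded and non-Steinberg (so that Lemma~\ref{lem:p-bounded-ns} applies and $\cX^{\underline{r}'}_{\red}$ is well-defined with the expected finite type points), but this follows at once from the definitions of $\theta_j$, $\mu_j$, $\nu_j$ together with the standing assumption $r_{1,j} = r_{2,j}$ and, for $\theta_j$, the auxiliary assumption in the corollary statement.
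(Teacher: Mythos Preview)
Your proposal is correct and matches the paper's approach: the corollary is stated without proof precisely because it is meant to follow immediately from Theorem~\ref{thm:operator-lifts-exist} together with the fact that reduced closed substacks of $\cX_{2,\red}$ are determined by their finite type points.

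One small clarification: your final paragraph invokes Lemma~\ref{lem:p-bounded-ns} for the well-definedness of $\cX^{\underline{r}'}_{\red}$, but that lemma concerns the $\cZ$-stacks, not the $\cX$-stacks. The stack $\cX^{\underline{r}'}_{\red}$ is defined for \emph{any} Hodge type directly via \cite[Thm.~4.8.12]{EGmoduli}, and its finite type points are the representations with crystalline lifts of that Hodge type regardless of whether $\underline{r}'$ is Steinberg. So you need only check $p$-boundedness (which you do correctly), and the non-Steinberg remark can be dropped.
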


\begin{remark} It is natural to ask whether $\cX^{\underline{r}}_{\red}$ is equal to the reduced intersection of the irreducible components of $\cX_{2,\red}$ that contain it. For example, suppose that $f=2$, that $\underline{r}$ is irregular at $i=1$, and that $0 < r_{0,1} - r_{0,2} < p$.  Then \cite[Lem.~11.2.6]{DiamondSasaki} has the following geometric reinterpretation:\ if  $r_{0,1} - r_{0,2} \neq 1$, then $\cX^{\underline{r}}_{\red} = \cX^{\mu_1(\underline{r})}_{\red} \cap \cX^{\theta_1(\underline{r})}_{\red}$;\ if instead $r_{0,1} - r_{0,2} = 1$, then the  $\cX^{\underline{r}}_{\red} = \cX^{\mu_0(\mu_1(\underline{r}))}_{\red} \cap \cX^{\theta_1(\underline{r})}_{\red}$. Note that in the latter case $\mu_1(\underline{r})$ is irregular at $i=0$, and moreover $\nu_0(\mu_1(\underline{r})) \sim \underline{r}$, so that $\cX^{\underline{r}}_{\red} = \cX^{\mu_1(\underline{r})}_{\red}$. (The intersections here are all reduced intersections, i.e., we make no claims about intersection multiplicities.)

For general $f$, an explicit conjecture in the same spirit can be found in \cite[Conj.~4.4]{HWThesis}, and will be addressed in forthcoming work of Wiersema.

\end{remark}
         
\subsection{Shape-shifting}

We now give another proof of Theorem~\ref{thm:operator-lifts-exist}. The strategy is as follows.  By Proposition~\ref{prop:existence-tame-type} we can find a tame type $\tau$ and a profile $J$ such that $\underline{r} = r(\tau,J)$. Recall that the set $S^{\tau}(J)$ is precisely the set of embeddings at which $\underline{r}$ is irregular.

The substack $\cZ^{\tau}(J)$ is the scheme-theoretic image in $\cZ^{\dd,1}$ of $\cC^{\tau}(J)$, which by the results of Section~\ref{sec:shape} is the stack of Breuil--Kisin modules of type $\tau$ that have shape $\txI_\eta$ or $\txII$ when $i \in J$, and shape $\txI_{\eta'}$ or $\txII$ when $i \not\in J$. 

The key observation will be that if $U \subset \cC^{\tau}(J)$ is the closed substack of Breuil--Kisin modules with shape $\txII$ for each $i \in S^\tau(J)$, then the image of $U$ in $\cZ^{\tau}(J)$ is still dense (\emph{cf.}\ the proof of Theorem~\ref{thm:shape-shifting}). Since Breuil--Kisin modules of shape~$\txII$ may be regarded as having either ``shape $\txI_{\eta}$ or $\txII$'' or ``shape $\txI_{\eta'}$ or $\txII$'', we see that  if $J'$ is any other profile such that the symmetric difference $J \triangle J'$ is a subset of $S^{\tau}(J)$, then 
 $U \subset \cC^{\tau}(J')$ as well. (This observation is the source of the name \emph{shape-shifting}.) It follows that $\cZ^{\tau}(J)$ is contained in $\cZ^{\tau}(J')$. Finally, setting $\underline{r'} = r(\tau,J')$, Theorem~\ref{thm:irregular-locus} and the results of Section~\ref{sec:comparisons} imply that $\cX^{\underline{r}}_{\red} \subset \cX^{\underline{r}'}_{\red}$. Note that in this argument the inclusion of stacks comes first, and the statement about crystalline lifts is the corollary.

 In fact the collection of profiles~$J'$ to which the shape-shifting argument can be made to apply is somewhat larger than we have described above.

\begin{remark}
    In the cuspidal case,  each profile $J \subset \Z/f'\Z$ has the property that $i \in J$ if and only if $i+f \not \in J$.  It follows that the symmetric difference $J \triangle J' \subset \Z/f'\Z$ of two profiles has the property that $i \in J \triangle J'$ if and only if $i+f \in J \triangle J'$, and may thus be identified with a well-defined subset of $\Z/f\Z$. We will freely make this identification in what follows. This allows us sensibly to write $J \triangle J' \subset S^{\tau}(J)$  in the cuspidal case and not only in the principal series case, even though in the cuspidal case $J \triangle J'$ is literally a subset of $\Z/f'\Z$ while $S^\tau(J)$ is a subset of $\Z/f\Z$. 
\end{remark}

To implement the above strategy, we begin with a brief review of some results from \cite[\S\S3--5]{cegsC}. As we have already alluded to in the proof of Proposition~\ref{prop:dim-Z-tau-J},  the constructions of \cite{cegsC} furnish us with a morphism
\[ \xi : \Spec B^{\dist} \to \cC^{\tau}(J) \to \cZ^{\tau}(J) \]
such that the maps from $\Spec B^{\dist}$ to both $\cC^{\tau}(J)$ and $\cZ^{\tau}(J)$ are scheme-theoretically dominant. The source $\Spec B^{\dist}$ has the following description:\ there are rank one Breuil--Kisin modules $\gM(J)$ and $\gN(J)$ such that $\Spec B^{\dist}$ is a universal family of extensions of unramified twists of $\gM(J)$ by unramified twists of $\gN(J)$;\ the superscript `$\dist$' indicates that for certain $(\tau,J)$ --- namely if $\gM(J)[1/u] \cong \gN(J)[1/u]$ --- then we restrict from the whole universal family to the (dense, open) subfamily whose $\F'$-points are extensions of $\gM(J)_{\F',a}$ by $\gN(J)_{\F',b}$ with $a \neq b$. 

The rank one Breuil--Kisin modules $\gM(J)_{\F',a}$ and $\gN(J)_{\F',b}$ admit the following descriptions. Set $(c_i,d_i) = (k_i,k'_i)$ if $i \in J$, and $(c_i,d_i) = (k'_i,k_i)$ if $i \not \in J$. Define 
\begin{align*}
r_i = \begin{cases}
[d_i - c_i] &\text{ when } (i-1, i) \text{ is a transition},\\
p^{f'} - 1 &\text{ when } (i-1, i) \text{ is not a transition}.
\end{cases}\\
s_i = \begin{cases}
[c_i -d_i] &\text{ when } (i-1, i) \text{ is a transition},\\
0 &\text{ when } (i-1, i) \text{ is not a transition}.
\end{cases}
\end{align*}
Finally set $a_0 = a$, $b_0 = b$, and $a_i = b_i = 1$ if $i \neq 0$. Then $\gM(J)_{\F',a}$ is the Breuil--Kisin module $\gM(r,a,c)$ of \cite[Lem~4.1.1]{cegsC} (with $\F'$-coefficients), and $\gN(J)_{\F',b}$ is the Breuil--Kisin module $\gM(s,b,c)$.  In particular, the $i$-th component $(\gM(J)_{\F',a})_i$ has a basis element $m_i$ on which $I(K'/K)$ acts via $\eta$ if $i \in J$ and $\eta'$ if $i \not\in J$, while the reverse holds for basis elements $n_i$ of  $(\gN(J)_{\F',b})_i$.

As explained in \cite[Rem.~4.1.9]{cegsC}, an extension $\gP$ of $\gM(J)_{\F',a}$ by $\gN(J)_{\F',b}$ has partial Frobenius given by 
\begin{align*}
\Phi_{\gP,i}(1 \otimes n_{i-1}) &= b_i u^{s_i} n_i ,\\
\Phi_{\gP,i}(1 \otimes m_{i-1}) &= a_i u^{r_i} m_i + h_i u^{\delta_i} n_i,
\end{align*}
where $h_i \in \F'$ and $\delta_i = 0$ if $(i-1,i)$ is a transition while $\delta_i = [c_i-d_i]$ otherwise. The descent data on $\gP$ is given by specifying that if we define
$\beta_i = (m_i,n_i)$ for $i \in J$ and $\beta_i = (n_i,m_i)$ for $i \not\in J$, then $\beta = (\beta_i)$ is an eigenbasis. Observe for later reference that $\gP$ has shape $\txII$ at $i$ if and only $(i-1,i)$ is a transition and $h_i = 0$ (in which case the matrix of $\Phi_{\gP,i}$ with respect to $\beta$ is anti-diagonal). In this manner we identify $\Ext^1(\gM(J)_{\F',a}, \gN(J)_{\F',b})$ with the $f$-dimensional vector space spanned by the elements $h_i \in \F'$.

To describe the subspace \[ \kExt^1(\gM(J)_{\F',a}, \gN(J)_{\F',b}) \subset \Ext^1(\gM(J)_{\F',a}, \gN(J)_{\F',b})\] of extensions that split after inverting $u$, we need to introduce some notation.

\begin{defn}
An \emph{interval} in $\Z/f\Z$ is the image in $\Z/f\Z$ of any interval in $\Z$.    If $S$ is any subset of $\Z/f\Z$ write $S(n)$ for the shift of $S$ by $n$, and $S^e = S \cup S(-1)$. Any subset $S \subset \Z/f\Z$ then has a unique decomposition $S = I_1 \coprod \cdots \coprod I_\ell$ as a disjoint union of maximal intervals. The maximality condition is equivalent to the condition that $I_i^e \cap I_j^e = \varnothing$ for all $i \neq j$.
\end{defn}

The discussion in \cite[\S5.1]{cegsC} establishes that $\kExt^1(\gM(J)_{\F',a}, \gN(J)_{\F',b})$ has dimension $|S^\tau(J)|$, and in fact that it has the following more precise description. Note that the assumption that $a\neq b$ if 
if $\gM(J)[1/u] \cong \gN(J)[1/u]$ implies that we are not in the ``exceptional case'' of \cite[Prop.~5.1.8]{cegsC}.
In what follows, we let $I_1 \coprod \cdots \coprod I_\ell$ be the decomposition of $S^{\tau}(J)$ as a disjoint union of maximal intervals. (For simplicity of notation, we suppress $\tau, J$ from the notation for $I_1,\ldots,I_\ell$.)

\begin{prop}\label{prop:ker-ext}
Suppose that $S^\tau(J) \neq \Z/f\Z$. For each $k = 1,\ldots,\ell$ there is a hyperplane $V_k = \{ \sum_{i \in I_k^e} \alpha_i h_i = 0 \}$  with each $\alpha_i \neq 0$ such that 
\[ \kExt^1(\gM(J)_{\F',a} \gN(J)_{\F',b}) \cong \bigoplus_{k=1}^\ell V_k \]
under the identification discussed above.

If instead $S^\tau(J) = \Z/f\Z$ then $\kExt^1(\gM(J)_{\F',a} \gN(J)_{\F',b})$ is equal to  all of $\Ext^1(\gM(J)_{\F',a}, \gN(J)_{\F',b})$.
\end{prop}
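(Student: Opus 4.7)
The plan is to compute $\kExt^1(\gM(J)_{\F',a}, \gN(J)_{\F',b})$ explicitly by writing the condition of being split after inverting $u$ as a linear system in the parameters $(h_i)$, then analyzing that system relative to the decomposition of $S^\tau(J)$ into maximal intervals. First I would note that an extension $\gP$ with parameters $(h_i)$ becomes split after inverting $u$ if and only if one can find elements $\widetilde{m}_i = m_i + f_i n_i \in \gP_i[1/u]$, with $f_i$ in the $\eta(\eta')^{-1}$-isotypic component of $\F'(\!(u)\!)$, satisfying $\Phi_{\gP,i}(1 \otimes \widetilde{m}_{i-1}) = a_i u^{r_i} \widetilde{m}_i$. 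Plugging into the formulas displayed above the proposition converts this into the system
\[ a_i u^{r_i} f_i - b_i u^{s_i} \varphi(f_{i-1}) = h_i u^{\delta_i}, \qquad i \in \Z/f'\Z, \]
and the problem becomes one of determining, as a function of $(h_i)$, whether this system is solvable.

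Next I would analyze the system index by index relative to $S^\tau(J)$. When $(i-1,i)$ is not a transition we have $r_i = p^{f'}-1$, $s_i = 0$, and $\delta_i = [c_i - d_i]$, and the equation uniquely determines $f_i$ in terms of $f_{i-1}$ for any value of $h_i$; so non-transition indices impose no constraint. When $(i-1,i)$ is a transition the equation is more delicate: comparing $u$-adic valuations of the three terms, one sees that $h_i$ generally enters as an obstruction. I would iterate the recursion through each maximal interval $I_k$ of $S^\tau(J)$, starting at the unique "input" index $i \in I_k^e \setminus I_k$ where $f_i$ is still free, and propagate through the transitions at $i \in I_k^e$ until emerging. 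This iteration produces a single linear consistency condition on the $h_i$ for $i \in I_k^e$, giving a hyperplane $V_k \subset \bigoplus_{i\in I_k^e} \F' h_i$. Since the coefficients $\alpha_i$ that appear are, up to sign, products of powers of the units $a_j, b_j$ (and are nonzero once the exceptional case is excluded by passage to $B^{\dist}$), none of them vanish.

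The dimension count matches: $\dim V_k = |I_k^e| - 1 = |I_k|$ and $\sum_k |I_k| = |S^\tau(J)|$, consistent with the known dimension. Because the sets $I_k^e$ are pairwise disjoint (this is precisely the maximality condition $I_i^e \cap I_j^e = \varnothing$ for $i \neq j$), the hyperplanes $V_k$ live in independent coordinate subspaces, so $\kExt^1 \cong \bigoplus_k V_k$. For the degenerate case $S^\tau(J) = \Z/f\Z$, every pair $(i-1,i)$ is a transition, and the recursion becomes a single cycle around $\Z/f\Z$; a direct computation shows the composite is the identity, so no consistency condition arises and $\kExt^1 = \Ext^1$. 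The main obstacle I anticipate is the careful $u$-adic bookkeeping in the iteration through an interval $I_k$: one must correctly track which powers of $u$ survive after repeated substitution of $f_i = a_i^{-1} u^{-r_i}(h_i u^{\delta_i} + b_i u^{s_i} \varphi(f_{i-1}))$, and verify that each $\alpha_i$ (including those at the two endpoints of $I_k^e$, where a transition meets a non-transition) is nonzero, so that $V_k$ is genuinely a hyperplane rather than a larger subspace.
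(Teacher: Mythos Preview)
The paper does not give its own proof of this proposition; it is stated as a consequence of the discussion in \cite[\S5.1]{cegsC}. Your direct computation is presumably close in spirit to what that reference does, and the recursion $a_i u^{r_i} f_i - b_i u^{s_i} \varphi(f_{i-1}) = h_i u^{\delta_i}$ you write down is correct.

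However, there is a gap in the analysis. Your justification that non-transition indices impose no constraint --- that $f_i$ is uniquely determined by $f_{i-1}$ --- does not actually distinguish transitions from non-transitions: in $\F'(\!(u)\!)$ one can \emph{always} solve $f_i = a_i^{-1} u^{-r_i}(h_i u^{\delta_i} + b_i u^{s_i}\varphi(f_{i-1}))$, at transitions and non-transitions alike. So this step does not explain where any constraints come from.

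More seriously, the identification $\kExt^1 = \bigoplus_k V_k$ (with each $V_k \subset \bigoplus_{i \in I_k^e} \F' h_i$) implicitly forces $h_i = 0$ for every $i \notin \bigcup_k I_k^e$, since $\bigoplus_k V_k$ sits inside $\bigoplus_{i \in \bigcup_k I_k^e} \F' h_i \subsetneq \F'^f$ whenever $S^\tau(J)^e \neq \Z/f\Z$. Such indices can include non-transition indices as well as transitions not lying in $S^\tau(J)$. Your interval-by-interval iteration only touches indices in the $I_k^e$ and produces the $\ell$ hyperplane relations; it does not account for these further vanishing conditions, which contribute $f - |S^\tau(J)| - \ell$ of the total codimension $f - |S^\tau(J)|$. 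Filling this gap requires a genuinely careful $v$-adic analysis of the full cyclic system (or equivalently of the map $C^0_u/C^0 \to C^1_u/C^1$), and this is where the actual work in \cite[\S5.1]{cegsC} lies --- somewhat more than the bookkeeping you anticipate.
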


\begin{defn}
Continue to let $I_1 \coprod \cdots \coprod I_\ell$ be the decomposition of $S^{\tau}(J)$ as a disjoint union of maximal intervals as above. If $I \neq \Z/f\Z$ is an interval let $m(I)$ be the unique element of $I^e \setminus I$, and set 
\[ I' = \begin{cases} I^e & \text{if } (m(I)-1,m(I)) \text{ is a transition for $J$} \\
I & \text{if } (m(I)-1,m(I)) \text{ is not a transition for $J$}.
\end{cases}
\]
If $I = \Z/f\Z$ set $I' = I$. Define $S^{\tau}(J)' = I'_1 \coprod \cdots \coprod I'_\ell$.
\end{defn}

We can now prove the following.

\begin{thm}\label{thm:shape-shifting}
Let $J'$ be any profile with $J \triangle J' \subset S^\tau(J)'$. If $S^{\tau}(J) \neq \Z/f\Z$, assume further that $J \triangle J'$ does not contain $I^e_k$ for any $k$.

Write $\underline{r} = r(\tau,J)$ and $\underline{r}' = r(\tau,J')$. Then $\cX^{\underline{r}}_{\red} \subset \cX^{\underline{r}'}_{\red}$.
\end{thm}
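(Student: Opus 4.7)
My plan is first to establish the inclusion $\cZ^\tau(J) \subset \cZ^\tau(J')$ of closed substacks of $\cZ^{\dd,1}$, and then to transfer it to the Emerton--Gee side. For the transfer, Theorem~\ref{thm:irregular-locus} identifies $\cZ^\tau(J)$ and $\cZ^\tau(J')$ with $\cZ^{\underline r}_{\red}$ and $\cZ^{\underline r'}_{\red}$ respectively, so an inclusion of $\cZ$-stacks translates to the statement that every mod $p$ representation with a crystalline lift of Hodge type $\underline r$ also has one of type $\underline r'$. Corollary~\ref{cor:loci-in-Zdd} then matches the finite type points of $\cX^{\underline r}_{\red}$ and $\cX^{\underline r'}_{\red}$ with those of their $\cZ$-counterparts, and since both $\cX^{\underline r}_{\red}$ and $\cX^{\underline r'}_{\red}$ are reduced closed substacks of $\cX_{2,\red}$ determined by their finite type points, the inclusion transfers to $\cX^{\underline r}_{\red} \subset \cX^{\underline r'}_{\red}$.

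For the $\cZ$-side inclusion, I would exploit the scheme-theoretically dominant morphism $\xi \colon \Spec B^{\dist} \to \cZ^\tau(J)$, whose source parameterizes extensions of unramified twists of $\gM(J)$ by unramified twists of $\gN(J)$ via parameters $(a, b)$ and extension coefficients $(h_i)_{i \in \Z/f\Z}$. Let $Y \subset \Spec B^{\dist}$ be the closed subscheme cut out by $h_i = 0$ for all $i \in J \triangle J'$. The hypothesis $J \triangle J' \subset S^\tau(J)'$ implies that $(i-1, i)$ is a transition for $J$ at every $i \in J \triangle J'$: when $i \in S^\tau(J)$ this follows from the formula for $s_{J,i}$, since the two non-transition cases force $s_{J,i} \ge 0$; when $i = m(I_k)$ with $I'_k = I_k^e$, the transition is explicitly part of the definition of $I'_k$. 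Inspecting the explicit matrices for the partial Frobenius from \cite[Rem.~4.1.9]{cegsC} in each of the four sub-cases determined by $i \in J$ vs.\ not and $i-1 \in J$ vs.\ not, one sees that at a transition position the shape is $\txII$ precisely when $h_i = 0$. Since shape $\txII$ is compatible with both the conditions defining $\cL^\tau(J'') = \cC^\tau(J'')$ (Theorem~\ref{thm:Ltau=Ctau}), modules parameterized by $Y$ lie in $\cC^\tau(J) \cap \cC^\tau(J')$, and hence the scheme-theoretic image of $Y$ in $\cR^\dd$ lies in $\cZ^\tau(J) \cap \cZ^\tau(J')$.

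The main step is to show $Y \to \cZ^\tau(J)$ is scheme-theoretically dominant, for then $\cZ^\tau(J)$ equals this image and is therefore contained in $\cZ^\tau(J')$. Since both stacks are reduced and of finite type, it suffices to show that every $\F'$-point $x$ of $\cZ^\tau(J)$ lying in the image of $\xi$ admits a preimage in $Y$ (the image of $\xi$ being dense in $\cZ^\tau(J)$ by scheme-theoretic dominance). Fixing one preimage $\gP_0$ of $x$ with parameters $(a, b, (h_i^0))$, the other preimages of $x$ sharing $(a,b)$ form a torsor for $\kExt^1(\gM(J)_{\F',a}, \gN(J)_{\F',b})$ (as in the proof of Proposition~\ref{prop:dim-Z-tau-J}), with an element $\epsilon$ translating $h_i^0$ to $h_i^0 + \epsilon_i$; the task is to find $\epsilon$ in $\kExt^1$ with $\epsilon_i = -h_i^0$ for $i \in J \triangle J'$. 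In the case $S^\tau(J) = \Z/f\Z$, Proposition~\ref{prop:ker-ext} gives $\kExt^1 = \Ext^1$ and the constraints are trivially solvable. Otherwise, Proposition~\ref{prop:ker-ext} decomposes $\kExt^1 \cong \bigoplus_k V_k$ with each $V_k$ the hyperplane $\sum_{i \in I_k^e} \alpha_i h_i = 0$ ($\alpha_i \neq 0$), and the constraints decouple across $k$; the hypothesis $J \triangle J' \not\supset I_k^e$ leaves at least one free coordinate $j \in I_k^e \setminus (J \triangle J')$ in each $V_k$, so the hyperplane condition becomes a non-trivial affine equation in the free coordinates and admits a solution. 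The hardest part of the write-up will be packaging this fiber-and-torsor analysis rigorously, particularly in the \emph{exceptional} case where $\gM(J)[1/u] \cong \gN(J)[1/u]$ (the motivation for working with $B^{\dist}$ in the first place); but the essential content is simply the linear-algebraic observation that setting $h_i = 0$ for $i \in J \triangle J'$ is compatible with $\kExt^1$ exactly when no $V_k$ is fully constrained by the symmetric difference.
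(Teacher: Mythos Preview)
Your proposal is correct and follows essentially the same approach as the paper: both arguments use the scheme-theoretically dominant family $\Spec B^{\dist}\to\cZ^\tau(J)$, show via Proposition~\ref{prop:ker-ext} and the hypothesis $I_k^e\not\subset J\triangle J'$ that every point in the (dense) image has a preimage with $h_i=0$ for $i\in J\triangle J'$, observe this forces shape $\txII$ at those $i$ (so the preimage lies in $\cC^\tau(J')$), and conclude $\cZ^\tau(J)\subset\cZ^\tau(J')$. Your write-up is somewhat more detailed than the paper's---you spell out why $(i-1,i)$ is a transition for each $i\in S^\tau(J)'$, the torsor/linear-algebra step, and the transfer to the Emerton--Gee side via Theorem~\ref{thm:irregular-locus} and Corollary~\ref{cor:loci-in-Zdd}---but the strategy is identical.
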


\begin{proof}
Let $V \subset \cZ^{\tau}(J)$ be the collection of finite type points lying in the image of $\Ext^1(\gM(J)_{\F',a}, \gN(J)_{\F',b})$ for some $a,b$ with $a\neq b$, so that $V$ is dense in $\cZ^\tau(J)$. Proposition~\ref{prop:ker-ext} implies that each point in $V$ has a preimage $\gP \in \Ext^1(\gM(J)_{\F',a}, \gN(J)_{\F',b})$ with $h_i = 0$ for all $i \in J \triangle J'$ (here using the fact that $\alpha_i \neq 0$ for all $i$, and the hypothesis that $I^e_k \not\subset J\triangle J'$ for any $k$). By the discussion at the beginning of the section, this preimage has shape $\txII$ at all $i \in J\triangle J'$, here using in an essential way that $i = m(I_k) \in I_k'$ is allowed only when $(m(I)-1,m(I))$ is a transition. Therefore $\gP \in \cC^{\tau}(J')(\F')$. It follows that $V \subset \cZ^{\tau}(J')$, and so also $\cZ^{\tau}(J) \subset \cZ^{\tau}(J')$.
\end{proof}

\begin{cor}
    Theorem~\ref{thm:operator-lifts-exist} holds  for  $\nu_j(\underline{r})$ and $\theta_j(\underline{r})$.
\end{cor}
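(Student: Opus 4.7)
The plan is to produce the inclusions $\cX^{\underline r}_{\red} \subset \cX^{\underline{r}'}_{\red}$ for $\underline{r}' \in \{\theta_j(\underline{r}), \nu_j(\underline{r})\}$ directly from Theorem~\ref{thm:shape-shifting}, and then to deduce the crystalline lift statement from this inclusion together with the description of the finite-type points of $\cX^{\underline{r}'}_{\red}$ from \cite[Thm.~4.8.12]{EGmoduli}. Given $\underline r$ with $r_{1,j} = r_{2,j}$, Proposition~\ref{prop:existence-tame-type} produces a non-scalar tame type $\tau$ and a profile $J$ with $\underline r \sim r(\tau, J)$, and the irregularity hypothesis on~$\underline r$ forces $j \in S^\tau(J)$.

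For $\nu_j$, we take $J \triangle J' = \{j\}$ (with the simultaneous toggle at $j + f$ in the cuspidal case, so that $J'$ remains a profile). Since $\{j\} \subset I_k \subset S^\tau(J)'$ for the maximal interval $I_k$ of $S^\tau(J)$ containing $j$, and since $\{j\} \subsetneq I^e_k$, both hypotheses of Theorem~\ref{thm:shape-shifting} hold without any constraint on $\tau$. A direct case analysis using~\eqref{def:s_J} verifies that the shift sends $s_{J,j}$ from $-1$ to $0$ (making the difference at $j$ equal to $1$) and changes $s_{J,j+1}$ in such a way that the new difference at $j+1$ becomes $p$ minus the old difference; this matches the prescription of $\nu_j$. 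For $\theta_j$, we take $J \triangle J' = \{j-1\}$, analogously modified in the cuspidal case. The shift is allowed precisely when $\{j-1\} \subset S^\tau(J)'$, which holds automatically when $r_{1,j-1} = r_{2,j-1}$ (so that $j-1$ already lies in $S^\tau(J)$), and otherwise requires $(j-2, j-1)$ to be a transition for $J$. In this second case, Remark~\ref{rem:cant-choose-j-when} applied at position $j-1$ shows that we may freely arrange this when making our initial choice in Proposition~\ref{prop:existence-tame-type}, with exceptions only when $f = 1$ (excluded by assumption) or in a single degenerate configuration. A similar computation then shows that the shift increases $s_{J,j-1}$ by $1$ and sends $s_{J,j}$ to $p-1$, exactly matching $\theta_j$.

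The main obstacle is the combinatorial bookkeeping needed to verify that the proposed shifts of $J$ yield Hodge types equivalent under $\sim$ (as in Definition~\ref{def:equiv-on-Hodge-types}) to $\theta_j(\underline r)$ or $\nu_j(\underline r)$. While tracking the effect on $s_{J,i}$ is routine, one must also follow the change in $t_{J,i}$, and therefore in the character $\Theta_J$ appearing in the second coordinate of the Hodge type; the net change must be absorbed into a crystalline character whose Hodge type lies in $\Lambda$. The remaining difficulty is the single edge case of Remark~\ref{rem:cant-choose-j-when} for $\theta_j$, where $\underline r$ is Steinberg at every embedding except $(j-1, j)$, with differences $(p-1, 0)$; one would handle this either by an alternative choice of $\tau$ (for example cuspidal rather than principal series) or by simply invoking the direct computational proof given earlier in this section.
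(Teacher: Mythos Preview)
Your approach is essentially the same as the paper's: choose $(\tau,J)$ via Proposition~\ref{prop:existence-tame-type}, identify $\nu_j$ with the shift $J\triangle J'=\{j\}$ and $\theta_j$ with $J\triangle J'=\{j-1\}$, verify the hypotheses of Theorem~\ref{thm:shape-shifting}, and for $\theta_j$ use Remark~\ref{rem:cant-choose-j-when} to force a transition at $j-1$ when $j-1\notin S^\tau(J)$. The paper separates off the subcase $j-1\in S^\tau(J)$ by observing $\theta_j(\underline r)=\nu_{j-1}(\underline r)$ there, but your unified treatment is fine.

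The gap is in the degenerate edge case. Your first proposed fix (``choose a cuspidal type instead'') does not work: in the configuration of Remark~\ref{rem:cant-choose-j-when}(2), the only free choice in Proposition~\ref{prop:existence-tame-type} is whether $(j-2,j-1)$ is a transition, and the parity count in that proof shows that making it a transition yields an \emph{even} number of transitions among $0,\ldots,f-1$, hence forces a principal series construction --- which is then scalar. So one cannot simply switch to a cuspidal type. Your second proposed fix (invoke the direct computational proof) is logically valid but defeats the purpose of the section, which is to give an independent shape-shifting argument.

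The paper's resolution is worth noting: the edge case is exactly when $\theta_j(\underline r)$ is Steinberg. One then applies the already-established $\nu$ operators $\nu_{j+1},\nu_{j+2},\ldots,\nu_{j-1}$ in succession to $\underline r$, arriving at a Hodge type $\sim\BT+\underline\lambda$ with $\theta_j(\underline r)\sim\St+\underline\lambda$, and concludes via the standard fact that any representation with a Barsotti--Tate lift also has a Steinberg lift. This keeps the argument self-contained within the shape-shifting framework.
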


\begin{proof}
   Using Proposition~\ref{prop:existence-tame-type} choose $\tau$ and $J$ so that $\underline{r} = r(\tau,J)$. By hypothesis $r_{j,1} = r_{j,2}$ and so we have $j \in S^\tau(J)$. Taking the unique profile $J'$ with $J \triangle J' = \{j\}$, one computes that $\nu_j(\underline{r}) \sim r(\tau,J')$. The result for $\nu_j$ now follows from Theorem~\ref{thm:shape-shifting}.
   
   The argument for $\theta_j(\underline{r})$ is similar but slightly more involved. If $j-1 \in S^\tau(J)$ then $\theta_j(\underline{r}) = \nu_{j-1}(\underline{r})$ and we are done by the previous paragraph. Otherwise, since we have assumed that $r_{1,j-1} - r_{2,j-1} \neq p$, we have $r_{1,j-1}-r_{2,j-1} \in [1,p-1]$. By Remark~\ref{rem:cant-choose-j-when} we may choose $(\tau,J)$ with $\underline{r} = r(\tau,J)$  such that there is a transition at $j-1$, unless we are in the exceptional case described in Remark~\ref{rem:cant-choose-j-when}(2). Observe that this exceptional case occurs precisely when $\theta_j(\underline{r})$ is Steinberg.

 Assume first that $\theta_j(\underline{r})$ is non-Steinberg. Then as explained above we may arrange that there is a transition at $j-1$, so that $j-1 \in S^\tau(J)'$. Taking the unique profile~$J'$ with $J \triangle J' = \{j-1\}$, one computes that $\theta_j(\underline{r}) \sim r(\tau,J')$. The result in this case now follows from Theorem~\ref{thm:shape-shifting}.
 

 Finally suppose that $\theta_j(\underline{r})$ is Steinberg.  
 Applying $\nu_{j+1},\nu_{j+2},\ldots,\nu_{j-1}$ successively to $\underline{r}$ one obtains a Hodge type of the form $\BT + \underline{\lambda}$ with $\underline{\lambda} \in \Z^f$, and such that $\theta_j(\underline{r}) \sim \St + \underline{\lambda}$. Here $\St$ is the Steinberg Hodge type $\{p,0\}_{i\in \Z/f\Z}$. The result for $\nu$ shows that $\rhobar$ has a crystalline lift of Hodge type $\BT + \underline{\lambda}$. Since it is standard that a representation with a crystalline lift of Hodge type $\BT$ also has a crystalline lift of Hodge type $\St$, the result follows.
\end{proof}

The argument for $\mu_j$ proceeds somewhat differently. For brevity, since the argument for Theorem~\ref{thm:operator-lifts-exist} in the previous subsection was complete, we content ourselves with giving a sketch.

\begin{prop}
    Theorem~\ref{thm:operator-lifts-exist} holds  for  $\mu_j(\underline{r})$.
\end{prop}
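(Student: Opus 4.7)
My plan is to give a shape-shifting style proof for $\mu_j$, identifying a tame type $\tau$ and profiles $J, J'$ with $r(\tau, J) \sim \underline{r}$ and $r(\tau, J') \sim \mu_j(\underline{r})$, and then establishing $\cZ^\tau(J) \subset \cZ^\tau(J')$ directly.

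First, using Proposition~\ref{prop:existence-tame-type} and Remark~\ref{rem:cant-choose-j-when}, I would choose a non-scalar $\tau$ and a profile $J$ with $r(\tau, J) \sim \underline r$ and no transition at $j-1$ in $J$. This is possible because the assumption that $\mu_j(\underline r)$ is $p$-bounded forces $r_{1, j-1} > r_{2, j-1}$: then either $r_{1, j-1} - r_{2, j-1} \in [1, p-1]$ and the flexibility of Remark~\ref{rem:cant-choose-j-when} permits choosing no transition at $j-1$, or $r_{1, j-1} - r_{2, j-1} = p$, in which case the absence of transition at $j-1$ is automatic by the analysis in the proof of Proposition~\ref{prop:existence-tame-type}. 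Setting $J' := J \triangle \{j-1\}$, a direct calculation via \eqref{def:s_J} and \eqref{def:theta_J} shows $r(\tau, J') \sim \mu_j(\underline r)$: the shifts $s_{J', j-1} = s_{J, j-1} - 1$ and $s_{J', j} = p-1$ work out precisely as required, and the character change $\Theta_{J'}\Theta_J^{-1}$ turns out to be trivial, since the contribution $(\omega'_j)^{-p} = (\omega'_{j-1})^{-1}$ coming from $\Delta t_{J, j} = -p$ is exactly cancelled by the contribution $\omega'_{j-1}$ coming from $\Delta t_{J, j-1} = +1$, the latter being forced by the non-transition at $j-1$.

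The main obstacle is that Theorem~\ref{thm:shape-shifting} does not apply directly: the non-transition at $j-1$ means $\{j-1\} \not\subset S^\tau(J)'$, and generic Breuil--Kisin modules in $\cC^\tau(J)$ have shape $\txI_\eta$ (not II) at $j-1$ and are therefore not themselves objects of $\cC^\tau(J')$. To surmount this I would exploit the shape II at $j$ coming from the irregularity $r_{j, 1} = r_{j, 2}$: for each $\gM \in \cC^\tau(J)(\F')$ I would construct a companion lattice $\gM' \in \cC^\tau(J')(\F')$ in the same \'etale $\varphi$-module $\gM[1/u]$ by modifying the lattice at positions $j-1$ and $j$, using that the scalar (and hence central) partial Frobenius matrix at $j$ allows a $v$-factor introduced at $j-1$ to be absorbed after applying $\varphi$ without disturbing the strong determinant condition. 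This is the Breuil--Kisin incarnation of the basis change in the direct proof, and yields $\cZ^\tau(J) \subset \cZ^\tau(J')$, hence $\cX^{\underline r}_{\red} \subset \cX^{\mu_j(\underline r)}_{\red}$ via Theorem~\ref{thm:EG-CEGS-isom}.
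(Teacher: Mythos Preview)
Your setup coincides with the paper's: both choose $(\tau,J)$ with no transition at $j-1$, take $J'=J\triangle\{j-1\}$, and verify $r(\tau,J')\sim\mu_j(\underline r)$. One small omission: the paper first disposes of the case where $\underline r$ is irregular at $j-1$ by observing that then $\mu_j(\underline r)=\nu_{j-1}(\underline r)$ (after reordering), reducing to the already-proved $\nu$ case. Your phrase ``the assumption that $\mu_j(\underline r)$ is $p$-bounded'' is not actually a hypothesis of Theorem~\ref{thm:operator-lifts-exist} for $\mu_j$, so you should handle that case separately.

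The key step differs substantially. You propose to produce, for each $\gM\in\cC^\tau(J)(\F')$, a companion lattice $\gM'\in\cC^\tau(J')(\F')$ inside $\gM[1/u]$. You yourself describe this as ``the Breuil--Kisin incarnation of the basis change in the direct proof,'' and indeed the most natural way to carry it out is to pass through Theorem~\ref{prop:modified-basis-of-invariants} in both directions: translate to $M^0$, perform the $\mu_j$ basis change of Theorem~\ref{thm:operator-lifts-exist}, and translate back. This is correct, but it is the direct proof repackaged rather than an independent shape-shifting argument. (Note also that it is not the matrix $A_{\beta,j}$ itself that is scalar; what is central is the diagonal factor $\mathrm{diag}(v,v^{-s_{J,j}})=v\cdot I$ appearing after the base change of Theorem~\ref{prop:modified-basis-of-invariants}.) Constructing $\gM'$ directly in eigenbasis coordinates, without passing through $M^0$, is not as immediate as your sketch suggests.

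The paper instead argues from the $J'$ side by a dimension count. Since $(j-2,j-1)$ is a transition for $J'$, the codimension-$1$ locus $\{h'_{j-1}=0\}$ in the universal extension family for $J'$ consists of Breuil--Kisin modules of shape $\txII$ at $j-1$, hence lying in $\cC^\tau(J)$ as well as $\cC^\tau(J')$. One then computes, using Proposition~\ref{prop:ker-ext} for $J'$, that the image of this locus in $\cZ^{\dd,1}$ has dimension $[K:\Qp]-|S^\tau(J)|$. Since $\cZ^\tau(J)$ is irreducible of exactly this dimension, the image must be dense in $\cZ^\tau(J)$, and the inclusion $\cZ^\tau(J)\subset\cZ^\tau(J')$ follows. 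This avoids explicit lattice modification entirely and instead exploits the extension-theoretic description of $\cC^\tau(J')$.
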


\begin{proof}[Sketch of proof.]
If $\underline{r}$ is irregular at $j-1$ then $\mu_j = \nu_{j-1}$, so we may assume that $\underline{r}$ is regular at $j-1$. Using Proposition~\ref{prop:existence-tame-type} choose $\tau$ and $J$ so that $\underline{r} = r(\tau,J)$. By Remark~\ref{rem:cant-choose-j-when} we can always arrange that $j-1$ is not a transition, and we do so. Let $J'$ be the unique profile such that $J \triangle J' = \{j-1\}$. Then $r(\tau,J') \sim \mu(\underline{r})$. Note that $|S^{\tau}(J')| = |S^{\tau}(J)|-\delta$ where $\delta = 1$ if $r_{1,j-1}-r_{2,j-1} > 1$ and $\delta = 0$ otherwise. In the latter case $j-1 \in S^\tau(J').$

Unfortunately we cannot apply Theorem~\ref{thm:shape-shifting} to the pair $J,J'$, because $j-1 \not\in S^{\tau}(J)'$. Instead we argue as follows. The extensions of $\gM(J')_{\F',a}$ by $\gN(J')_{\F',b}$ have a description that is parallel to the one for $\gM(J)_{\F',a}$ by $\gN(J)_{\F',b}$. We let $h'_i$ denote the extension parameters for $J'$ that were denoted $h_i$ for $J$.  Since $j-1$ \emph{is} a transition in $J'$, the locus $\{ h'_{j-1} = 0 \} \subset \cC^{\tau}(J')(\F')$ consists of Breuil--Kisin modules having shape $\txII$ at $j-1$,  which therefore also lie in $\cC^{\tau}(J)(\F')$. 

If $\delta = 1$ then $j-1,j \not\in S^\tau(J')$ and according to Proposition~\ref{prop:ker-ext} we have 
$\kExt(\gM(J')_a,\gN(J')_b) \subset \{h'_{j-1} = 0\}$.
If  $\delta = 0$ then $\kExt(\gM(J')_a,\gN(J')_b)$ meets $\{h'_{j-1} = 0\}$
in codimension $1$ instead. In either case $\kExt(\gM(J')_a,\gN(J')_b) \cap \{h'_{j-1} = 0\}$ has dimension $|S^\tau(J')| + \delta - 1 = |S^\tau(J)| - 1$.

The image of the locus $\{h'_{j-1}=0\}$ in $\cZ^{\tau,1}$ will therefore have dimension  $([K:\Q] - 1) - (|S^\tau(J)| - 1)  = [K:\Q] - |S^\tau(J)|$. It follows that $\cZ^{\tau}(J')$ contains a ($[K:\Q] - |S^\tau(J)|$)-dimensional subset of $\cZ^\tau(J)$. But $\cZ^\tau(J)$ is irreducible of dimension $[K:\Q] - |S^\tau(J)|$;\ after checking that this implies $\cZ^\tau(J') \cap \cZ^\tau(J)$ is dense in $\cZ^\tau(J)$,  we conclude that $\cZ^\tau(J) \subset \cZ^{\tau}(J')$.
\end{proof}

\bibliographystyle{math} 
\bibliography{apaw} 
\end{document}